\tikzset{
  on each segment/.style={
    decorate,
    decoration={
      show path construction,
      moveto code={},
      lineto code={
        \path [#1]
        (\tikzinputsegmentfirst) -- (\tikzinputsegmentlast);
      },
      curveto code={
        \path [#1] (\tikzinputsegmentfirst)
        .. controls
        (\tikzinputsegmentsupporta) and (\tikzinputsegmentsupportb)
        ..
        (\tikzinputsegmentlast);
      },
      closepath code={
        \path [#1]
        (\tikzinputsegmentfirst) -- (\tikzinputsegmentlast);
      },
    },
  },
  mid arrow/.style={postaction={decorate,decoration={
        markings,
        mark=at position .5 with {\arrow[#1]{stealth}}
      }}},
}
\definecolor{cadmiumgreen}{rgb}{0.0, 0.42, 0.24}
\theoremstyle{plain}
\newtheorem{thm}{Theorem}[section]
\theoremstyle{definition}
\newtheorem{defn}[thm]{Definition}
\theoremstyle{definition}
\newtheorem{example}[thm]{Example}
\theoremstyle{plain}
\newtheorem{lemma}[thm]{Lemma}
\theoremstyle{plain}
\newtheorem{cor}[thm]{Corollary}
\theoremstyle{remark}
\newtheorem{rem}[thm]{Remark}
\theoremstyle{plain}
\newtheorem{prop}[thm]{Proposition}
\theoremstyle{plain}
\theoremstyle{remark}
\newtheorem{notation}[thm]{Notation}
\theoremstyle{remark}
\newtheorem{convention}[thm]{Convention}
\theoremstyle{remark}
\theoremstyle{plain}
\theoremstyle{definition}
\numberwithin{equation}{section}
\let\bigstarvar\bigstar
\let\bigstar\relax
\DeclareMathOperator*{\bigstar}{\bigstarvar}
\DeclareMathOperator{\Hom}{Hom}
\DeclareMathOperator{\End}{End}
\DeclareMathOperator{\Homs}{\mathscr{H}\text{\kern -3pt {\calligra\large om}}\,}
\DeclareMathOperator{\divv}{div}
\DeclareMathOperator{\CDiv}{CDiv}
\DeclareMathOperator{\Pic}{Pic}
\DeclareMathOperator{\cyc}{cyc}
\DeclareMathOperator{\coker}{coker}
\newcommand{\reg}{{\operatorname{reg}}}
\newcommand{\Aff}{\operatorname{Aff}}
\newcommand{\AJ}{\Phi}
\newcommand{\Jac}{\operatorname{Jac}}
\newcommand{\vf}[1]{\delta_{#1} }
\newcommand{\R}{{\mathds R}}
\newcommand{\Rbar}{{\overline\R}}
\newcommand{\Q}{{\mathds Q}}
\newcommand{\Z}{{\mathds Z}}
\newcommand{\N}{{\mathds N}}
\newcommand{\mL}{{\mathcal L}}
\newcommand{\mM}{{\mathcal M}}
\newcommand{\tW}{{\widetilde W}}
\newcommand\boundaryless{boundaryless}
\begin{document}

\title{Tautological cycles on tropical Jacobians}

\author{Andreas Gross}
\address{Colorado State University \\ Fort Collins, CO 80523}
\email{\href{mailto:andreas.gross@colostate.edu}{andreas.gross@colostate.edu}}

\author{Farbod Shokrieh}
\address{University of Washington \\ Seattle, WA 98195}
\email{\href{mailto:farbod@uw.edu}{farbod@uw.edu}}


\subjclass[2010]{
\href{https://mathscinet.ams.org/msc/msc2010.html?t=14T05}{14T05},
\href{https://mathscinet.ams.org/msc/msc2010.html?t=14H40}{14H40},
\href{https://mathscinet.ams.org/msc/msc2010.html?t=14H42}{14H42},
\href{https://mathscinet.ams.org/msc/msc2010.html?t=14H51}{14H51}
}


\date{\today}

\begin{abstract}
The classical Poincar\'e formula relates the rational homology classes of tautological cycles on a Jacobian to powers of the class of Riemann theta divisor. We prove a tropical analogue of this formula. Along the way, we prove several foundational results about real tori with integral structures (and, therefore, tropical abelian varieties). For example, we prove a tropical version of the Appell-Humbert theorem. We also study various notions of equivalences between tropical cycles and their relation to one another.
\end{abstract}

\maketitle

\setcounter{tocdepth}{1}
\tableofcontents

\section{Introduction}

\renewcommand*{\thethm}{\Alph{thm}}

\subsection{Background}

Let $C$ be a compact Riemann surface of genus $g$. Its Jacobian variety $J$ has a number of natural subvarieties $\tW_d$ for $d \geq 0$, defined up to translation. The origin is denoted by $\tW_0$, the image of the Abel-Jacobi map is denoted by $\tW_1$, and $\tW_d = \tW_{d-1} + \tW_1$ is the image of higher symmetric powers of $C$. One can intersect these subvarieties, add again, pull back or push down under multiplication by integers, and so on. This provides a large supply of algebraic {\em tautological cycles}, which live naturally in $J$.

By the Riemann-Roch or Jacobi inversion theorem, one has $\tW_g = J$. Riemann's theorem states that $\tW_{g-1}$ is a shift of the Riemann theta divisor $\Theta$ (see, e.g., \cite[page 338]{GH78} , \cite[Chapter 1, \S5]{ACGH}, or \cite[Theorem 11.2.4]{bila}). The classical Poincar\'e formula gives a refinement of Riemann's theorem (see, e.g., \cite[page 350]{GH78}, \cite[Chapter 1, \S5]{ACGH}, or \cite[\S11.2]{bila}). It states that for $0 \leq d \leq g$ the classes of $\tW_d$ coincides with $\Theta^{g-d}$ in rational homology (up to the multiplicative constant $1/(g-d)!$). In other words, the subalgebra of tautological cycles in $H_*(J;\Q)$ is generated by the class of Riemann theta divisor.
There are also versions of the Poincar\'e formula over a general field. For example, Lieberman proves `Weil cohomology' statement (see \cite[Remark 2A13]{Kleiman}), and Mattuck proves a `numerical equivalence' statement (see \cite[\S2]{Mattuck}).

\subsection{Our contribution}

Our main goal in this paper is to prove a tropical analogue of the Poincar\'e formula. Let $\Gamma$ be a compact connected metric graph of genus $g$. Following \cite{MZjacobians}, one associates to $\Gamma$ a $g$-dimensional polarized real torus $\Jac(\Gamma)$, called its tropical Jacobian. There is also a well-behaved theory of divisors, ranks, Abel-Jacobi maps, and Picard groups for metric graphs (\cites{MZjacobians,GathmannKerber,BakerNorine}). We denote the tropical Abel-Jacobi morphism by $\Phi \colon \Gamma_d \rightarrow \Jac(\Gamma)$, which is well-defined up to a translation. Here $\Gamma_d$ denotes the set of all unordered $d$-tuples of points of $\Gamma$. The image $\tW_d =\Phi(\Gamma_d)$ is a polyhedral subset of $\Jac(\Gamma)$ of pure dimension $d$. Exactly as in the classical situation $\tW_d$ may be identified with the {\em effective locus} $W_d \subseteq \Pic^d(\Gamma)$ via the Abel-Jacobi map. In \cite{MZjacobians} one also finds the notion of Riemann theta divisor $\Theta$ on $\Jac(\Gamma)$, which is closely related to the theory of Voronoi polytopes of lattices. The polyhedral subsets $\tW_d$ and $\Theta$ of $\Jac(\Gamma)$ support {\em tropical fundamental cycles} $[\tW_d]$ and $[\Theta]$ (see \S\ref{sec:geometric cycles}). 
Recently, the notions of tropical homology, cohomology, and the cycle class map have been developed in \cite{TropHomology} and further studied in \cite{AF1}.

\begin{thm}[= Theorem \ref{thm:Poincare formula} and Corollary \ref{cor:poincare up to numerical equivalence}]
\label{introthm:Poincare formula}

For every $0\leq i\leq g$, we have the equality
\[
[\tW_d]= \frac{[\Theta]^{g-d}}{(g-d)!}
\]
on $\Jac(\Gamma)$ modulo tropical homological equivalence. Moreover, the equality also holds modulo numerical equivalence.
\end{thm}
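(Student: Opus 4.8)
The plan is to establish the homological identity first and deduce numerical equivalence from it, working throughout through the cycle class map of \cite{TropHomology,AF1}. Since this map respects the intersection product, it suffices to verify $[\tW_d]=[\Theta]^{g-d}/(g-d)!$ inside the tropical homology group $H_{d,d}(\Jac(\Gamma))$. The first step is therefore to make this group completely explicit. Because $\Jac(\Gamma)=\R^g/\Lambda$ is a smooth tropical variety --- a real torus with integral structure --- its multi-tangent sheaves are constant and its tropical homology is an exterior algebra on the defining lattice: one has $H_{p,q}(\Jac(\Gamma))\cong \bigwedge^p\Lambda\otimes\bigwedge^q\Lambda$, with the fundamental class, the cap product, and the intersection pairing all given by concrete exterior-algebra operations. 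Under this identification the whole theorem becomes an equality between two explicitly computable families of classes indexed by $d$.

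Next I would pin down $[\Theta]$. The polarization attached to $\Gamma$ is the positive definite form $Q$ on $H_1(\Gamma,\R)$ coming from the edge lengths (see \cite{MZjacobians}), and the tropical Appell--Humbert theorem identifies the divisor class of $\Theta$ with this polarization. Transporting $Q$ through the homology identification above realizes $[\Theta]$ as a distinguished element of $H_{g-1,g-1}(\Jac(\Gamma))$, Poincaré dual to $Q\in\bigwedge^1\Lambda^\ast\otimes\bigwedge^1\Lambda^\ast$. Expanding $[\Theta]^{g-d}/(g-d)!$ in an integral basis is then pure linear algebra: the multinomial factor $(g-d)!$ cancels the overcounting of orderings, leaving a sum, over unordered $(g-d)$-subsets of a basis, of the corresponding principal minors of the Gram matrix of $Q$.

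The heart of the argument is the independent computation of $[\tW_d]$. By definition $\tW_d=\Phi(\Gamma_d)$, so its class is the pushforward $\Phi_\ast[\Gamma_d]$ of the fundamental cycle of the $d$-fold symmetric product under the Abel--Jacobi map. I would compute this pushforward cell by cell: a maximal cell of $\tW_d$ is the image of $d$ edges of $\Gamma$ whose classes are independent in $H_1(\Gamma,\R)$, and the weight $\Phi_\ast$ assigns to it is the lattice index of the image of those directions. Summing these contributions over all $d$-subsets of edges and applying the Cauchy--Binet formula (equivalently, the weighted minor form of the matrix--tree theorem) assembles them into exactly the $(g-d)\times(g-d)$ minors of $Q$ that appeared in the previous step; the passage from ordered to unordered tuples in $\Gamma_d=\Gamma^d/S_d$ supplies the same normalization as the multinomial factor on the $\Theta$ side. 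Matching the two expansions coefficient by coefficient then yields the homological statement.

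The main obstacle I anticipate is precisely this weight computation for $[\tW_d]$. The Abel--Jacobi map from the symmetric product is not injective, and its fibers and local degree vary over the polyhedral structure of $\tW_d$, so extracting the correct integer weight on each maximal cell --- and identifying it, via Cauchy--Binet and the matrix--tree theorem, with a minor of the polarization form \emph{exactly} rather than merely up to a scalar --- is the delicate point on which the comparison rests, and it is where normalization and sign errors would most easily creep in. Once homological equivalence is in hand, the numerical statement of Corollary~\ref{cor:poincare up to numerical equivalence} follows formally: the tropical intersection pairing factors through tropical homology, so homologically equivalent cycles pair identically against all cycles and are therefore numerically equivalent.
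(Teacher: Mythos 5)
Your overall architecture is the same as the paper's: make $H_{*,*}(\Jac(\Gamma))$ explicit as an exterior algebra, identify $c_1(\mL(\Theta))$ with the polarization $Q$ via Appell--Humbert, compute both sides in coordinates, and deduce numerical equivalence from homological equivalence via the compatibility of the cycle class map with intersection products. Two caveats on the framing. First, $H_{p,q}(\Jac(\Gamma))$ is $\bigwedge^q H_1(\Gamma;\Z)\otimes\bigwedge^p\Omega_\Z(\Gamma)^*$, a tensor product of exterior powers of \emph{two different} lattices, not $\bigwedge^p\Lambda\otimes\bigwedge^q\Lambda$; the paper's entire comparison rests on choosing the fundamental circuits $c_k$ in the first factor and the $Q$-dual basis $\vf k$ in the second, which makes the ``Gram matrix'' the identity, so that no minors of $Q$ appear on either side --- both classes become bare sums over $d$-element subsets of $\{1,\dots,g\}$. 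If instead you expand $Q^{g-d}/(g-d)!$ in a single basis you get \emph{all} $(g-d)\times(g-d)$ minors $\det Q_{IJ}$, not just the principal ones, so your bookkeeping on the $\Theta$ side is already off unless you commit to the dual-basis convention. Second, the deduction of numerical equivalence is not formal: $\cyc(A\cdot B)=\cyc(A)\cdot\cyc(B)$ is Theorem \ref{thm:cycle map commutes with intersection product}, whose proof requires replacing $A$ by a general translate (using that translates are algebraically, hence homologically, equivalent on a torus with a spanning curve) to reach transverse position, plus Verdier duality with supports.

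The genuine gap is the one you flag yourself but do not close: the weight computation for $\AJ_{q*}^d[\Gamma^d]$ (equivalently for the symmetric product $\Gamma_d$). Knowing that the tangent directions of a cell $e_{i_1}\times\cdots\times e_{i_d}$ span a rank-$d$ sublattice is not enough; you need (i) that this sublattice is \emph{saturated} in $\Omega_\Z(\Gamma)^*$, which the paper gets from the flow description of $\Omega_\Z(\Gamma)$ (for each $e_k$ with connected complement there is an integral flow with prescribed value on $e_k$ and arbitrary values on the others), and (ii) that over a generic point of $\tW_d$ there is exactly \emph{one} $S_d$-orbit of preimages in $\Gamma^d$ --- i.e.\ a generic effective divisor class of degree $d$ has a unique effective representative. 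Point (ii) is \cite[Lemma 8.1]{semibreak}, and the purity of $W_d$ needed to even speak of a fundamental cycle $[\tW_d]$ is \cite[Theorem 8.3]{semibreak}; neither is elementary, and without them the local degree could exceed $d!$ on some cells. The paper then sidesteps your Cauchy--Binet assembly entirely: once $(\AJ^d_q)_*[\Gamma^d]=d!\,[\tW_d]$ is known, it writes $d!\,[\tW_d]=\bigstar_{k=1}^d[\tW_1]$ and reduces everything to the $d=1$ computation $\cyc[\tW_1]=\sum_k c_k\otimes\vf k$ (which uses $(\AJ_q)_*=\mathrm{id}$ on $H_1$ and the duality $Q(c_k,\vf l)=\delta_{kl}$), letting the Pontryagin product do the combinatorics that you propose to handle with the matrix--tree theorem. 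Your route could in principle be completed, but the two inputs above must be supplied, and the minor/normalization matching you defer is precisely where the content lies.
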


Our proof further provides explicit descriptions of the classes of $\tW_d$ and $\Theta^{g-d}$ in tropical homology in terms of the combinatorics of the metric graph $\Gamma$ (see \S\ref{subsec:cycle classes of tautological cycles} and \S\ref{subsec:powers of theta}).

 The Poincar\'e formula has several interesting, but immediate, consequences.

\begin{cor}[= Corollaries \ref{cor:Riemman's Theorem}, \ref{cor:degree of tWd times tWg-d}, and \ref{cor:Theta to the
g}]
\label{introcor:consequences}

\begin{enumerate}
\item[]
\item[(a)] There exists a unique $\mu\in \Pic^{g-1}(\Gamma)$ such that $[W_{g-1}]=[\Theta]+\mu \ . $

\item [(b)] The effective tropical $0$-cycle obtained from the stable intersection of $[\tW_d]$ and $[\tW_{g-d}]$ has degree $g \choose d$.

\item [(c)] The tropical $0$-cycle $[\Theta]^g$ has degree $g!$.

\end{enumerate}

\end{cor}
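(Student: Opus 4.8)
The plan is to derive all three statements directly from the Poincar\'e formula (Theorem~\ref{introthm:Poincare formula}), used in the form $[\tW_d] = [\Theta]^{g-d}/(g-d)!$ modulo numerical equivalence, together with two structural facts from the preceding sections: that the stable intersection product descends to the product on numerical (equivalently tropical homology) classes, and that the degree of a $0$-cycle is a numerical invariant. The natural logical order is the reverse of the one listed: I would establish (c) first, deduce (b) from it, and treat (a) last, since (a) alone requires an extra ingredient beyond the Poincar\'e formula.

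For (c), apply the Poincar\'e formula with $d = 0$. Since $\Gamma_0$ is a single (empty) tuple, $\tW_0 = \Phi(\Gamma_0)$ is a single reduced point, so its fundamental $0$-cycle has degree $1$. The formula reads $[\tW_0] = [\Theta]^g/g!$, whence $[\Theta]^g = g!\,[\tW_0]$ modulo numerical equivalence. Taking degrees and using $\deg[\tW_0] = 1$ gives $\deg[\Theta]^g = g!$.

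For (b), the stable intersection of the effective cycles $[\tW_d]$ and $[\tW_{g-d}]$ is again an effective $0$-cycle, and its degree is computed on numerical classes. Using the Poincar\'e formula for both factors together with multiplicativity of the intersection product,
\[
[\tW_d]\cdot[\tW_{g-d}] = \frac{[\Theta]^{g-d}}{(g-d)!}\cdot\frac{[\Theta]^{d}}{d!} = \frac{[\Theta]^{g}}{(g-d)!\,d!}
\]
modulo numerical equivalence. Taking degrees and substituting $\deg[\Theta]^g = g!$ from (c) yields $g!/\bigl((g-d)!\,d!\bigr) = \binom{g}{d}$.

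Part (a) is where I expect the real difficulty, because the Poincar\'e formula with $d = g-1$ only delivers the \emph{homological} equality $[\tW_{g-1}] = [\Theta]$, which is strictly weaker than the asserted equality of cycles up to a single translation. To upgrade homological equivalence to an actual translation I would invoke the structural classification of divisors on a tropical torus (the tropical Appell--Humbert theorem established earlier): homologically equivalent divisors carry the same associated polarization form, and for the principal polarization the effective divisor representing a given class is unique up to translation --- the tropical counterpart of $h^0(\Theta) = 1$. This forces $W_{g-1}$ to be a translate $\Theta + \mu$ for some $\mu \in \Pic^{g-1}(\Gamma)$. Uniqueness of $\mu$ then follows from the non-degeneracy of the principal polarization: any two such $\mu$ differ by an element of the translation stabilizer of $\Theta$, which is trivial. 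The crux of the argument, carrying all the weight, is thus precisely this passage from homological to translation equivalence, rather than any computation.
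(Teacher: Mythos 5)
Your proposal is correct and follows essentially the same route as the paper: parts (b) and (c) are obtained exactly as in Corollaries \ref{cor:degree of tWd times tWg-d} and \ref{cor:Theta to the g} by combining the Poincar\'e formula with the compatibility of the cycle class map with intersection products and degrees, and part (a) is proved as in Corollary \ref{cor:Riemman's Theorem}, where the passage from homological equivalence to a unique translation is exactly the content of Proposition \ref{prop:effective divisors with positive non-degenerate Chern class} (resting on the Appell--Humbert theorem and the uniqueness of the concave section of a principally polarizing line bundle). You correctly identify that this last step, not the Poincar\'e formula itself, carries the weight in (a).
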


We note that part (a) is a tropical version of Riemann's Theorem and has already been proven by Mikhalkin and Zharkov \cite{MZjacobians} using other combinatorial techniques.
The special case $d=1$ of part (b) can also be found in \cite{MZjacobians} in the context of the Jacobi inversion theorem, where again the proof is direct and combinatorial. This was essential in the development of break divisors in their paper. Part (c) classically follows from the geometric Riemann-Roch theorem for abelian varieties (see, e.g., \cite[Theorem 3.6.3]{bila}).

Building up to the proof of the Poincar\'e formula we also prove several foundational results about real tori with integral structures (and, therefore, about tropical abelian varieties) some of which had been used implicitly in previous work on the subject. Most notably, we prove the following tropical version of the Appell--Humbert Theorem:

\begin{thm}[= Theorem \ref{thm:Appell-Humbert}]

\label{introthm:Appell-Humbert}

Every tropical line bundle on a real torus $N_\R/\Lambda$ corresponds to a pair $(E,l)$ of a symmetric form $E$ on $N_\R$ with $E(N,\Lambda)\subseteq \Z$ and a morphism $l\in \Hom(N_\R,\R)$. Two such pairs $(E,l)$ and $(E',l')$ define the same line bundle if and only if $E=E'$ and $(l-l')(N)\subseteq \Z$.
\end{thm}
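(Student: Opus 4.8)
The plan is to describe tropical line bundles on $X = N_\R/\Lambda$ by automorphy factors and to bring every such factor into a canonical Appell--Humbert form. Writing $M = \Hom(N,\Z)$ for the dual lattice, recall that a tropical line bundle is the same datum as a $\Lambda$-equivariant line bundle on the contractible universal cover $N_\R$, i.e. a family $(a_\lambda)_{\lambda\in\Lambda}$ of integral affine functions $a_\lambda\colon N_\R\to\R$ subject to the cocycle condition $a_{\lambda+\mu}(v)=a_\lambda(v+\mu)+a_\mu(v)$, taken modulo the coboundaries $a_\lambda(v)=b(v+\lambda)-b(v)$ of a global integral affine function $b$ on $N_\R$. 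Given a pair $(E,l)$ as in the statement, I would first define the map $\Psi$ sending $(E,l)$ to the class of the automorphy factor
\[
a_\lambda(v)=E(v,\lambda)+\tfrac12 E(\lambda,\lambda)+l(\lambda).
\]
A direct computation shows that the cocycle condition holds precisely because $E$ is symmetric (the discrepancy between the two sides is $E(\lambda,\mu)-E(\mu,\lambda)$), while the hypothesis $E(N,\Lambda)\subseteq\Z$ guarantees that the linear part $v\mapsto E(v,\lambda)$ of $a_\lambda$ lies in $M$, so that $a_\lambda$ is genuinely an integral affine function. This makes $\Psi$ a well-defined homomorphism from pairs to $\Pic(X)$.

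For surjectivity I would start from an arbitrary automorphy factor and put it in normal form. Decompose $a_\lambda(v)=\langle \ell(\lambda),v\rangle+c(\lambda)$ with $\ell(\lambda)\in M$ and $c(\lambda)\in\R$. Comparing linear parts in the cocycle condition shows that $\ell\colon\Lambda\to M$ is a homomorphism, and comparing constants gives $c(\lambda+\mu)-c(\lambda)-c(\mu)=\langle\ell(\lambda),\mu\rangle$. The key point is that the left-hand side is symmetric in $\lambda,\mu$ because $\Lambda$ is abelian, which forces $\langle\ell(\lambda),\mu\rangle=\langle\ell(\mu),\lambda\rangle$; hence $(\lambda,\mu)\mapsto\langle\ell(\lambda),\mu\rangle$ is a symmetric $\Z$-valued form on $\Lambda$ and extends uniquely to a symmetric $\R$-bilinear form $E$ on $N_\R=\Lambda\otimes\R$ with $E(N,\Lambda)\subseteq\Z$. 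Since $c_0(\lambda):=\tfrac12 E(\lambda,\lambda)$ solves the same inhomogeneous equation, the difference $c-c_0$ is a homomorphism $\Lambda\to\R$, which extends uniquely to some $l\in\Hom(N_\R,\R)$. Then $a_\lambda=E(\cdot,\lambda)+\tfrac12 E(\lambda,\lambda)+l(\lambda)$, so the given bundle equals $\Psi(E,l)$.

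Finally I would determine the kernel of $\Psi$, i.e. when two pairs define the same bundle. Because $\Psi$ is additive, it suffices to decide when $\Psi(E,l)$ is trivial. A coboundary $b(v+\lambda)-b(v)$ of an integral affine $b=\langle m,\cdot\rangle+\mathrm{const}$ with $m\in M$ equals the constant $\langle m,\lambda\rangle$, independent of $v$; so an automorphy factor with nonzero linear part is never a coboundary. Thus triviality of $\Psi(E,l)$ forces the linear part $E(\cdot,\lambda)$ to vanish for all $\lambda$, hence $E=0$ (as $N$ and $\Lambda$ span $N_\R$), and then reduces to $l(\lambda)=\langle m,\lambda\rangle$ for some $m\in M$, i.e. $l\in M$, i.e. $l(N)\subseteq\Z$. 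Applying this to $\Psi(E,l)-\Psi(E',l')=\Psi(E-E',l-l')$ yields exactly the stated criterion: $(E,l)$ and $(E',l')$ agree if and only if $E=E'$ and $(l-l')(N)\subseteq\Z$. The main obstacle is the foundational identification in the first step---that tropical line bundles on the quotient are exactly these automorphy factors modulo coboundaries---which rests on the vanishing of the relevant cohomology of the contractible cover $N_\R$; once this is in place, the symmetry of $E$ and the remaining claims follow from the elementary cocycle bookkeeping above.
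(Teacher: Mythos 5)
Your argument is correct, and its skeleton --- identifying line bundles on $N_\R/\Lambda$ with tropical factors of automorphy modulo coboundaries, extracting the $v$-linear part of $a_\lambda$ to produce $E$ and the residual constant to produce $l$ --- is the same as the paper's (\S\ref{subsec:factors of automorphy}--\ref{subsec:Appell-Humbert} and the proof of Theorem \ref{thm:Appell-Humbert}; your ``comparing linear parts'' is the paper's ``differentiating the cocycle condition''). Where you genuinely diverge is the uniqueness step. The paper deduces $E=E'$ by invoking Proposition \ref{prop:chern class recovers symmetric form}, i.e.\ the computation $c_1(\mL(E,l))=E$ carried out via a \v{C}ech--singular double complex; you instead observe that a coboundary $b(v+\lambda)-b(v)$ of a global integral affine $b$ is constant in $v$, so equality of bundles forces the $v$-linear parts of the two factors to agree, whence $E=E'$ because $\Lambda$ spans $N_\R$. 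Your route is more elementary and keeps the whole theorem self-contained at the level of cocycle bookkeeping; the paper's route costs more up front, but the Chern-class computation is needed independently (for the theta divisor in \S\ref{subsec:Theta divisor}, for Proposition \ref{prop:translations of line bundles}, and to make Convention \ref{conv:convention on H11 and bilinear forms} usable), so nothing is wasted there, and it yields the stronger conceptual statement that $E$ is an isomorphism invariant visible in $H^{1,1}$. Two harmless imprecisions: the form $(\lambda,\mu)\mapsto\langle\ell(\lambda),\mu\rangle$ is $\Z$-valued on $N\times\Lambda$ (since $\ell(\lambda)\in M$), not on $\Lambda\times\Lambda$, which is exactly the integrality the statement asks for; and the cocycle discrepancy for your $\Psi(E,l)$ is $\tfrac12\bigl(E(\lambda,\mu)-E(\mu,\lambda)\bigr)$ rather than $E(\lambda,\mu)-E(\mu,\lambda)$, which changes nothing.
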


We also study the relationship between various notions of equivalence of tropical cycles. For example, we prove the following statement.

\begin{thm}[= Propositions \ref{prop:algebraic equivalence implies homological equivalence} and \ref{prop:homological equivalence implies numerical equivalence}]
\label{thm:equivalences}
Algebraic equivalence implies homological equivalence, and homological equivalence implies numerical equivalence on real tori admitting a `spanning curve'.
\end{thm}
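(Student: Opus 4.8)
The plan is to treat the two implications separately, in both cases leaning on the functoriality of the tropical cycle class map $Z\mapsto[Z]$ recalled above (its compatibility with proper pushforward, generic pullback, and stable intersection) together with the explicit exterior-algebra description of tropical homology on a real torus $X=N_\R/\Lambda$. First I would fix the definition of algebraic equivalence: $Z_0$ and $Z_1$ are algebraically equivalent if there is a tropical curve $C$, a cycle $W$ on $X\times C$ flat over $C$, and points $p_0,p_1\in C$ with fibres $W_{p_0}=Z_0$ and $W_{p_1}=Z_1$. To deduce $[Z_0]=[Z_1]$ in tropical homology I would show that the fibre class $[W_p]$ is independent of $p$. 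The key observation is that on the connected curve $C$ the two point classes $[p_0]$ and $[p_1]$ agree, both generating $H_{0,0}(C)\cong\Q$; via a Künneth-type decomposition of the homology of $X\times C$ this forces the fibre classes $[X\times\{p_0\}]$ and $[X\times\{p_1\}]$ to coincide. Intersecting with $[W]$ and pushing forward along the projection $\pr_X$ then gives $[Z_i]=\pr_{X*}\bigl([W]\cdot[X\times\{p_i\}]\bigr)$, and the two values agree by the compatibility of the cycle class map with stable intersection and pushforward together with the projection formula.

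For the second implication I would first record that the degree of a $0$-cycle factors through its homology class under the canonical identification $H_{0,0}(X;\Q)\cong\Q$. The heart of the matter is then the comparison identity $\deg(Z\cdot Y)=\langle[Z],[Y]\rangle$, relating the stable-intersection degree of two complementary cycles to the homological intersection pairing. Granting this, if $[Z_0]=[Z_1]$ then for every complementary cycle $Y$ one gets $\deg(Z_0\cdot Y)=\langle[Z_0],[Y]\rangle=\langle[Z_1],[Y]\rangle=\deg(Z_1\cdot Y)$, which is exactly the assertion that $Z_0$ and $Z_1$ are numerically equivalent.

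The \emph{spanning curve} enters precisely in establishing the comparison $\deg(Z\cdot Y)=\langle[Z],[Y]\rangle$. The homological pairing is computed in the exterior-algebra model of the tropical (co)homology of the torus, whereas the left-hand side is a geometric stable intersection; to match them I would move the cycles into generic position using the translation action of $N_\R/\Lambda$ on itself. A spanning curve — one whose primitive tangent directions span $N_\R$ — guarantees enough translation directions, and enough geometric test cycles obtained from the curve and its self-products, to reduce an arbitrary pairing to transverse intersections of coordinate-type cycles, where both sides are visibly equal and bilinearity in $[Y]$ then extends the identity to all $Y$.

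I expect this comparison to be the main obstacle. Stable intersection is only generically transverse, so turning the translation/transversality heuristic into a proof requires controlling the behaviour of $[Z]\cdot[Y]$ under small perturbations and invoking the full compatibility of the cycle class map with stable intersection on the (smooth) torus — this is where the spanning hypothesis does its work, by supplying the directions along which a generic translate of $Y$ meets $Z$ transversally. By contrast, the first implication is essentially formal once the functoriality of the cycle class map and the connectedness-of-$C$ step are in place, so I would expect the write-up there to be short and the effort to concentrate on the numerical comparison.
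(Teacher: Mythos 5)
Your proposal follows the paper's strategy in both halves: the first implication is proved exactly as you describe, from the fact that the degree-zero point difference is homologically trivial on the curve together with the functoriality of the cycle class map (Theorem~\ref{thm:cycle class map commutes with operations}), and the second reduces to the identity $\cyc(A\cdot B)=\cyc(A)\cdot\cyc(B)$ (Theorem~\ref{thm:cycle map commutes with intersection product}), which the paper likewise establishes by using the spanning curve to replace $A$ by a general translate (Propositions~\ref{prop:translate is algebraically equivalent} and~\ref{prop:algebraic equivalence implies homological equivalence}) so that the intersection becomes transverse. The only substantive divergence is in the step you flag as the main obstacle: once transversality is achieved the paper does not manufacture test cycles from the spanning curve, but localizes via Borel--Moore homology with supports and Verdier duality to reduce to linear subspaces of $\R^n$ factored as intersections of hyperplanes, where compatibility with divisor intersections gives the identity.
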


\subsection{Further directions}

We believe our Poincar\'e formula is a first step in proving the following ambitious conjecture in tropical Brill-Noether theory. Let $W^r_d \subseteq \Pic^d(\Gamma)$ denote the locus of divisor classes of {\em degree} $d$ and {\em rank} at least $r$ (see, e.g., \cite{CDPR, LPN}).

\medskip

\noindent {\bf Conjecture.}
Assume $\rho = g - (r+1)(g-d+r) \geq 0$. Then there exists a canonical tropical subvariety $Z^r_d \subseteq W^r_d$ of pure dimension $\rho$ such that 
\[
[Z^r_d] = \left(\prod_{i=0}^r \frac{i!}{(g-d+r+i)!}\right) [\Theta]^{g-\rho} \ .
\]
modulo tropical homological equivalence.

\medskip

Note that our Theorem~\ref{introthm:Poincare formula} precisely establishes this conjecture in the case $r=0$, in which case $ W^0_d = W_d $ is pure-dimensional by \cite[Theorem 8.3]{semibreak} (see also Theorem~\ref{thm:pure dimensionality}) and $Z^0_d = W_d$. We also remark that a less precise version of this conjecture is posed as a question in \cite[Question 6.2]{Pflueger}.

As stated above, it follows from the Poincar\'e formula that the subring of tautological cycles in rational homology is too simple to provide interesting invariants. A celebrated result of Ceresa \cite{Ceresa} implies that for a generic curve $C$, the class of $W_d$ is {\em not} proportional to the class of $\Theta$ modulo algebraic equivalence. Beauville in \cite{Beauville} (see also \cite{Polishchuk, Marini, Moonen}) has studied results about algebraic equivalence. We believe that the tautological subring of the ring of tropical cycles modulo algebraic equivalence is an interesting object to study. For example, one might hope that this ring is generated by the classes of the $W_d$'s for $1 \leq d \leq g-1$. We remark that a tropical version of Ceresa's result has already been established by Zharkov in \cite{ZharkovMinusC}.

As stated in Theorem \ref{thm:equivalences}, homological equivalence implies numerical equivalence on tropical abelian varieties. We expect this to be true in general on any tropical manifold. 

 In analogy with Grothendieck's `standard conjecture D' one might also hope that homological equivalence coincides with numerical equivalence, at least in the case of tropical abelian varieties. The analogous classical result has been established by Lieberman in \cite{Lieberman}.

\subsection{The structure of this paper}

In \S\S\ref{sec:rational polyhedral spaces}--\ref{sec:tropical Jacobians} we review the main objects and tools needed to proof the Poincar\'e formula, including rational polyhedral spaces, tropical cycles, tropical homology, and tropical Jacobians.

In \S\S\ref{sec:notions of equivalence}--\ref{sec:line bundles and Appell-Humbert}  we study tropical cycles, tropical homology, and line bundles on real tori. Our results here are of a more foundational nature, and include the Appell-Humbert Theorem. We also study various  notions of equivalences of tropical cycles and prove Theorem \ref{thm:equivalences}.

Finally, in \S\S\ref{sec:geometric cycles}--\ref{sec:the formula} we prove the Poincar\'e formula.
In \S\ref{sec:geometric cycles} we show that the set $\tW_i$ has a fundamental cycle. In \S\ref{sec:the formula} we give explicit expression for both the cycle classes of the $[\tW_i]$ and of powers of the theta divisor. Comparing these expression will finish the proof of Theorem \ref{introthm:Poincare formula}. The results summarized in Corollary \ref{introcor:consequences} will be direct consequences of the Poincar\'e formula.

\subsection*{Acknowledgements}
AG was supported by the ERC Starting Grant MOTZETA (project 306610) of the European Research Council (PI: Johannes Nicaise) during parts of this project.

\renewcommand*{\thethm}{\arabic{section}.\arabic{thm}}

\subsection*{Notation}
We will denote by $\N$ the natural numbers including $0$. For an Abelian group $A$ and a topological space $X$, we will denote by $A_X$ the constant sheaf on $X$ associated to $A$.

\section{Rational polyhedral spaces}
\label{sec:rational polyhedral spaces}

The tropical spaces studied in this paper are real tori with integral structures, compact tropical curves, and their Jacobians. They all live inside the category of \boundaryless{} rational polyhedral spaces. We quickly review their definition and refer to \cite{MZeigenwave,Lefschetz,AF1} for more details.

\subsection{\boundaryless{} rational polyhedral spaces}

A rational polyhedral set in $\R^n$ is a finite union of finite intersections of sets of the form 
\[
\{x\in \R^n\mid \langle m, x\rangle\leq a\} \ ,
\]
where $m\in (\Z^n)^*$, $a\in \R$, and $\langle\cdot,\cdot\rangle$ denotes the evaluation pairing.
Any such set $P$ comes with a sheaf $\Aff_P$ of \emph{integral affine} functions, which are precisely the continuous real-valued functions that are locally (on $P$) of the form $x\mapsto \langle m, x\rangle +a$ for some $m\in (\Z^n)^*$ and $a\in \R$. 

\begin{defn}
A \emph{\boundaryless{} rational polyhedral space} is a pair $(X,\Aff_X)$ consisting of a topological space $X$ and a sheaf of continuous real-valued functions $\Aff_X$ such that every point $x\in X$ has an open neighborhood $U$ such that there exists a rational polyhedral set $P$ in some $\R^n$, an open subset $V\subseteq P$, and a homeomorphism $f\colon U \to V$ that induces an isomorphism $f^{-1}(\Aff_P|_V)\cong \Aff_X|_U $ via pulling back functions. Such an isomorphism $f$ is called a \emph{chart} for $X$. A \boundaryless{} rational polyhedral space that is compact is called a \emph{closed rational polyhedral space}. The sections of $\Aff_X$ are called \emph{integral affine functions}.
\end{defn}

\begin{rem}
\label{rem:spaces with boundary}
In the literature (for example in \cite{Lefschetz, AF1}), the notion of rational polyhedral spaces is used for spaces that are locally isomorphic to open subsets of rational polyhedral sets in $\Rbar^n$, where $\Rbar=\R\cup\{\infty\}$. This introduces a notion of boundary, which is essential for many applications. For our purposes it is sufficient to consider spaces without boundary. A \boundaryless{} rational polyhedral space is precisely a rational polyhedral space without boundary.
\end{rem}

\begin{defn} \phantomsection
\begin{itemize}
\item[]
\item[(i)] A morphism of \boundaryless{} rational polyhedral spaces is a continuous map $f\colon X\to Y$ such that pullbacks of functions in $\Aff_Y$ are in $\Aff_X$. 
\item[(ii)] A morphism $f\colon X\to Y$ is called proper if it is a proper map of topological spaces, that is preimages of compact sets are compact. 
\end{itemize}
\end{defn}

\subsection{Real tori with integral structures}
Let $N$ be a lattice, and let $\Lambda\subseteq N_\R=N\otimes_\Z\R$ be a second lattice of full rank, that is such that the induced morphism $\Lambda_\R\to N_\R$ is an isomorphism. Clearly, $N_\R$ gets a well-defined rational polyhedral structure from any isomorphism $N\cong \Z^n$. The \emph{real torus (with integral structure)} associated to $N$ and $\Lambda$ is the quotient $X=N_\R/\Lambda$, with the sheaf of affine functions being the one induced by $N_\R$. More precisely, if $\pi\colon N_\R\to X$ denotes the quotient map, and $U\subseteq X$ is open, then $\phi\colon U\to \R$ is in $\Aff_{X}(U)$ if and only if $\phi\circ \pi\in \Aff_{N_\R}(\pi^{-1}U)$. Note that the integral affine structure on $X$ is induced by $N$ and \emph{not} by $\Lambda$.

The group law on a real torus $X$ makes it a group object in the category of \boundaryless{} rational polyhedral spaces. In particular, every $x\in X$ defines an automorphism via translation.
\begin{defn}
Let $X$ be a real torus and let $x\in X$. Then the \emph{translation by $x$} is the morphism
\[
t_x\colon X\to X, \;\;y\mapsto x+y \ .
\]
\end{defn}

\subsection{Tropical curves}\label{subsec:tropical curves} A \emph{tropical curve} is a purely $1$-dimensional \boundaryless{} rational polyhedral space. With this definition, the underlying space of a tropical curve $\Gamma$ is a topological graph. In particular, it has a finite set of vertices (branch points) $V(\Gamma)$ where $\Gamma$ does not locally look like an open interval in $\R$, and a set of open edges $E(\Gamma)$, which are the connected components of $\Gamma\setminus V(\Gamma)$. The closed edges of $\Gamma$ are the closures of its open edges and an open edge segment is a connected open subset of an open edge. A tropical curve is \emph{smooth} (see Figure \ref{fig:singular and smooth curve}) if every point has a neighborhood that is isomorphic to a neighborhood of the origin in a \emph{star-shaped set}, that is a set of the form 
\[
\bigcup_{0\leq i\leq n} \R_{\geq 0}e_i \subseteq \R^{n+1}/\R \mathbf 1 \ .
\]
Here $n>0$ will denote the valency of the point, we denote by $\mathbf 1$ the vector whose coordinates are all $1$, and $e_i$ denotes the $i$-th standard basis vector.

Using the integral structure on a compact tropical curve, one can assign lengths to its edges, thus defining a metric graph. Conversely, given a metric graph (a topological graph $\Gamma$ equipped with an inner metric), one can define $\Aff_\Gamma$ as the sheaf of \emph{harmonic} functions on $\Gamma$, that is the sheaf of functions whose sum of incoming slopes is $0$ at every point. In this way, one obtains a smooth tropical curve $(\Gamma,\Aff_\Gamma)$ (cf.\ \cite[Proposition 3.6]{MZjacobians}).

The \emph{genus} $g$ of a tropical curve $\Gamma$ is defined as its first Betti number, that is $g=\dim_\R H_1(\Gamma;\R)$.

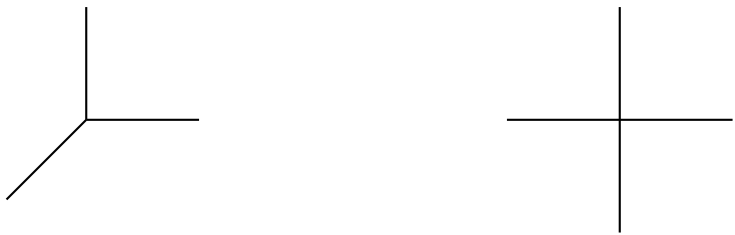
\begin{figure}
\centering
\begin{minipage}{.45\textwidth}
\centering
\begin{tikzpicture}[thick]
\clip (0,0) circle (1.5cm);
\draw (0,0)--(5,0);
\draw (0,0)--(0,5);
\draw (0,0)--(-5,-5);
\end{tikzpicture}
\end{minipage}
\quad
\begin{minipage}{.45\textwidth}
\centering
\begin{tikzpicture}[thick]
\clip (0,0) circle (1.5cm);
\draw (-5,0)--(5,0);
\draw (0,-5)--(0,5);
\end{tikzpicture}
\end{minipage}
\caption{Two tropical curves embedded in $\R^2$. The one to the left is smooth, the one to the right is not.}
\label{fig:singular and smooth curve}
\end{figure}

\begin{rem}
\label{rem:curves with boundary}
With our notion of tropical curves, the underlying topological graph is not allowed to have $1$-valent vertices. This can be resolved by working in the larger category of polyhedral spaces with boundary mentioned in Remark \ref{rem:spaces with boundary} and allowing neighborhoods of $\infty$ in $\Rbar$ as local models for the curves. In this way, tropical curves could have edges of infinite length that end in a $1$-valent vertex. But as we will note in Remark \ref{rem:reduction to boundaryless case}, the results of this paper are easily generalized to apply to compact and connected smooth tropical curves with boundary as well.
\end{rem}

\begin{example}
For any positive real number $j\in \R_{>0}$ the sublattice $\Z j$ of $\R=\Z_\R$ has full rank. Therefore, the quotient $\Gamma=\R/\Z j$, endowed with the integral affine structure induced by $\Z$, is a $1$-dimensional real torus. It is also a smooth tropical curve of genus $1$. Its unique edge is both open and closed and it is homeomorphic to the $1$-sphere. The length of this edge is given by $j$, which can be considered as the $j$-invariant of $\Gamma$ \cite{jinvariant}.
\end{example}

\begin{example}
\label{example:metric graph}
Consider the topological space $\Gamma$ obtained by gluing three intervals $[0,a]$, $[0,b]$, and $[0,c$] along their lower and upper bounds, respectively. Clearly, $\Gamma$ is a topological graph with three edges and two vertices. We can view the three intervals as rational polyhedral spaces, so on the interior of the edges of $\Gamma$ we have a notion of linearity. We can now define $\Aff_\Gamma$ as the sheaf of all continuous functions whose restrictions to the interiors of the intervals are linear, and such that the sum of the outgoing slopes is $0$ at the two vertices. With these choices, $\Gamma$ is the smooth tropical curve associated to the metric graph with three parallel edges of lengths $a$, $b$ and $c$. It is depicted in Figure \ref{fig:tropical curve}.
\end{example}

\begin{figure}[b]
\begin{tikzpicture}[font=\footnotesize,
dot/.style= {circle, fill=black, inner sep=.05cm},thick
]
\node [dot](upper) at (0,0) {};
\node [dot] (lower) at (2,0) {};
\coordinate(upperhelp) at (1,1);
\coordinate(lowerhelp)  at (1,-1);

\draw (upper)--node[pos=.5,auto] {$b$}(lower);
\draw  (upper) to [out=-120, in=-180] node[pos=1,auto,swap] {$c$} node [pos=.5](middle){} (lowerhelp);
\draw (lowerhelp) to [out=0, in=-60] (lower);
\draw  (upper) to [out=120, in=-180]  node[pos=1,auto] {$a$} (upperhelp);
\draw (upperhelp) to [out=0, in=60] (lower);

\draw [dashed] (lower) circle (.4cm);
\draw [dashed] (middle) circle (.4cm);

\begin{scope}[xshift=3.5cm]
\draw [dashed] (0,0) circle (0.4cm);
\clip (0,0) circle (0.4cm);
\node [dot] (vertex) at (0,0){};
\draw (0,0) --(-1,-1);
\draw (0,0)--(1,0);
\draw (0,0)--(0,1); 
\end{scope}

\begin{scope}[xshift=-.5cm,yshift=-1.7cm]
\coordinate (vertex2) at (0,0) {};
\draw [dashed] (0,0) circle (0.4cm);
\clip (0,0) circle (0.4cm);
\draw (-1,0)--(1,0);
\end{scope}

\draw (2.5,0)--  (3,0);
\draw ($(vertex2)!0.4!(middle)$) -- ($(vertex2)!0.6!(middle)$);
\end{tikzpicture}
\caption{A tropical curve of genus $2$ with its local charts and edge lengths.}
\label{fig:tropical curve}
\end{figure}
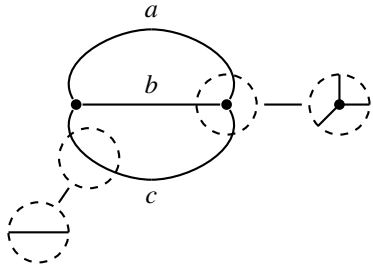

\subsection{Tropical manifolds}

We recall that every loop-free matroid $M$ on a ground set $E(M)$ has an associated tropical linear space $L_M$, which is a rational polyhedral set in $\R^{E(M)}/\R\mathbf 1 $. We will only consider very special linear spaces and therefore refrain from recalling their precise definition. For our purposes, it suffices to say that $\R^n$ is a tropical linear space for any $n$, and the $1$-dimensional tropical linear spaces are precisely the star-shaped sets appearing in the definition of smooth tropical curves in \S\ref{subsec:tropical curves}.

\begin{defn}
\label{def:tropical manifolds}
A \boundaryless{} rational polyhedral space $X$ is called a \emph{\boundaryless{} tropical manifold} if it can be covered by charts
\[
X\supseteq U\xrightarrow{\cong} V\subseteq L \ ,
\]
where $U$ is an open subset of $X$ and $V$ is an open subset of a tropical linear space $L$.
\end{defn}

Since both $\R^n$ and star-shaped sets are tropical linear spaces, it follows that real tori and smooth tropical curves are \boundaryless{} tropical manifolds.

\subsection{The cotangent sheaf}
\begin{defn} 
Let $X$ be a \boundaryless{} rational polyhedral space.
\begin{itemize}
\item[(i)] The quotient $\Aff_X/\R_X$ is called the \emph{cotangent sheaf} and is denoted by $\Omega^1_X$. 
\item[(ii)] The \emph{integral tangent space} at a point $x\in X$ is defined as $T^\Z_x X= \Hom(\Omega_{X,x},\Z)$. 
\item[(iii)] The \emph{tangent space} at a point $x\in X$ is defined as $T_x X= (T^\Z_x X)_\R\cong \Hom(\Omega_{X,x},\R)$. 
\end{itemize}
\end{defn}

\begin{example}
Let $X=N_\R/\Lambda$ be a real torus. Then $\Aff_X$ has no non-constant global sections because there is no globally defined non-constant integral affine function on $N_\R$ that is $\Lambda$-periodic. On the other hand, the quotient $\Aff_X/\R_X=\Omega^1_X$ is isomorphic to the constant sheaf $N_X$.
\end{example}

By definition, a morphism of \boundaryless{} rational polyhedral spaces $f\colon X\to Y$ induces a morphism $f^{-1}\Omega^1_Y\to \Omega^1_X$. Taking stalks and dualizing induces morphisms on tangent spaces $d_x f\colon T_x X \to T_{f(x)} Y$ for all $x\in X$ that map the integral tangent spaces on $X$ to the integral tangent spaces on $Y$.

\section{Tropical cycles and their tropical cycle classes}
\label{sec:tropical cycles}

We briefly recall the definitions of tropical cycles, tropical (co)homology, and the tropical cycle class map connecting the two. We closely follow \cite{AllerRau,FRIntersection,ShawIntersection} regarding tropical cycles and \cite{TropHomology, MZeigenwave, Lefschetz, AF1} regarding tropical (co)homology and the tropical cycle class map.

\subsection{Tropical cycles} \label{subsec:tropical cycles}

For a  \boundaryless{} rational polyhedral space $X$, let us denote by $X^\reg$ its open subset of points $x\in X$ that have a neighborhood isomorphic (as \boundaryless{} rational polyhedral spaces) to an open subset of $\R^n$ for some $n\in \N$. A tropical $k$-cycle is a function $A\colon X\to \Z$ such that its support $|A|= \overline{\{x\in X\mid A(x)\neq 0\}}$ is either empty or a purely $k$-dimensional polyhedral subset of $X$, $A$ is nonzero precisely on the set $|A|^\reg$, on which it is locally constant, and it satisfies the so-called \emph{balancing condition}. The latter is a local condition that is well-known for $X=\R^n$, to which the general case can be reduced. As we will only need it implicitly, we refer to \cite{AllerRau} for details. The sum of two tropical $k$-cycles on $X$, considered as a sum of $\Z$-valued functions, is not a tropical $k$-cycle again in general. However, there exists a unique tropical $k$-cycle on $X$ that agrees with the sum on the complement of an at most $(k-1)$-dimensional polyhedral subset of $X$. This makes the set $Z_k(X)$ into an Abelian group. A tropical cycle $A$ is said to be \emph{effective} if it is everywhere nonnegative.

If $f\colon X\to Y$ is a proper morphism of \boundaryless{} rational polyhedral spaces, it induces a \emph{push-forward} $f_* \colon Z_k(X)\to Z_k(Y)$ of tropical cycles. If $A\in Z_k(X)$ is a tropical cycle, then $f_*A$ will be zero outside of the subset $(f|A|)_k \subseteq f|A|$ where the local dimension of $f|A|$ is $k$. There exists a dense open subset $U\subseteq (f|A|)_k$ such that for each $y\in U$ the fiber $f^{-1}\{y\}$ is finite and contained in $|A|^\reg$, and for each such $y\in U$ we have
\[
f_*A(y)=\sum_{x\in f^{-1}\{y\}} |\coker d_x f| \ .
\]
Note that the finiteness of $\coker d_x f$ follows from the finiteness of the fiber over $y$. If $X$ is compact then one can take $Y$ to be a point. Identifying the tropical $0$-cycles on a point with $\Z$, the push-forward then defines a morphism $Z_0(X)\to \Z$. The image of a tropical $0$-cycle $A$ under this morphism is called the \emph{degree} of $A$, and it is denoted by $\int_X A$.

If $X$ and $Y$ are \boundaryless{} rational polyhedral spaces, and $A\in Z_k(X)$ and $B\in Z_l(X)$, then the \emph{cross product}
\[
A\times B\colon X\times Y\to \Z,\;\; (x,y)\mapsto A(x)\cdot B(x)
\]
of $A$ and $B$ is a tropical cycle again. 

A \emph{rational function} on a \boundaryless{} rational polyhedral space $X$ is a continuous function $\phi\colon X\to \R$ such that $\phi$ is piecewise affine  with integral slopes in every chart. As this is a local condition, rational functions define a sheaf $\mathcal \mM_X$ of Abelian groups. The group of \emph{tropical Cartier divisors} on $X$ is given by $\CDiv(X)=\Gamma(X, \mM_X/\Aff_X)$. For every $\phi\in \Gamma(X,\mM_X)$ we denote its image in $\CDiv(X)$ by $\divv(\phi)$, and refer to it as the associated \emph{principal divisor}. There exists natural bilinear map $\CDiv(X)\times Z_k(X)\to Z_{k-1}(X)$, the intersection pairing of divisors and tropical cycles.

Note that a \boundaryless{} rational polyhedral space $X$ does not automatically have natural fundamental cycle, that is there is no canonical element in $Z_*(X)$ in general. 
\begin{defn}
We will say that a \boundaryless{} rational polyhedral space \emph{$X$ has a fundamental cycle} if $X$ is pure-dimensional and the extension by $0$ of the constant function with value $1$ on $X^\reg$ defines a tropical cycle. In that case we will denote this tropical cycle by $[X]$, and refer to it as the {\em fundamental cycle} of $X$. We will say that a Cartier divisor $D\in \CDiv(X)$ on a tropical space $X$ with fundamental cycle is  \emph{effective}, if its \emph{associated Weil divisor} $[D]\coloneqq D\cdot [X]$ is effective.
\end{defn}

If $X$ is a tropical manifold then it has a fundamental cycle $[X]$, which is the unity of the \emph{tropical intersection product} on $Z_*(X)$. The tropical intersection product is compatible with intersections with Cartier divisors in the sense that
\[
D\cdot A= [D]\cdot A
\]
for every Cartier divisor $D\in \CDiv(X)$ and tropical cycle $A\in Z_*(X)$. Furthermore, the morphism
\[
\CDiv(X)\mapsto Z_{\dim(X)-1}(X), \;\; D\mapsto [D]
\]
is an isomorphism (see \cite[Corollary 4.9]{TropicalCocycles}).
If $X$ is locally isomorphic to open subsets of $\R^n$,  then a Cartier divisor $D\in \CDiv(X)$ is effective if and only if it is locally given by \emph{concave} rational functions. This follows from the fact that every tropical hypersurface of $\R^n$ is realizable. Here, a rational function is concave if it is the restriction of a concave rational function on $\R^n$ in sufficiently small local charts. Also note that concave functions appear rather than convex ones, because we are using the ``$\min$''-convention (see Remark \ref{rem:min convention}).

\subsection{Line bundles}
\label{subsec:line bundles}
A \emph{tropical line bundle} on a \boundaryless{} rational polyhedral space $X$ is an $\Aff_X$-torsor. More geometrically, it is a morphism $Y\to X$ of \boundaryless{} rational polyhedral spaces such that locally on $X$ there are trivializations $Y\cong  X\times \R$, where two such  trivializations are related via the translation by an integral affine function. More precisely, if two trivializations are defined over $U\subseteq X$, then the transition between them is of the form
\[
U\times \R \to U\times\R,\;\; (u, x)\mapsto (u, x+\phi(u))
\]
for some $\phi\in \Gamma(U,\Aff_X)$. The standard argument using \v{C}ech cohomology shows that the set of isomorphism classes of tropical line bundles on $X$ is in natural bijection to $H^1(X, \Aff_X)$. In particular, isomorphism classes of tropical line bundles form a group.
A rational section of a tropical line bundle $Y\to X$ is a continuous section that is given by a rational function in all trivializations. 
Exactly as in algebraic geometry, every tropical Cartier divisor $D$ on $X$ defines a line bundle $\mL(D)$ on $X$ that comes with a canonical rational section. This defines a bijection between $\CDiv(X)$ and isomorphism classes of pairs $(\mL,s)$ of a tropical line bundle $\mL$ on $X$ and a rational section $s$ of $\mL$.

\subsection{Homology and cohomology} \label{subsec:tropical homology}
Let $X$ be a \boundaryless{} rational polyhedral space. To define the tropical homology and cohomology groups, we need sheaves $\Omega^p_X$ of tropical $p$-forms for $p>0$. On the open subset $X^\reg$ it is clear that we would like $\Omega^p_X$ to be isomorphic to $\bigwedge^p \Omega^1_X$. However, this is not a suitable definition globally because in general $\bigwedge^p \Omega^1_X$ can be nonzero even for $p>\dim(X)$ (see \cite[Example 2.9]{AF1}).  One thus defines $\Omega^p_X$ as the image of the natural map
\[
\bigwedge\nolimits^p\Omega^1_X\to \iota_*\left(\bigwedge\nolimits^p\Omega^1_X\vert^{\vphantom{p}}_{X^\reg}\right) \ ,
\]
where $\iota\colon X^\reg\to X$ is the inclusion.

The singular tropical homology groups are defined similar to the integral singular homology groups, but with different coefficients. More precisely, there is a coarsest stratification of $X$ such that the restrictions of the constructible sheaf $\Omega^1_X$ is locally constant on all the strata, and only singular simplices are allowed that respect this stratification in the sense that each of their open faces is mapped into a single stratum. The $(p,q)$-th chain group is then defined as
\[
C_{p,q}(X)= \bigoplus_{\sigma\colon \Delta^q\to X \text{ allowable}}\Hom(\Omega^p_X,\Z_{\sigma(\Delta^q)}) \ ,
\] 
where $\Delta^q$ denotes the standard $q$-simplex, the sum runs over all $q$-simplices respecting the stratification, and $\Z_{\sigma(\Delta^q)}$ denotes the constant sheaf associated to $\Z$ on $\sigma(\Delta^q)$.
With the usual boundary operators this defines chain complexes $C_{p,\bullet}$ and the tropical homology groups which are defined as $H_{p,q}(X)=H_q(C_{p,\bullet}(X))$. 

Dualizing (over $\Z$) the chain complexes $C_{p,\bullet}(X)$ yields cochain complexes $C^{p,\bullet}(X)$ whose cohomology are the tropical cohomology groups $H^{p,q}(X)=H^q(C^{p,\bullet}(X))$. There is a natural isomorphism 
\[
H^{p,q}(X) \cong H^q(X,\Omega^p_X) \ .
\]

\subsection{The first Chern class map}
The quotient map $d\colon\Aff_X\to \Omega^1_X$ of sheaves on a \boundaryless{} rational polyhedral space induces a morphism 
\begin{equation*}
c_1\coloneqq H^1(d)\colon H^1(X,\Aff_X)\to H^1(X,\Omega^1_X) \cong H^{1,1}(X) 
\end{equation*}
called the \emph{first Chern class map} from the group of all tropical line bundles on $X$ to the $(1,1)$-tropical cohomology group of $X$. Using the first Chern class map, any divisor $D\in \CDiv(X)$ has an associated $(1,1)$-cohomology class $c_1(\mL(D))$.

\subsection{The tropical cycle class map}

Exactly as in algebraic geometry, there is a tropical cycle class map that assigns a class in tropical homology to every tropical cycle. More precisely, on any closed rational polyhedral space $X$, there exist morphisms
\[
\cyc\colon Z_k(X)\to H_{k,k}(X) 
\]
for every $k\in \N$.
We will only need an explicit description of the tropical cycle class map for $1$-dimensional tropical cycles, that is when $k=1$. If $A\in Z_1(X)$, then its support $\vert A\vert $ is a compact (not necessarily smooth) tropical curve. For each open edge $e$ of $\vert A\vert$ choose a generator $\eta_e\in T_x\vert A\vert$ for some $x\in e$. By taking parallel transports of $\eta_e$ along $e$ we actually obtain a generator for all $T_y\vert A\vert$ with $y\in e$. Therefore, $\eta_e$ defines a morphism $\Omega^1_{\vert A \vert} \to \Z_e$ (recall that $\Z_e$ denotes the constant sheaf on $e$ associated to $\Z$), which can be uniquely extended to a morphism $\Omega^1_{\vert A\vert } \to \Z_{\overline e}$. Precomposing with the morphism $\Omega^1_{X}\to \Omega^1_{\vert A\vert}$ defined by the inclusion $\vert A\vert \to X$, one obtains a morphism $\eta_{\overline e}\in \Hom(\Omega^1_X ,\Z_{\overline e})$. To complete the construction, one has to choose a homeomorphism $\gamma_{\overline e}\colon \Delta^1\to \overline e$ that parametrizes $\overline e$ in the direction specified by $\eta_e$. Let us denote the element in $C_{1,1}(X)$ defined by $\gamma_{\overline e}$ and $\eta_{\overline e}$ by $\gamma_{\overline e}\otimes \eta_{\overline e}$. Then $\cyc(A)$ is represented by the cycle
\[
\sum_e A(e)\cdot \gamma_{\overline e}\otimes \eta_{\overline e}\in C_{1,1}(X)  \ ,
\]
where the sum runs over all open edges of $\vert A\vert$ and $A(e)$ denotes the weight of the tropical $1$-cycle $A$ on $e$.

\begin{example}
Let $\Gamma$ be the graph from Example \ref{example:metric graph}, and denote its edges by $e_1$, $e_2$, and $e_3$. Let $v$ and $w$ be the vertices of $\Gamma$ and orient all edges from $v$ to $w$. Let $\eta_i$ be the primitive tangent direction on $e_i$ in the chosen direction. Then $\cyc[\Gamma]$ is represented by the $(1,1)$-chain
\[
\gamma_1\otimes \eta_1+ \gamma_2\otimes \eta_2 + \gamma_3\otimes \eta_3 \ ,
\]
where $\gamma_i$ is any path that parametrizes $e_i$ from $v$ to $w$. This is indeed a cycle. Its boundary is given by
\[
w\otimes (\eta_1+\eta_2+\eta_3) -v\otimes (\eta_1+\eta_2+\eta_3) \ ,
\]
which vanishes: locally at $v$ (respectively at $w$), the graph $\Gamma$ looks like the star-shaped set depicted to the right in Figure \ref{fig:singular and smooth curve}, and the vectors $\eta_i$ are the (negatives of the) primitive generators of the rays of the star-shaped set. Since these sum to $0$, the boundary is $0$. 
\end{example}

\subsection{Identities in tropical homology}

In \cite{AF1} we studied various operations on tropical homology and cohomology and showed how to carry over identities known for singular homology to the tropical setting. For example, there are pull-backs of cohomology classes and push-forwards of homology classes along morphisms of \boundaryless{} rational polyhedral spaces, there is a cup product ``$\smile$'' on tropical cohomology and a cap product ``$\frown$'' that makes the tropical homology groups a module over the tropical cohomology ring. There also are cross products ``$\times$'' of both homology and cohomology classes. We will refer the reader to \cite{AF1} for the details regarding these operations. For the reader's convenience, we have summarized the most important identities for the tropical cycle class map in the following theorem:
\begin{thm}[{\cite{AF1}}]
\label{thm:cycle class map commutes with operations}
Let $X$, $Y$, and $Z$ be closed rational polyhedral spaces, let $f\colon X\to Z$ be a proper morphism, let $A\in Z_*(X)$, $B\in Z_*(Y)$ and $D\in \CDiv(X)$. Then we have
\begin{align*}
\cyc(f_*A) \;&=\qquad f_*\cyc(A)\ ,\\
\cyc(A\times B)&= \quad \cyc(A)\times \cyc(B)\ \text{, and}  \\
\cyc(D\cdot A)\;&= \;c_1(\mL(D))\frown \cyc(A) \ .
\end{align*}
\end{thm}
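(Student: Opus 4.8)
The plan is to prove the three compatibility identities in Theorem~\ref{thm:cycle class map commutes with operations} by reducing each of them to the explicit description of the cycle class map and, where possible, to the corresponding identity already known in singular homology. I would treat the three statements in increasing order of difficulty, and the strategy throughout is the same: the cycle class map is defined by constructing a constructible-sheaf-theoretic refinement of the usual singular chains, so each tropical identity should follow from its classical counterpart plus a check that the sheaf-coefficient data (the $\eta$'s) transforms correctly. Since the full construction of $\cyc$ in all degrees is only sketched in \S\ref{subsec:tropical cycles} (the paper says it gives the explicit formula only for $k=1$), I would prove the identities at the level of the chain-level representatives and functoriality established in \cite{AF1}, rather than recomputing everything from scratch.

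For the \textbf{cross product} identity $\cyc(A\times B)=\cyc(A)\times\cyc(B)$, I would argue that both the formation of the cross product of cycles and the cross product on tropical homology are defined by compatible operations on the underlying chain complexes: the product of allowable simplices is (after suitable subdivision) a sum of allowable simplices of the product space, and the coefficient sheaf $\Omega^1_{X\times Y}$ decomposes so that $\Hom(\Omega^1,\Z)$-data on factors tensors to $\Hom(\Omega^1_{X\times Y},\Z)$-data on the product. Thus the Eilenberg--Zilber-type map realizing the homological cross product carries the chain representing $\cyc(A)$ times the chain representing $\cyc(B)$ to a chain representing $\cyc(A\times B)$; the weight function multiplies exactly as in the definition $A\times B(x,y)=A(x)\cdot B(y)$, so the identity holds on the nose at chain level.

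For the \textbf{push-forward} identity $\cyc(f_*A)=f_*\cyc(A)$, I would use naturality of the cycle class map under proper morphisms, which is part of the package developed in \cite{AF1}. The content here is that the combinatorial definition of $f_*A$ via the formula $f_*A(y)=\sum_{x\in f^{-1}\{y\}}|\coker d_x f|$ matches the effect of the induced map on homology. I would verify this in the key degree $k=1$ directly from the explicit formula: for a $1$-cycle $A$, the image $f(\overline e)$ of an edge is again an edge (or a point) of $|f_*A|$, the tangent generator $\eta_e$ pushes to $d_xf(\eta_e)=|\coker d_xf|\cdot\eta_{f(e)}$, and the factors $|\coker d_xf|$ are precisely the multiplicities appearing in $f_*A$; the general case follows from the functoriality of the chain-level construction.

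The \textbf{third identity} $\cyc(D\cdot A)=c_1(\mL(D))\frown\cyc(A)$ is the one I expect to be the main obstacle, since it links the intersection-theoretic operation $D\cdot A$ on the cycle side with the cap product against the first Chern class on the (co)homology side. My plan is to reduce to the case where $A=[X]$ is a fundamental cycle and $D$ is a principal divisor $\divv(\phi)$, using linearity and the definition of the cap product as a module action. The heart of the argument is a local computation: near a codimension-one stratum of $|A|$, the divisor $D$ is given by a rational function $\phi$ whose collection of integral slopes defines a $1$-cochain representing $c_1(\mL(D))$, and the balancing/weight data of $D\cdot A$ is exactly the pairing of this cochain against the $\eta$-decorated chain representing $\cyc(A)$. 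The difficulty is bookkeeping: one must match the combinatorial slope data of $\phi$ with the \v{C}ech cocycle computing $c_1$, and ensure the sign and multiplicity conventions agree across the two constructions. I would isolate this as a lemma comparing the Weil-divisor-to-homology assignment $D\mapsto\cyc([D])$ with the cohomological $c_1$, and then extend from $[X]$ to general $A$ by the projection formula for the cap product, all of which are available from \cite{AF1}.
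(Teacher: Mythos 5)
The paper does not prove Theorem~\ref{thm:cycle class map commutes with operations} at all: it is imported verbatim from \cite{AF1} (note the citation in the theorem header), and the present paper only ever uses it as a black box (e.g.\ in Proposition~\ref{prop:Pontryagin product commutes with cycle map} and in \S\ref{sec:the formula}). So there is no in-paper proof to compare your proposal against; what you have written is a plan for reconstructing the content of \cite{AF1} itself. As such a plan, the first two items are sound: the cross-product identity really is a chain-level Eilenberg--Zilber statement once one checks that $\Omega^1_{X\times Y}$ decomposes compatibly, and the push-forward identity in degree one reduces, exactly as you say, to matching $d_xf(\eta_e)=\lvert\coker d_xf\rvert\cdot\eta_{f(e)}$ against the weight formula for $f_*A$. (Be aware that you are implicitly invoking functoriality results from \cite{AF1} that are themselves part of what is being established there, so the argument is only non-circular if you take those chain-level constructions as given.)

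The concrete gap is in your third identity. You propose to ``reduce to the case where $A=[X]$ is a fundamental cycle,'' but the theorem is stated for an arbitrary closed rational polyhedral space $X$, which need not be pure-dimensional and need not admit a fundamental cycle; the reduction you describe does not exist in that generality. Restricting to $\lvert A\rvert$ via the projection formula only returns you to the general case of a weighted cycle on its own support, so the local comparison between the slope data of $\phi$ along codimension-one faces of $\lvert A\rvert$ and the \v{C}ech cocycle representing $c_1(\mL(D))$ must be carried out for an arbitrary balanced weighted complex, not just for a fundamental cycle. That local lemma --- including the sign and multiplicity bookkeeping you flag --- is precisely the hard content of the statement, and your proposal names it without supplying it.
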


If $X$ is a closed rational polyhedral space, then the morphism from $X$ to a point defines a morphism $H_{0,0}(X)\to \Z$ by identifying the $(0,0)$-tropical homology group of a point with $\Z$. The image of a tropical cycle $\alpha\in H_{0,0}(X)$ is called the \emph{degree of $\alpha$} and denoted by $\int_X\alpha$. It is a direct consequence of the first equation in Theorem \ref{thm:cycle class map commutes with operations} that 
\[
\int_X A=\int_X \cyc(A)
\]
for every $A\in Z_0(X)$.

If $X$ is a closed tropical manifold, then homology and cohomology are dual to each other, in the sense that the morphism
\[
H^{*,*}(X)\to H_{*,*}(X),\;\; c\mapsto c\frown \cyc[X]
\]
is an isomorphism \cite{Lefschetz,AF1}. In this context one says that $c$ is \emph{Poincar\'e dual} to $c\frown \cyc[X]$. Poincar\'e duality allows to define an \emph{intersection product} for tropical homology classes on a closed tropical manifold $X$. More precisely, if $\alpha,\beta\in H_{*,*}(X)$, and $c\in H^{*,*}(X)$ is Poincar\'e dual to $\alpha$, then one defines
\[
\alpha\cdot \beta\coloneqq c\frown \beta.
\]

\begin{rem}
\label{rem:min convention}
Both the intersection pairing between tropical Cartier divisors and tropical cycles, and the tropical cycle class map are not entirely free of choices. The intersection pairing depends on whether one measures incoming or outgoing slopes. When measuring incoming slopes, concave functions define effective principal divisors, whereas when measuring outgoing slopes, convex functions define effective principal divisors. Since minima of linear functions are concave, and maxima of linear functions are convex, one speaks of the ``$\min$''- and ``$\max$''-conventions, respectively. The cycle class map, on the other hands, depends on a consistent choice of isomorphisms 
\[
\bigwedge\nolimits^k N\xrightarrow{\cong} H_k(N_\R,N_\R\setminus\{0\};\Z)
\] 
for any lattice $N$ of any rank $k$ (see \cite[\S 5]{AF1}).

If one wants Theorem \ref{thm:cycle class map commutes with operations} to hold, one has to make the choices involved in the definitions of the intersection pairing and the cycle class map consistently. In other words, the choice of either ``$\min$''- or ``$\max$''-convention will determine the sign of the cycle class map. 
\textit{In this paper, we will choose the ``$\min$''-convention}, because it makes the formulas in \S\ref{sec:the formula} nicer, but the same formulas hold true in the ``$\max$''-convention after appropriately adjusting the sign.
\end{rem}

\section{Tropical Jacobians}
\label{sec:tropical Jacobians}
In this section we review the definition of tropical Jacobians, closely following \cite{MZjacobians}. Let $\Gamma$ be a compact and connected smooth tropical curve. We write $\Omega_\Z(\Gamma)\coloneqq H^0(\Gamma,\Omega_\Gamma^1)$ for the group of global integral $1$-forms, and $\Omega_\R(\Gamma)\coloneqq \Omega_\Z(\Gamma)\otimes_\Z\R$ for the group of (real) $1$-forms. A $1$-form on $\Gamma$ is completely determined by its restrictions to the edges of $\Gamma$, and these restrictions are constant and completely determined by a real number and an orientation of the edge: it will be of the form $rdx$, where $r\in \R$, and $x$ is the chart on the edge determined by the orientation. Extracting the data of its restrictions to the edges out of a $1$-form gives rise to a natural morphism $\Omega_\R(\Gamma)\to C_1(\Gamma;\R)$. Since the outgoing primitive direction vectors at any point of $\Gamma$ (in any chart around that point) sum to $0$, the chains in the image of $\Omega_\R(\Gamma)$ will in fact be $1$-cycles, that is they are mapped to $0$ by the boundary morphism. It is not hard to see that the induced map $\Omega_\R(\Gamma)\to H_1(\Gamma;\R)$ is an isomorphism.

\begin{rem}
\label{rem:flows}
Another way to think of the elements of $\Omega_\Z(\Gamma)$ is as integral flows. Given $\omega\in \Omega_\Z(\Gamma)$, we have already observed that the restriction $\omega|_e$ to an open edge $e$ is determined by a direction and a nonnegative integer. Conversely, a collection of directions and nonnegative integers for every edge in $\Gamma$ will define a global $1$-form if and only if this collection defines a flow.
\end{rem}

Global $1$-forms on $\Gamma$ can be integrated on singular $1$-chains in $\Gamma$. We obtain a pairing
\begin{equation}
\label{equ:integration pairing}
\Omega_\R(\Gamma)\times C_1(\Gamma;\R) \to \R ,\;\; (\omega, c) \mapsto \int_c\omega \ ,
\end{equation}
which can be shown to induce a morphism $H_1(X;\R)\to \Omega_\R(\Gamma)^*$. Together with the isomorphism $H_1(\Gamma;\R)\cong\Omega_\R(\Gamma)$ from above, we obtain a natural bilinear form $E$ on $H_1(\Gamma;\R)$, which can be described explicitly. Namely, for two $1$-cycles $c_1$ and $c_2$, the pairing $E(c_1,c_2)$ is the weighted length of the intersection of $c_1$ and $c_2$, where an oriented line segment occurring in $c_1$ and $c_2$ with weights $\lambda$ and $\mu$, respectively, contributes with weigh $\lambda\cdot \mu$. This bilinear form is clearly symmetric and positive definite. In particular, it is a perfect pairing, and hence the morphism $H_1(X;\R)\to \Omega_\R(\Gamma)^*$ we used to define it is an isomorphism. Via this isomorphism $H_1(\Gamma;\Z)$ becomes a sublattice of $\Omega_\R(\Gamma)^*$ of full rank, and the positive definite symmetric bilinear form $E$ induces a positive definite symmetric bilinear form $Q$ on $\Omega_\R(\Gamma)^*$. 
The full-rank sublattice of $\Omega_\R(\Gamma)^*$ that has integer pairings with the elements of $H_1(X,\Z)$ with respect to $Q$ is precisely $\Omega_\Z(\Gamma)^*$. 

\begin{defn}
\label{def:tropical Jacobian}
The \emph{tropical Jacobian} associated to the compact and connected smooth tropical curve $\Gamma$ is the pair consisting of the real torus
\[
\Jac(\Gamma) \coloneqq \Omega_\R(\Gamma)^*/ H_1(\Gamma;\Z) 
\]
and the bilinear form $Q$ that is defined on the universal cover $\Omega_\R(\Gamma)^*$ of $\Jac(\Gamma)$. 
\end{defn}

\begin{rem}
By the universal coefficient theorem, we also have an isomorphism $H^1(\Gamma;\R)\cong H_1(\Gamma;\R)^*$.  Together with the isomorphism $\Omega_\R(\Gamma)\cong H_1(\Gamma;\R)$ from above one obtains an isomorphism $H^1(\Gamma;\R)\cong \Omega_\R(\Gamma)^*$. It is therefore also possible to write the Jacobian  of $\Gamma$ as the quotient $H^1(\Gamma;\R)/H_1(\Gamma;\Z)$. 
\end{rem}

Now fix a base point $q\in \Gamma$. Given any other point $p\in \Gamma$ there is a path $\gamma_p$ connecting $q$ to $p$.
As any other path from $q$ to $p$ differs from $\gamma_p$ by an integral  $1$-cycle, the class of $\gamma_p$ in $\big(C_1(\Gamma;\Z)/B_1(\Gamma;\Z)\big)/H_1(\Gamma;\Z)$ is independent of the choice of $\gamma_p$. 
Here, $B_1(\Gamma;\Z)$ denotes the group of $1$-boundaries.
Using the pairing \eqref{equ:integration pairing}, we obtain an element in $\Jac(\Gamma)$ that only depends on the choice of $q$. This defines the \emph{Abel--Jacobi map}
\begin{equation*}
\AJ_q\colon \Gamma \to \Jac(\Gamma) \ .
\end{equation*} 

Let $p\in \Gamma$, and let $U$ be a sufficiently small connected open neighborhood of $p$. More precisely, $U$ should be connected and $U\setminus \{p\}$ should be disjoint from  $V(\Gamma)$. Then for every $p'\in U\setminus\{p\}$ there exists $r>0$ and a geodesic path $\gamma\colon [0,r]\to U$  from $p$ to $p'$. Let $e$ denote the unique open edge $e$ of $\Gamma$ containing $p'$, and let $\eta$ denote the primitive integral tangent vector on $e$ pointing from $p$ towards $p'$. If $x$ is any lift of $\AJ_q(p)$ to the universal cover $\Omega_\R(\Gamma)^*$, then by definition, $p'$ lifts to $x+r\cdot \delta$, where $\delta$ is given by
\[
\delta\colon \Omega_\R(\Gamma) \rightarrow \R ,\;\;\omega\mapsto \frac 1r \int_\gamma \omega= \langle\omega|_e,\eta\rangle \ .
\]	
If we identify $\Omega_\R(\Gamma)$ with flows on $\Gamma$ (as in Remark \ref{rem:flows}) then $\delta$ is the map assigning to a flow $\omega$ on $\Gamma$ its flow on $e$ in the direction specified by $\eta$. In particular, $\delta$ is integral, that is $\delta\in\Omega_\Z(\Gamma)^*$. This shows that $\AJ_q$ is, in fact, a morphism of \boundaryless{} rational polyhedral spaces, and that its action on the tangent space of $e$ is given by
\[
\delta=(d\AJ_q)(\eta)\ .
\]

\begin{example}
\label{example:Theta and W1}
Let $\Gamma$ be the smooth tropical curve associated to the metric graph that consists of two vertices which are connected by three edges of length $1$ (the graph of Example \ref{example:metric graph} with $a=b=c=1$). It is depicted to the left in Figure \ref{fig:Theta and W1}. We choose one of the vertices as the base point $q$ and orient the edges of $\Gamma$ such that one edge, call it $e_3$ is oriented towards $q$ and the other two edges, call them $e_1$ and $e_2$, are oriented away from $q$. The orientations define two simple closed loops $c_1$ and $c_2$ in $\Gamma$, where $c_i$ first follows $e_i$ and then $e_3$. These loops define a basis for $H_1(\Gamma;\R)$, and hence for $\Omega_\Z(\Gamma)$. Let $\delta_1,\delta_2\in \Omega_\Z(\Gamma)^*$ be the dual basis. Since the signed length of $c_i\cap c_j$ is $2$ if $i=j$ and $1$ if $i\neq j$, the injection $H_1(\Gamma;\Z)\to \Omega_\R(\Gamma)^*$ maps $c_1$ to $(2,1)$ and $c_2$ to $(1,2)$ in the coordinates defined by the basis $\delta_1,\delta_2$. If follows that 
\[
\Jac(\Gamma)= \raisebox{.75ex}{$\R^2$}\bigg/\raisebox{-.75ex}{$
\Z\begin{pmatrix}
1\\2
\end{pmatrix}
+\Z\begin{pmatrix}
2\\1
\end{pmatrix}$} \ ,
\]
where the integral structure is given by $\Z^2\subseteq \R^2$.

The Abel-Jacobi map sends $q$ to $0$ in this quotient. If $\gamma_1$ is the geodesic path along $e_1$ that starts at $q$, then
\[
(d\Phi_q)(\gamma_1(t))=  t\cdot \begin{pmatrix}
1\\0
\end{pmatrix} + H_1(\Gamma;\Z)
\]
for all $t\in [0,1]$
because the path from $q$ to $\gamma_1(t)$ along $e_1$ intersects $c_1$ in an edge segment of length $t$, and $c_2$ in a point (an edge segment of length $0$). Similarly, if $\gamma_2$ is a geodesic path along $e_2$, and $\gamma_3$ is a geodesic path along $e_3$, both starting at $q$, then
\begin{align*}
(d\Phi_q)(\gamma_2(t))&=  t\cdot \begin{pmatrix}
0\\1
\end{pmatrix} + H_1(\Gamma;\Z) \ \text{, and}\\
(d\Phi_q)(\gamma_3(t))&=  t\cdot \begin{pmatrix}
-1\\-1
\end{pmatrix} + H_1(\Gamma;\Z) 
\end{align*}
for all $t\in [0,1]$.
\end{example}

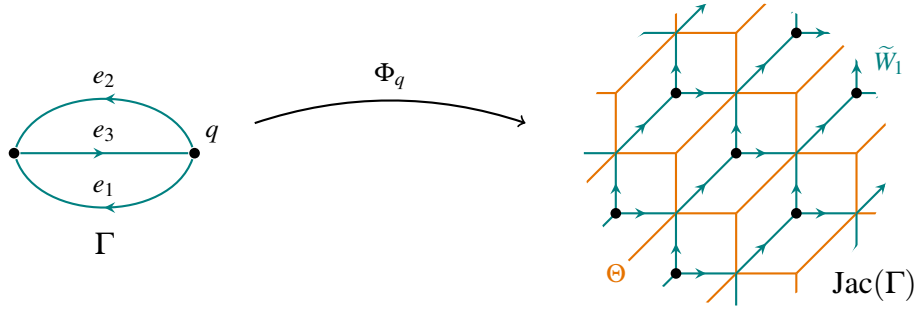
\begin{figure}
\begin{tikzpicture}[scale=0.8,font=\footnotesize,auto]

\draw [thick,->](-8,3.5) .. controls (-6.5,4) and (-5,4) .. node[above,pos=.5] {$\AJ_q$}(-3.5,3.5); 

\begin{scope}[xshift=-12cm,yshift=3cm, thick]
\node at (1.5,-1.5){\color{black}\normalsize $\Gamma$};
\node (left) at (0,0) [circle,fill=black,inner sep=0.05cm]{};
\node (right) at (3,0) [circle,fill=black,inner sep=0.05cm, label={[label distance=-.1cm]45:$q$} ]{}
  edge [teal, bend left=70,postaction={on each segment={mid arrow=teal}}] node[midway,swap,black]{$e_1$}(left)
  edge [teal, postaction={on each segment={mid arrow=teal}}, bend right=70] node[midway,swap,black]{$e_2$}(left);
  \draw [teal, postaction={on each segment={mid arrow=teal}}] (left)--node[midway,black]{$e_3$}(right);
\end{scope}

\begin{scope}[yshift=3cm]
\node at (-2,-2) {\color{orange!90!black}$\Theta$};
\node at (2.55,1.55) {\color{teal}$\widetilde W_1$};
\node at (2.3,-2.2) {\normalsize $\Jac(\Gamma)$};
\clip (0,0) circle [radius=2.53cm];
\foreach \i in {-4,...,4} {
	\foreach \j in {-4,...,4}  {
		\coordinate (p) at ($\i*(2,1)+\j*(1,2)$);			
		\draw [postaction={on each segment={mid arrow=teal}},teal,thick] (p)--++(1,0);
		\draw [postaction={on each segment={mid arrow=teal}},teal,thick] (p)--++(0,1);
		\draw [postaction={on each segment={mid arrow=teal}},teal,thick] ($(p)+(-1,-1)$)-- (p);
		\draw [orange!90!black,thick] ($(p)+(1,0)$) -- ($(p)+(1,1)$);
		\draw [orange!90!black,thick] ($(p)+(1,0)$) -- ($(p)+(2,0)$);
		\draw [orange!90!black,thick] ($(p)+(1,0)$) -- ($(p)+(0,-1)$);	
		\node at (p) [circle, fill=black, inner sep=0.05cm] {};	
	}}
\end{scope}
\end{tikzpicture}
\caption{
A tropical curve of genus $g$, the universal cover of its Jacobian, and the sets {\color{teal}$\widetilde W_1$} and {\color{orange!90!black}$\Theta$} lifted to the universal cover (see \S\ref{subsec:Theta divisor} for the definition of $\Theta$).}
\label{fig:Theta and W1}
\end{figure}

\section{Algebraic, homological, and numerical equivalence}
\label{sec:notions of equivalence}

In this section we study different notions of equivalence for tropical cycles on \boundaryless{} rational polyhedral spaces, with a focus on real tori.

\subsection{Algebraic equivalence}

Following \cite{ZharkovMinusC}, we make the following definition.

\begin{defn}
Let $X$ be a \boundaryless{} rational polyhedral space. Let $R_{\mathrm{alg}}$ be the subgroup of $Z_*(X)$ generated by tropical cycles of the form
\[
p_*(q^*(t_0-t_1)\cdot W) \ ,
\]
	where $W$ is a tropical cycle on $X\times \Gamma$ for some compact and connected smooth tropical curve $\Gamma$ containing the two points $t_0,t_1\in \Gamma$, and $p\colon X\times \Gamma \to X$ and $q\colon X\times \Gamma\to \Gamma$ are the natural projections. Note that because $\Gamma$ is smooth, the difference $t_0-t_1$ defines a tropical Cartier divisors on $\Gamma$ (see \S\ref{subsec:tropical cycles}) and tropical Cartier divisors can be  pulled-back along any morphism of \boundaryless{} rational polyhedral spaces.

 We say that two tropical cycles $A, B\in Z_*(X)$ are \emph{algebraically equivalent}, denoted by $A\sim_{\mathrm{alg}}B$, if their classes in $Z_*(X)/R_{\mathrm{alg}}$ coincide.
\end{defn}

\begin{prop}
\label{prop:algebraic equivalence defines an ideal}
Let $X$ be a \boundaryless{} tropical manifold, let $A,B,C\in Z_*(X)$ be tropical cycles on $X$, and assume that $A\sim_{\mathrm{alg}} B$. Then
\[
A\cdot C\sim_{\mathrm{alg}} B\cdot C \ .
\]
\end{prop}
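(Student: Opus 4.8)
The plan is to reduce the statement to a property of the generators of $R_{\mathrm{alg}}$. Since algebraic equivalence is defined via the quotient $Z_*(X)/R_{\mathrm{alg}}$, it suffices to show that intersecting with a fixed cycle $C$ sends $R_{\mathrm{alg}}$ into itself; that is, if $A - B \in R_{\mathrm{alg}}$ then $(A-B)\cdot C \in R_{\mathrm{alg}}$. Because $R_{\mathrm{alg}}$ is generated by cycles of the form $p_*(q^*(t_0-t_1)\cdot W)$, and intersection with $C$ is bilinear, it is enough to show that for each such generator, its intersection with $C$ is again a $\Z$-linear combination of generators of the same form. So I would fix a single generator $p_*(q^*(t_0 - t_1)\cdot W)$ with $W$ a cycle on $X \times \Gamma$ and prove that
\[
p_*\bigl(q^*(t_0-t_1)\cdot W\bigr)\cdot C \sim_{\mathrm{alg}} 0 \ .
\]

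The key computational step is a projection-formula manipulation. The idea is to pull $C$ back along $p$ to $X\times\Gamma$, intersect there, and push forward. Concretely, I expect an identity of the shape
\[
p_*\bigl(q^*(t_0-t_1)\cdot W\bigr)\cdot C = p_*\bigl(q^*(t_0-t_1)\cdot (W\cdot p^*C)\bigr) \ ,
\]
which exhibits the left-hand side as a generator of $R_{\mathrm{alg}}$ built from the cycle $W \cdot p^* C$ on $X \times \Gamma$ (with the same curve $\Gamma$ and the same points $t_0, t_1$). To justify this I would use the projection formula for proper pushforward against Cartier-divisor (and cycle) intersections, together with the compatibility of flat/smooth pullback with intersection products on the tropical manifold $X\times\Gamma$. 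The divisor $q^*(t_0-t_1)$ commutes past $p^*C$ because it is pulled back from the $\Gamma$-factor while $C$ is pulled back from the $X$-factor, so the two intersections are along transverse directions and the order does not matter.

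The main obstacle I anticipate is purely formal bookkeeping rather than a deep difficulty: making precise which projection-formula and commutativity statements for tropical intersection products are available. The push-forward $p_*$ is only defined for proper morphisms, so I must confirm $p\colon X\times\Gamma\to X$ is proper — this holds because $\Gamma$ is compact, so $p$ is the projection off a compact factor and preimages of compact sets are compact. I also need that $X \times \Gamma$ is a tropical manifold (it is, being a product of the manifold $X$ with the smooth curve $\Gamma$), so that the full intersection product on $Z_*(X\times\Gamma)$ is defined, commutative, and associative, and that pullback $p^*$ of cycles along $p$ is well-defined and compatible with this product. Granting these standard facts about tropical intersection theory on manifolds (as recalled in \S\ref{subsec:tropical cycles}), the projection formula gives the displayed identity, and the result follows immediately: intersecting a generator of $R_{\mathrm{alg}}$ with $C$ yields another generator, hence $A\cdot C - B\cdot C = (A-B)\cdot C \in R_{\mathrm{alg}}$.
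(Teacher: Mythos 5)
Your proposal is correct and follows essentially the same route as the paper: reduce to a single generator $p_*(q^*(t_0-t_1)\cdot W)$ of $R_{\mathrm{alg}}$ and apply the projection formula to rewrite its intersection with $C$ as $p_*\bigl(q^*(t_0-t_1)\cdot (W\cdot (C\times[\Gamma]))\bigr)$, which is again a generator. The paper writes the pullback $p^*C$ explicitly as the cross product $C\times[\Gamma]$ and cites the projection formula of Fran\c{c}ois--Rau, but the argument is otherwise identical to yours.
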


\begin{proof}
By the definition of algebraic equivalence, we may assume that there exists a compact and connected smooth tropical curve $\Gamma$, two points $t_0,t_1\in \Gamma$, and a tropical cycle $W$ on $X\times \Gamma$ such that 
\[
A-B= p_*(q^*(t_0-t_1)\cdot W) \ ,
\]
where $p$ and $q$ denote the projection. Using the projection formula \cite[Theorem 8.3 (1)]{FRIntersection}, we see that
\[
A\cdot C- B\cdot C= (A-B)\cdot C = p_*(q^*(t_0-t_1)\cdot (W\cdot (C\times \Gamma))) \ .
\]
Applying the definition of algebraic equivalence with $W$ replaced  by $W\cdot (C\times \Gamma)$, we obtain that $A\cdot C$ and $B\cdot C$ are algebraically equivalent.
\end{proof}

\begin{defn}
Let $X=N_\R/\Lambda$ be a real torus. A \emph{spanning curve} for $X$ is a $1$-dimensional polyhedral subset $\Gamma\subseteq X$ such that there exists an effective tropical $1$-cycle on $X$ with support $\Gamma$, and such that the parallel transports to $0$ of the direction vectors of the edges of $\Gamma$ span $T_0 X\cong N_\R$. If such a curve exists, we say that $X$ \emph{admits a spanning curve}.
\end{defn}

\begin{prop}
Let $\Gamma$ be a compact and connected smooth tropical curve. Then its Jacobian $\Jac(\Gamma)$ admits a spanning curve.
\end{prop}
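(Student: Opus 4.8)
The plan is to use the image of the Abel--Jacobi map as the spanning curve. Fix a base point $q\in \Gamma$ and consider the Abel--Jacobi morphism $\AJ_q\colon \Gamma\to \Jac(\Gamma)$. Since $\Gamma$ is compact, $\AJ_q$ is proper, so the push-forward $A\coloneqq (\AJ_q)_*[\Gamma]$ is a well-defined tropical $1$-cycle on $\Jac(\Gamma)$; being the push-forward of the effective fundamental cycle $[\Gamma]$, it is effective. I take $\Gamma'\coloneqq \vert A\vert$, the support of $A$, as the candidate spanning curve. By construction there is an effective tropical $1$-cycle, namely $A$, supported on $\Gamma'$, so the first condition in the definition of a spanning curve holds automatically, and it remains only to verify the spanning condition on the direction vectors.

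For this I would use the explicit description of the differential of the Abel--Jacobi map recorded just before Example~\ref{example:Theta and W1}: on an open edge $e$ of $\Gamma$ with primitive integral tangent vector $\eta$, one has $(d\AJ_q)(\eta)=\vf{e}$, where $\vf{e}\in \Omega_\Z(\Gamma)^*$ is the functional $\omega\mapsto \langle \omega\vert_e,\eta\rangle$. Consequently every edge of $\Gamma'$ arises as the image of some edge $e$ of $\Gamma$ with $\vf{e}\neq 0$, and its direction vector, parallel-transported to $0$ (which on a real torus is just the canonical identification $T_0\Jac(\Gamma)\cong \Omega_\R(\Gamma)^*$), is a nonzero scalar multiple of $\vf{e}$.

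The heart of the argument is then the claim that the functionals $\{\vf{e}\}_{e\in E(\Gamma)}$ span $\Omega_\R(\Gamma)^*$. I would prove this by computing their common annihilator: if $\omega\in \Omega_\R(\Gamma)$ satisfies $\vf{e}(\omega)=\langle \omega\vert_e,\eta\rangle=0$ for every edge $e$, then $\omega\vert_e=0$ for every $e$, and since a global $1$-form is completely determined by its restrictions to the edges (as recalled at the beginning of \S\ref{sec:tropical Jacobians}), we get $\omega=0$. Hence the span of the $\vf{e}$ has trivial annihilator and therefore equals all of $\Omega_\R(\Gamma)^*$.

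Finally, I would observe that the edges $e$ with $\vf{e}=0$ are exactly the bridges of $\Gamma$, which $\AJ_q$ collapses to points; these contribute neither an edge to $\Gamma'$ nor anything to the span. Since the zero functionals pair to $0$ with every $\omega$ anyway, the annihilator computation already shows that the \emph{nonzero} $\vf{e}$ — equivalently, the direction vectors of the edges of $\Gamma'$ — span $\Omega_\R(\Gamma)^*\cong T_0\Jac(\Gamma)$. This verifies the spanning condition and completes the argument. The only real subtlety, and the point I would be most careful about, is precisely this collapsing phenomenon: one must ensure that passing from the edges of $\Gamma$ to the (possibly fewer) edges of the image curve $\Gamma'$ does not destroy the spanning property, which is exactly what the annihilator computation guarantees.
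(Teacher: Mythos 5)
Your proposal is correct and follows essentially the same route as the paper: the paper also takes the support of $(\AJ_q)_*[\Gamma]$ as the spanning curve and invokes the explicit description of $(d\AJ_q)(\eta_e)=\delta_e$ from \S\ref{sec:tropical Jacobians}, merely leaving the spanning verification as "it follows directly." Your annihilator computation (and the observation that bridges are exactly the collapsed edges and contribute nothing) just makes that final step explicit.
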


\begin{proof}
For any choice of base point $q\in \Gamma$, the image $\Phi_q(\Gamma)$ of $\Gamma$ under the Abel-Jacobi map is the support of the effective cycle $\Phi_{q*}[\Gamma]$. Using the explicit description given in \S\ref{sec:tropical Jacobians} of the tangent directions in $\Jac(\Gamma)$ of the images of the edges of $\Gamma$, it follows directly that $\Phi_q(\Gamma)$ is a spanning curve for $\Jac(\Gamma)$.
\end{proof}

\begin{prop}
\label{prop:translate is algebraically equivalent}
Let $X=N_\R/\Lambda$ be a real torus that admits a spanning curve $\Gamma$. Let $x\in X$, and recall that we denote by $t_x\colon X\to X$ the translation by $x$. Then for every tropical cycle $A\in Z_*(X)$ we have
\[
A \sim_{\mathrm{alg}} (t_{x})_*A \ .
\]
\end{prop}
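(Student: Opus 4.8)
The plan is to realize the translation $t_x$ as the ``endpoint'' of a family of translations parametrized by the spanning curve $\Gamma$, and then recognize the difference $A - (t_x)_*A$ as one of the generating cycles of $R_{\mathrm{alg}}$. The key idea is that on a real torus, every direction vector $v \in N_\R$ gives a one-parameter family of translations, and the spanning curve $\Gamma$ supplies a whole collection of such directions that span $N_\R$. First I would reduce to the case where $x$ lies in the direction of a single edge of $\Gamma$: since the parallel transports to $0$ of the edge directions of $\Gamma$ span $N_\R \cong T_0 X$, any $x \in X$ can be written (after lifting to $N_\R$) as a sum $x = \sum_i x_i$ where each $x_i$ is a scalar multiple of an edge direction of $\Gamma$. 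Because $(t_x)_* = (t_{x_1})_* \circ \cdots \circ (t_{x_k})_*$ and algebraic equivalence is an equivalence relation compatible with the group structure on $Z_*(X)$, it suffices to prove $A \sim_{\mathrm{alg}} (t_{x_i})_* A$ for a single such $x_i$, and then chain the equivalences together.

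For the single-direction case, I would use $\Gamma$ itself (or rather its defining effective $1$-cycle) as the curve in the definition of algebraic equivalence. The natural construction is to build the tropical cycle $W$ on $X \times \Gamma$ as (a suitable modification of) the pushforward of $A \times [\Gamma]$ under the map $X \times \Gamma \to X \times \Gamma$ given by $(a, t) \mapsto (a + \iota(t), t)$, where $\iota \colon \Gamma \hookrightarrow X$ is the inclusion of the spanning curve. Intersecting $q^*(t_0 - t_1)$ with $W$ and pushing forward along $p$ should then produce exactly $(t_{\iota(t_0)})_* A - (t_{\iota(t_1)})_* A$, because intersecting with the divisor $t_0 - t_1$ on the curve factor restricts the family to the two fibers over $t_0$ and $t_1$, and these fibers are the translates of $A$ by $\iota(t_0)$ and $\iota(t_1)$. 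Choosing $t_0, t_1$ to be the two endpoints of an edge of $\Gamma$ (so that $\iota(t_0) - \iota(t_1)$ is the edge direction scaled by its length), and using that any scalar multiple of an edge direction can be achieved by varying the points along the edge, realizes the desired $x_i$ as $\iota(t_0) - \iota(t_1)$.

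The main obstacle I anticipate is the careful verification that the cycle-theoretic computation $p_*(q^*(t_0 - t_1) \cdot W) = (t_{x_i})_* A - A$ actually holds, with correct multiplicities and no spurious lower-dimensional contributions. This requires knowing that intersecting with the pullback of the point-difference divisor on $\Gamma$ behaves like restriction to fibers, and that the pushforward $p_*$ then recovers the translated cycles without weight corrections. I would handle this by working fiberwise: the projection formula and the compatibility of pullback of Cartier divisors with the product structure (both invoked in the proof of Proposition \ref{prop:algebraic equivalence defines an ideal}) reduce the claim to the observation that for each edge, the restriction of the family $W$ over the edge is a ``prism'' $A \times [\text{edge}]$ pushed forward by the shearing map, whose two boundary fibers are precisely the two translates. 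A secondary technical point is ensuring that the scalar multiples needed in the reduction step all arise from genuine points of $\Gamma$ lying on a single edge; since edges have positive length and translations by real multiples of the primitive edge direction are available for all parameters up to the edge length, one may need to subdivide a long translation into several steps along the same edge, each of which is again an instance of the generating relation — this is routine but must be stated to make the reduction airtight.
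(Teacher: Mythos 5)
Your overall strategy coincides with the paper's: reduce to translations by differences of points lying on a single edge of the spanning curve, then exhibit the difference of translates as a generator of $R_{\mathrm{alg}}$ by shearing the constant family $A\times[\Gamma]$ via $(a,s)\mapsto(a+f(s),s)$ and cutting with $q^*(t_0-t_1)$. The reduction step, the chaining of equivalences, and the fiberwise computation are all as in the paper.

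There is, however, one concrete gap: the definition of algebraic equivalence requires the parameter curve to be a compact, connected, \emph{smooth} tropical curve (smoothness is what makes $t_0-t_1$ a Cartier divisor on it), and the spanning curve $\Gamma\subseteq X$ is in general none of these things --- it is merely a $1$-dimensional polyhedral subset supporting an effective $1$-cycle, possibly disconnected, possibly with weights greater than $1$, and with vertex stars that need not be star-shaped sets. So you cannot take ``$\Gamma$ itself (or its defining effective $1$-cycle)'' as the curve in the definition; your hedge ``a suitable modification'' is attached to $W$, not to the parameter curve, where the real issue sits. The paper's fix is to pass to the component $\Gamma_x$ containing the relevant edge, view it as a metric graph $G$ with edge weights $m(e)$ inherited from the effective $1$-cycle, and replace each edge of length $\ell(e)$ by one of length $\ell(e)/m(e)$; the resulting abstract smooth tropical curve $\widetilde\Gamma$ admits a natural morphism $f\colon\widetilde\Gamma\to\vert\Gamma_x\vert\subseteq X$ (the rescaling by the weights is exactly what makes pullbacks of integral affine functions harmonic at the vertices, i.e.\ what makes $f$ a morphism out of a smooth curve), and the family $W=g_*(A\times[\widetilde\Gamma])$ is then built over $\widetilde\Gamma$. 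With this replacement your argument goes through verbatim; without it, the cycle you construct does not lie in $R_{\mathrm{alg}}$ as defined.
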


\begin{proof}
By the assumptions on $\Gamma$, the point $x$ is in the subgroup of $X$ generated by the differences $y-y'$ for pairs $y,y'\in \Gamma$ contained in the same edge of $\Gamma$. Therefore, it suffices to show that $(t_{x})_*A \sim_{\mathrm{alg}} (t_{x'})_*A$ for any pair of points $x,x'$ contained in the same edge of $\Gamma$. Let $\Gamma_x$ be the component of $\Gamma$ containing $x$. Even though $\Gamma_x$ is not smooth, it still determines a metric graph $G$. After a choice of weights that makes $\Gamma$ into a tropical $1$-cycle, the metric graph $G$ is equipped with weights $m\colon E(G)\to \Z_{>0}$ induced by the weights on $\Gamma$. Let $\widetilde G$ be the metric graph obtained from $G$ replacing each edge $e$ of $G$ by an edge of length $\ell(e)/m(e)$, where $\ell(e)$ denotes the length of $e$ in the metric graph $G$. If $\widetilde \Gamma$ denotes the smooth tropical curve associated to the graph $\widetilde G$ (see \S\ref{subsec:tropical curves}), then there is a natural morphism $f\colon\widetilde \Gamma \to \vert\Gamma_x\vert$ of rational polyhedral spaces, which is a bijection of the underlying spaces. Let $t,t'\in \widetilde \Gamma$ be the unique points with $f(t)=x$ and $f(t')=x'$. Now let
\[
g\colon X\times \widetilde\Gamma \to X\times \widetilde \Gamma ,\;\; (x, s)\mapsto (x+f(s), s) \ ,
\]
and denote $W=g_*(A\times [\Gamma])\in Z_*(X\times \Gamma)$. By construction, if $p\colon X\times \widetilde \Gamma \to X$ and $q\colon X\times \widetilde \Gamma\to \widetilde \Gamma$ denote the projections, we have
\[
p_*(q^*(t)\cdot W)= (t_x)_*(A) \quad \text{and} \quad p_*(q^*(t')\cdot W)= (t_{x'})_*(A) \ ,
\]
finishing the proof.
\end{proof}

\subsection{Homological equivalence}

\begin{defn}
Let $X$ be a closed rational polyhedral space. We say that two tropical cycles $A$ and $B$ are \emph{homologically equivalent}, if $\cyc(A)=\cyc(B)$. 
\end{defn}

\begin{example}
\label{ex:homological equivalence on curves}
Let $\Gamma$ be a compact and connected smooth tropical curve. By definition, we have $H_{0,0}(\Gamma)\cong H_0(\Gamma;\Z)\cong\Z$. It follows that the degree morphism $H_{0,0}(\Gamma) \to \Z$ is an isomorphism. Therefore, the homological equivalence class of a tropical $0$-cycle is uniquely determined by its degree. Let $D\in \CDiv(\Gamma)$ be a Cartier divisor on $\Gamma$. By Theorem \ref{thm:cycle class map commutes with operations}, we have $\cyc[D]= c_1(\mL(D))\frown[\Gamma]$, and by Poincar\'e duality this implies that $c_1(\mL(D)=0$ if and only if $\cyc[D]=0$. By what we just saw, we have $\cyc[D]=0$ if and only if the degree of $D$ is $0$. We see that if $D'\in\CDiv(\Gamma)$ is another Cartier divisor, then $[D]$ and $[D']$ are homologically equivalent if and only if $c_1(\mL(D))=c_1(\mL(D'))$, which holds if and only if $D$ and $D'$ have the same degree.
\end{example}

\begin{prop}
\label{prop:algebraic equivalence implies homological equivalence}
Algebraic equivalence implies homological equivalence: if $A$ and $B$ are tropical cycles on a closed  rational polyhedral space $X$ with $A\sim_{\mathrm{alg}} B$, then $A\sim_{\mathrm{hom}} B$.
\end{prop}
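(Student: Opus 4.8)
The plan is to reduce the statement to the defining generators of $R_{\mathrm{alg}}$ and to show that the tropical cycle class map $\cyc$ annihilates each such generator. Since $\cyc$ is a group homomorphism $Z_*(X) \to H_{*,*}(X)$, it suffices to prove that $\cyc$ vanishes on $R_{\mathrm{alg}}$; equivalently, that for every compact connected smooth tropical curve $\Gamma$, points $t_0, t_1 \in \Gamma$, and tropical cycle $W$ on $X \times \Gamma$, one has
\[
\cyc\bigl(p_*(q^*(t_0-t_1)\cdot W)\bigr) = 0 \ ,
\]
with $p, q$ the two projections. The strategy is to push all the operations through the cycle class map using Theorem \ref{thm:cycle class map commutes with operations}, landing in tropical homology, and then to observe that the relevant cohomology class coming from $t_0 - t_1$ is already zero because $t_0$ and $t_1$ have the same degree on $\Gamma$.

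First I would rewrite the pulled-back divisor intersection in a form to which Theorem \ref{thm:cycle class map commutes with operations} applies. The Cartier divisor $q^*(t_0 - t_1)$ on $X \times \Gamma$ has associated line bundle $\mL(q^*(t_0-t_1)) = q^*\mL(t_0 - t_1)$, so by the third identity of that theorem,
\[
\cyc\bigl(q^*(t_0-t_1)\cdot W\bigr) = c_1\bigl(q^*\mL(t_0-t_1)\bigr)\frown \cyc(W) \ .
\]
Next, applying the functoriality $\cyc(p_* -) = p_* \cyc(-)$ and the projection formula for the cap product (so that $p_*$ absorbs the pushed-forward class), the whole expression becomes a cap product in which the cohomology factor is the pullback $q^* c_1(\mL(t_0-t_1))$. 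The key point is then that $c_1(\mL(t_0 - t_1)) \in H^{1,1}(\Gamma)$ already vanishes: by Example \ref{ex:homological equivalence on curves}, since $t_0$ and $t_1$ both have degree $1$, the divisor $t_0 - t_1$ has degree $0$, whence $c_1(\mL(t_0-t_1)) = 0$. Pulling back and capping with zero yields zero, so $\cyc$ kills the generator.

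The main obstacle, and the step requiring the most care, is the manipulation of the mixed push-forward/pullback/cap-product expression: I need the compatibilities between $\cyc$, $q^*$ on cohomology, $c_1$, the cap product $\frown$, and $p_*$ to line up so that the class $c_1(\mL(t_0-t_1))$ on $\Gamma$ (rather than on $X \times \Gamma$) can be factored out and recognized as zero. Concretely this uses that $c_1$ commutes with pullback of line bundles, that the cycle class map intertwines divisor intersection with $c_1 \frown (-)$, and a projection-formula identity for $p_*$ and $\frown$. Each of these is either contained in or a formal consequence of Theorem \ref{thm:cycle class map commutes with operations} together with the cited properties of the operations from \cite{AF1}, so the argument should be short once the right identity is invoked; the only real work is bookkeeping to ensure the vanishing class lives on $\Gamma$ before pullback. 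An even cleaner route, which I would pursue if the projection formula is available in the required form, is to note that $q^*(t_0-t_1)$ is a principal divisor up to homological triviality on $\Gamma$ and hence its first Chern class pulls back from a zero class, bypassing $W$ entirely.
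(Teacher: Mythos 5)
Your proposal is correct and takes essentially the same route as the paper: reduce to a single generator of $R_{\mathrm{alg}}$, apply Theorem \ref{thm:cycle class map commutes with operations} to write the cycle class as $p_*\bigl(q^*c_1(\mL(t_0-t_1))\frown \cyc(W)\bigr)$, and observe that $c_1(\mL(t_0-t_1))=0$ because $t_0-t_1$ has degree zero (Example \ref{ex:homological equivalence on curves}). The projection-formula bookkeeping you flag as the main obstacle is not actually needed --- once the Chern class vanishes on $\Gamma$, its pullback to $X\times\Gamma$ is zero and the cap product collapses, which is precisely your ``cleaner route'' and is exactly what the paper does.
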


\begin{proof}
By the definition of algebraic equivalence, we may assume that there exists a compact and connected smooth tropical curve $\Gamma$, two points $t_0,t_1\in \Gamma$, and a tropical cycle $W$ on $X\times \Gamma$ such that $A-B= p_*(q^*(t_0-t_1)\cdot W)$. Since $t_0-t_1$ has degree $0$, we have $c_1(\mL(t_0-t_1))=0$ (see Example \ref{ex:homological equivalence on curves}). Therefore, by Theorem \ref{thm:cycle class map commutes with operations}, we have
\[
\cyc(A)-\cyc(B)=\cyc(A-B)=p_*\big(q^*c_1(\mL(t_0-t_1))\frown \cyc(W)\big) = 0 \ ,
\]
finishing the poof.
\end{proof}

\begin{thm}
\label{thm:cycle map commutes with intersection product}
Let $X$ be a real torus admitting a spanning curve, and let $A,B\in Z_*(X)$ be tropical cycles. Then we have
\begin{equation*}
\cyc(A\cdot B) = \cyc(A)\cdot \cyc(B) \ .
\end{equation*}
\end{thm}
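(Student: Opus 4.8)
The plan is to bootstrap from the one case in which compatibility is already available, namely intersection with a Cartier divisor, and to use the spanning curve to reduce the general situation to it. First I would record the base case: if $A=D_1\cdots D_r\cdot[X]$ is a complete intersection of Cartier divisors $D_1,\dots,D_r\in\CDiv(X)$, then iterating the third identity of Theorem~\ref{thm:cycle class map commutes with operations} gives $\cyc(A)=c_1(\mL(D_1))\smile\cdots\smile c_1(\mL(D_r))\frown\cyc[X]$, so that the cup product $c_1(\mL(D_1))\smile\cdots\smile c_1(\mL(D_r))$ is Poincar\'e dual to $\cyc(A)$. Applying the same identity to $D_1\cdots D_r\cdot B$ and unwinding the definition of the intersection product of homology classes, one finds $\cyc(A\cdot B)=c_1(\mL(D_1))\smile\cdots\smile c_1(\mL(D_r))\frown\cyc(B)=\cyc(A)\cdot\cyc(B)$. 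By bilinearity the same holds whenever $A$ is a $\Z$-linear combination of such complete intersections; this step uses neither the torus structure nor the spanning curve.

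The conceptual vehicle for the reduction is the diagonal $\delta\colon X\to X\times X$, for which $A\cdot B=\delta^{*}(A\times B)$. Since $\cyc$ is multiplicative for cross products by Theorem~\ref{thm:cycle class map commutes with operations}, and, as for singular homology, the intersection product of two homology classes equals the diagonal pullback $\delta^{!}$ of their cross product, the theorem is equivalent to the compatibility $\cyc(\delta^{*}C)=\delta^{!}\cyc(C)$ of the cycle class map with the diagonal pullback, for $C=A\times B$. The difficulty is that $\delta^{*}$ is not an intersection with globally defined Cartier divisors: although in a local chart $\cong\R^{n}$ the diagonal of $\R^{n}\times\R^{n}$ is the complete intersection of the $n$ tropical divisors $\max(x_i,y_i)$, these functions are not periodic and hence do not descend to $X\times X$.

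This is where the spanning curve enters, and where I expect the main work to lie. Choosing a generic translation vector $v$, I would replace $A$ by $(t_v)_*A$ so that $(t_v)_*A$ and $B$ meet properly. Both sides of the desired identity are unchanged by this replacement: on the one hand $\cyc((t_v)_*A)=\cyc(A)$, because $(t_v)_*A\sim_{\mathrm{alg}}A$ by Proposition~\ref{prop:translate is algebraically equivalent} and algebraic equivalence implies homological equivalence by Proposition~\ref{prop:algebraic equivalence implies homological equivalence}; on the other hand $(t_v)_*A\cdot B\sim_{\mathrm{alg}}A\cdot B$ by Proposition~\ref{prop:algebraic equivalence defines an ideal}, hence $\cyc((t_v)_*A\cdot B)=\cyc(A\cdot B)$ by the same implication. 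We may therefore assume that $A$ and $B$ intersect properly, so that $A\cdot B$ is supported on the expected-dimensional locus $|A|\cap|B|$ with explicit tropical multiplicities. In this proper situation the computations of $A\cdot B$, of $\cyc(A\cdot B)$, and of the local contributions to $\delta^{!}(\cyc(A)\times\cyc(B))$ are all local along $|A|\cap|B|$; passing to charts isomorphic to open subsets of $\R^{n}$, where the diagonal genuinely is a complete intersection of Cartier divisors, reduces the identity to the base case of the first paragraph. Making this local-to-global matching precise — identifying the tropical intersection multiplicity at a generic point of a component of $|A|\cap|B|$ with the local cap-product contribution, and checking that the globally defined classes assemble correctly — is the technical heart of the proof. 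The role of the spanning curve is precisely to secure, via translation invariance, the reduction to this proper case in which the globalization obstruction disappears.
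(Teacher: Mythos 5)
Your proposal follows essentially the same route as the paper: after reducing to pure-dimensional cycles, one uses the spanning curve together with Propositions \ref{prop:translate is algebraically equivalent}, \ref{prop:algebraic equivalence implies homological equivalence}, and \ref{prop:algebraic equivalence defines an ideal} to replace $A$ by a general translate meeting $B$ transversally, and then localizes to charts isomorphic to open subsets of $\R^n$, where both cycles are integer multiples of complete intersections of hyperplanes and the divisor-compatibility of $\cyc$ from Theorem \ref{thm:cycle class map commutes with operations} finishes the argument. The ``local-to-global matching'' you correctly flag as the technical heart is exactly what the paper supplies, by working in Borel--Moore homology with supports on $\vert A\vert$, $\vert B\vert$, and $\vert A\cap B\vert$, invoking Verdier duality, and using that these groups satisfy the sheaf axioms and are determined by their restriction to the regular locus for dimension reasons; your diagonal-map framing is a cosmetic difference from the paper's direct use of the cup/cap formalism.
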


\begin{proof}
As both sides are bilinear in $A$ and $B$, we may assume that $A$ and $B$ are pure-dimensional, say of dimensions $k$ and $l$, respectively. By Proposition \ref{prop:translate is algebraically equivalent} and Proposition \ref{prop:algebraic equivalence implies homological equivalence}, we may replace $A$ by a general translate. Therefore, we can assume that $A$ and $B$ meet transversally,  that is that $\vert A\vert \cap \vert B\vert$ is either empty or of pure dimension $k+l-n$, where $n=\dim(X)$, and $(\vert A\vert \cap \vert B\vert)^\reg = \vert A\vert ^\reg \cap \vert B \vert ^\reg$. 

As explained in \cite[Remark 5.5]{AF1}, we can view $\cyc(A)$ as an element of the Borel-Moore homology group $H_{k,k}^{BM}(\vert A\vert ,X)$ with supports on $\vert A\vert$, and similarly 
\begin{align*}
\cyc(B)&\in H_{l,l}^{BM}(\vert A\vert, X) \quad\text{ and} \\
\cyc(A\cdot B)&\in H^{BM}_{k+l-n,k+l-n}(\vert A\cap  B\vert, X) \ .
\end{align*}
Using Verdier duality \cite[Theorem D]{AF1}, the cycle class $\cyc(A)$ is Poincar\'e dual to a cohomology class with support on $\vert A\vert$, that is to an element in $H_{\vert A\vert}^{n-k,n-k}(X)$. Therefore, the intersection product $\cyc(A)\cdot \cyc(B)$ is also represented by an element in $H_{k+l-n,k+l-n}^{BM}(\vert A\cap B\vert, X)$ and it suffices to prove the equality
\[
\cyc(A\cdot B) = \cyc(A)\cdot \cyc(B)
\]
in $H_{k+l-n,k+l-n}^{BM}(\vert A\cap B, X)$. For dimension reasons, both sides are uniquely determined by their restrictions to $H_{k+l-n,k+l-n}^{BM}(\vert A\cap B\vert\cap U, U)$, where $U$ is an open subset of $X$ with $U\cap \vert A\cap B\vert = \vert A\cap B\vert^\reg$ \cite[Lemma 4.8 (b)]{AF1}. Combining the facts that $V\mapsto H_{k+l-n,k+l-n}^{BM}(\vert A\cap B\vert\cap V, V)$ satisfies the sheaf axioms \cite[Lemma 4.8 (b)]{AF1},  $X$ is locally isomorphic to open subsets of $\R^n$, and $(\vert A\vert \cap \vert B\vert)^\reg = \vert A\vert ^\reg \cap \vert B \vert ^\reg$ allows us to further reduce to the case where $U=\R^n$ and $A$ and $B$ are linear subspaces of $\R^n$. In this case, there exist hyperplanes $H_1,\ldots, H_{n-k}$ and  $H'_1\ldots, H'_{n-l}$, and integers $a,b\in\Z$  such that 
\begin{align*}
A&= a\cdot H_1\cdots H_{n-k} \quad \text{and} \\
B&= b\cdot H'_1\cdots H_{n-l} \ .
\end{align*}
Let $\alpha\in H^{n-k,n-k}_{\vert A\vert}(X)$ be the Poincar\'e dual to $\cyc(A)$. Applying \cite[Proposition 5.12]{AF1} (see also \cite[Remark 5.13]{AF1}) yields
\begin{multline*}
\cyc(A\cdot B) =  \cyc\big((a\cdot H_1\cdots H_{n-k})\cdot (b\cdot H'_1\cdots H_{n-l})\cdot [X]\big) = \\ 
=\big(a \cdot c_1(\mL(H_1))\smile \ldots \smile c_1(\mL(H_{n-k}))\big)\frown \\
\frown \Big(\big(b\cdot c_1(\mL(H'_1))\smile \ldots\smile  c_1(\mL(H'_{n-l}))\big)\frown \cyc[X]\Big) = \\
=\alpha\frown \cyc(B) =
\cyc(A)\cdot \cyc(B) \ ,
\end{multline*}
where the last equality holds by the definition of the intersection product of tropical homology classes. This finishes the proof.
\end{proof}

\subsection{Numerical equivalence}

\begin{defn}
Let $X$ be a closed tropical manifold. Then two tropical cycles $A,B\in Z_*(X)$ on $X$ are \emph{numerically equivalent}, for which we write $A\sim_{\mathrm{num}} B$, if for every tropical cycle $C\in Z_*(X)$ on $X$ we have
\[
\int_X A\cdot C=\int_X B\cdot C \ .
\]
\end{defn}

\begin{prop}
\label{prop:homological equivalence implies numerical equivalence}
Let $X$ be a real torus admitting a spanning curve, and let $A,B\in Z_*(X)$ with $A\sim_{\mathrm{hom}} B$. Then $A\sim_{\mathrm{num}}B$.
\end{prop}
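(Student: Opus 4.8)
The plan is to reduce the statement to the already-established compatibility between the cycle class map and the intersection product. Recall that numerical equivalence is tested by integrating against arbitrary tropical cycles $C$, so it suffices to show that if $\cyc(A)=\cyc(B)$, then $\int_X A\cdot C=\int_X B\cdot C$ for every $C\in Z_*(X)$. First I would fix such a $C$ and rewrite both integrals in terms of tropical homology. The key tool is Theorem \ref{thm:cycle map commutes with intersection product}, which applies precisely because $X$ is a real torus admitting a spanning curve: it gives $\cyc(A\cdot C)=\cyc(A)\cdot\cyc(C)$ and $\cyc(B\cdot C)=\cyc(B)\cdot\cyc(C)$, where the right-hand products are the intersection products on tropical homology defined via Poincar\'e duality.

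Next I would invoke the compatibility of degree with the cycle class map. As noted after Theorem \ref{thm:cycle class map commutes with operations}, for any tropical $0$-cycle $\alpha$ one has $\int_X\alpha=\int_X\cyc(\alpha)$. Since $A\cdot C$ and $B\cdot C$ are tropical $0$-cycles (after restricting to the relevant pure-dimensional components, so that the degree is defined), this lets me pass from the integrals of cycles to integrals of their homology classes:
\[
\int_X A\cdot C=\int_X\cyc(A\cdot C)=\int_X\big(\cyc(A)\cdot\cyc(C)\big) \ ,
\]
and likewise for $B$. Now the hypothesis $\cyc(A)=\cyc(B)$ feeds in directly: the two right-hand sides become $\int_X\big(\cyc(A)\cdot\cyc(C)\big)$ and $\int_X\big(\cyc(B)\cdot\cyc(C)\big)$, which are equal term by term. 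Chaining these equalities yields $\int_X A\cdot C=\int_X B\cdot C$, and since $C$ was arbitrary, this is exactly $A\sim_{\mathrm{num}}B$.

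I do not anticipate a serious obstacle here, since all the heavy lifting has been done in Theorem \ref{thm:cycle map commutes with intersection product}; the proof is essentially a formal chain of identities. The one point requiring a little care is bilinearity and dimension bookkeeping: the intersection product $A\cdot C$ need not be $0$-dimensional unless the dimensions are complementary, so strictly speaking the integral $\int_X A\cdot C$ is understood to pick out the $0$-dimensional part (equivalently, only the complementary-codimension components of $C$ contribute). Because both $\cyc$ and the intersection pairing are bilinear and respect the grading by dimension, I can reduce to the case where $A$, $B$, and $C$ are pure-dimensional with complementary dimensions, where everything is unambiguous, and then extend by bilinearity. With that understood, the argument goes through without further complications.
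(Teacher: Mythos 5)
Your argument is correct and is essentially identical to the paper's proof: both fix an arbitrary $C$, apply Theorem \ref{thm:cycle map commutes with intersection product} together with the compatibility of degree with the cycle class map, and chain the resulting equalities $\int_X A\cdot C=\int_X\cyc(A)\cdot\cyc(C)=\int_X\cyc(B)\cdot\cyc(C)=\int_X B\cdot C$. The dimension bookkeeping you mention is a reasonable point of care but does not change the substance.
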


\begin{proof}
Let $C\in Z_*(X)$. By Theorem \ref{thm:cycle map commutes with intersection product}, we have
\begin{multline*}
\int_X A\cdot C =\int _X\cyc (A\cdot C)= \int_X \cyc(A) \cdot \cyc(C) =\\
=\int_X \cyc(B)\cdot \cyc(C) =\int_X\cyc(B\cdot C)= \int_X B\cdot C \ ,
\end{multline*}
from which the assertion follows.
\end{proof}

\section{Tropical homology of real tori} \label{sec:homology of Abelian varieties}

Let $X=N_\R/\Lambda$ be a real torus. Then the group law and the tropical cross product endow the tropical homology groups with the additional structure of the Pontryagin product:

\begin{defn}
Let $X$ be a real torus with group law $\mu\colon X\times X \to X$. The \emph{tropical Pontryagin product} is defined as the pairing
\[
(\alpha,\beta) \mapsto \alpha\star \beta \coloneqq \mu_*(\alpha\times \beta) \ ,
\]
where $\alpha$ and $\beta$ are either elements of $Z_*(X)$ or of $H_{*,*} (X)$. We thus obtain morphisms 
\begin{align*}
\star\colon Z_i(X)\otimes_\Z Z_k(X) &\to Z_{i+k}(X) \\
\star\colon H_{i,j}(X)\otimes_\Z H_{k,l}(X) &\to H_{i+k,j+l}(X) \\
\end{align*}
for all choices of natural numbers $i,j,k,l$. It is not hard to see that $\star$ makes $Z_*(X)$ into a graded abelian group, and $H_{*,*}(X)$ into a bigraded abelian group.
\end{defn}

\begin{prop}
\label{prop:Pontryagin product commutes with cycle map}
Let $X$ be a real torus. Then the tropical cycle class map respects Pontryagin products, that is the diagram 
\begin{center}
\begin{tikzpicture}[auto]
\matrix[matrix of math nodes, row sep= 5ex, column sep= 4em, text height=1.5ex, text depth= .25ex]{
|(ZXX)| Z_i(X)\otimes_\Z Z_j(X) 	&
|(ZX)|	Z_{i+j}(X)	\\
|(HXX)| H_{i,i}(X)\otimes_\Z H_{j,j}(X)	&	
|(HX)| H_{i+j,i+j}(X)	\\
};
\begin{scope}[->,font=\footnotesize]
\draw (ZXX) --node{$\star$} (ZX);
\draw (HXX) --node{$\star$} (HX);
\draw (ZXX) -- node{$\cyc\otimes \cyc$} (HXX);
\draw (ZX) -- node{$\cyc$} (HX);
\end{scope}
\end{tikzpicture}
\end{center}
is commutative for all $i,j\in \Sigma$
\end{prop}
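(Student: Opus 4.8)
The goal is to show that the square relating the cycle-level Pontryagin product and the homology-level Pontryagin product commutes. The plan is to unwind both products into their defining pieces — the cross product followed by the push-forward along the group law $\mu$ — and then invoke the compatibility of $\cyc$ with each of these two operations separately, which is exactly what Theorem \ref{thm:cycle class map commutes with operations} provides.

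First I would recall that by definition $A\star B = \mu_*(A\times B)$ for tropical cycles, and $\alpha\star\beta = \mu_*(\alpha\times\beta)$ for homology classes. Thus, starting from a pair $(A,B)\in Z_i(X)\otimes_\Z Z_j(X)$, going right-then-down gives $\cyc(\mu_*(A\times B))$, while going down-then-right gives $\mu_*(\cyc(A)\times\cyc(B))$. So commutativity of the diagram amounts to the single identity
\[
\cyc(\mu_*(A\times B)) = \mu_*\big(\cyc(A)\times\cyc(B)\big) \ .
\]
To prove this I would apply the first line of Theorem \ref{thm:cycle class map commutes with operations} to the proper morphism $\mu\colon X\times X\to X$, which gives $\cyc(\mu_*(A\times B)) = \mu_*\cyc(A\times B)$. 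Then applying the second line of that theorem, $\cyc(A\times B)=\cyc(A)\times\cyc(B)$, and substituting yields the desired equality. Since everything is bilinear, it suffices to check the identity on pure-dimensional $A$ and $B$, so no further generality is lost.

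The only genuine point requiring care is the hypotheses of Theorem \ref{thm:cycle class map commutes with operations}: it is stated for \emph{closed} (i.e.\ compact) rational polyhedral spaces and for a \emph{proper} morphism. A real torus $X=N_\R/\Lambda$ is compact, so $X\times X$ is a closed rational polyhedral space, and the group law $\mu$ is a morphism between compact spaces, hence automatically proper. Thus both the push-forward compatibility and the cross-product compatibility apply directly. I expect this verification of hypotheses to be the main (and essentially only) obstacle, and it is a mild one. A secondary bookkeeping point is the grading: the cross product sends $H_{i,i}(X)\otimes H_{j,j}(X)$ into $H_{i+j,i+j}(X\times X)$ and $\mu_*$ preserves this bidegree, matching the target $H_{i+j,i+j}(X)$ in the diagram, so the degrees are consistent.

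Concretely, the proof reduces to the following chain of equalities, valid for pure-dimensional $A\in Z_i(X)$ and $B\in Z_j(X)$:
\[
\cyc(A\star B)=\cyc\big(\mu_*(A\times B)\big)=\mu_*\cyc(A\times B)=\mu_*\big(\cyc(A)\times\cyc(B)\big)=\cyc(A)\star\cyc(B) \ ,
\]
where the second and third equalities are the two relevant identities from Theorem \ref{thm:cycle class map commutes with operations} and the outer equalities are the definitions of the respective Pontryagin products. Bilinearity of both $\star$ operations and of $\cyc$ then extends the identity from pure-dimensional cycles to all of $Z_i(X)\otimes_\Z Z_j(X)$, establishing commutativity of the diagram.
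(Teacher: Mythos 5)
Your proof is correct and follows exactly the paper's argument: the Pontryagin product is by definition a cross product followed by push-forward along the group law, and the commutativity of the diagram follows from the compatibility of $\cyc$ with these two operations as stated in Theorem \ref{thm:cycle class map commutes with operations}. Your additional verification that the hypotheses (compactness of $X\times X$, properness of $\mu$) are satisfied is a sensible, if routine, supplement to the paper's one-line proof.
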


\begin{proof}
Since the Pontryagin product is defined as the push-forward of a cross product, this follows immediately from the compatibility of the tropical cycle class map with cross products and push-forwards stated in Theorem \ref{thm:cycle class map commutes with operations}.
\end{proof}

For the real torus $X=N_\R/\Lambda$, we will now describe the group $H_{*,*}(X)$ and the Pontryagin product on it explicitly. First we note that the  sheaf $\Omega^1_X$ is the constant sheaf $M_X$ associated to the lattice $M=\Hom(N,\Z)$, and since $X^\reg=X$, we have $\Omega^k_X\cong \left(\bigwedge^k M\right)_X$ for all integers $k$. By definition of singular tropical homology, we thus have a canonical graded  isomorphism
\[
H_{*,*}(X)\cong H_*\left(X;\bigwedge\nolimits^* N\right)\cong H_*(X;\Z)  \otimes_\Z \bigwedge\nolimits^* N .
\] 
The restriction of the Pontryagin product to the first factor $H_*(X;\Z)\cong H_{0,*}(X)$ is precisely the classical Pontryagin product one obtains when one views $X$ is a topological group. But, as a topological group, $X$ is a product of $1$-spheres. So using the K\"unneth theorem one sees that $H_*(X;\Z)$ is isomorphic to $\bigwedge H_1(X;\Z)$. This is, in fact, an isomorphism of rings, the multiplication of $H_*(X;\Z)$ being the Pontryagin product. Finally, because $X$ is the quotient of its universal covering space $N_\R$ by the action of $\Lambda$, we obtain a natural isomorphism $H_1(X;\Z)\cong \Lambda$. If a tropical $1$-cycle in $H_1(X;\Z)$ is represented by a loop $\gamma\colon [0,1]\to X$ then the corresponding element of $\Lambda$ is given by $\widetilde \gamma (1)-\widetilde \gamma(0)$ for any lift $\widetilde \gamma \colon [0,1]\to N_\R$ of $\gamma$ to the universal cover. We obtain an isomorphism
\begin{equation}
\label{equ:homology of real torus}
H_{*,*}(X) \cong \bigwedge\nolimits^* \Lambda \otimes_\Z \bigwedge\nolimits^* N \ .
\end{equation}
It is straightforward to check that with this identification, the tropical Pontryagin product on $H_{*,*}(X)$ satisfies 
\[
(\alpha \otimes \omega) \star (\beta\otimes \xi) = (\alpha\wedge \beta) \otimes (\omega\wedge \xi) \ .
\]

By a similar argument, one obtains a description for the tropical cohomology of $X$ that is dual to the description of tropical homology in \eqref{equ:homology of real torus}. More precisely, one sees that
\begin{equation}
\label{equ:cohomology fo real torus}
H^{*,*}(X) \cong \bigwedge\nolimits^* \Lambda^* \otimes_\Z \bigwedge\nolimits^* M \ ,
\end{equation}
and that, with this identification, the tropical cup product on $H^{*,*}(X)$ satisfies
\[
(\alpha \otimes \omega) \smile (\beta\otimes \xi) = (\alpha\wedge \beta) \otimes (\omega\wedge \xi) \ .
\]

With the descriptions of the tropical homology and the tropical cohomology given in (\ref{equ:homology of real torus}) and (\ref{equ:cohomology fo real torus}), the tropical cap product can also be expressed explicitly. More precisely, we have
\begin{equation}
\label{equ:identity for cap product on real tori}
(\alpha \otimes \omega) \frown (\beta\otimes \xi) = (\alpha\,\lrcorner\, \beta) \otimes (\omega\,\lrcorner\, \xi) \ ,
\end{equation}
where ``$\lrcorner$'' denotes the \emph{interior product} on the exterior algebra.

In bidegree $(1,1)$ our description of the tropical cohomology of $X$ produces an isomorphism
\[
H^{1,1}(X) \cong \Lambda^* \otimes_\Z M \ .
\]
We can further identify the right side with $\Hom(\Lambda\otimes_\Z N,\Z)$, that is with bilinear forms on $N_\R$ that have integer values on $\Lambda\times N$.

\begin{convention}
\label{conv:convention on H11 and bilinear forms}
From now on we will always identify, according to the identifications in this section, the cohomology group $H^{1,1}(N_\R/\Lambda)$ with the group of bilinear forms on $N_\R$ that have integer values on $\Lambda\times N$.
\end{convention}

\section{Line bundles on real tori}
\label{sec:line bundles and Appell-Humbert}

\subsection{Factors of automorphy}
\label{subsec:factors of automorphy}

Let $N$ be a lattice, let $\Lambda\subseteq N_\R$ be a lattice of full rank, and let $X=N_\R/\Lambda$ be the real torus associated to $N$ and $\Lambda$. To describe the tropical line bundles on $X$ we recall from \S\ref{subsec:line bundles} that they form a group, canonically identified with $H^1(X, \Aff_X)$. Invoking the results from \cite[Appendix to \S2]{Mumford08}, together with the fact that the pull-back $\pi^{-1}\Aff_X \cong \Aff_{N_\R}$ along the quotient morphism $\pi\colon N_\R\to N_\R/\Lambda=X$ has trivial cohomology on $N_\R$, we obtain the identification
\begin{equation*}
H^1(X, \Aff_X) \cong H^1(\Lambda, \Gamma(N_\R,\Aff_{N_\R})) \ ,
\end{equation*}
where the right side is the first group cohomology group of $\Gamma(N_\R,\Aff_{N_\R})$, equipped with its natural $\Lambda$-action.
This is very much akin to the case of complex tori: an element of $H^1(\Lambda, \Gamma(N_\R,\Aff_{N_\R}))$ can be represented by a \emph{tropical factor of automorphy}, that is a family of integral affine functions indexed by $\Lambda$, that, if we represent it as a function $a\colon \Lambda\times N_\R\to \R$, satisfies
\begin{equation}
\label{equ:group action condition}
a(\lambda+\mu,x)=  a(\lambda, \mu+x) + a(\mu, x)
\end{equation}
for all $\mu,\lambda\in  \Lambda$ and $x\in N_\R$. Two factors of automorphy represent the same element of $H^1(\Lambda, \Gamma(N_\R,\Aff_{N_\R}))$ if and only if they differ by a factor of automorphy of the form
\begin{equation*}
(\lambda,x)\mapsto l(x+\lambda)-l(x)
\end{equation*}
for some integral affine function $l\in \Gamma(N_\R,\Aff_{N_\R})$, which happens if and only if they differ by a factor of automorphy of the form
\[
(\lambda,x)\mapsto m_\R(\lambda) \ ,
\]
where $m_{\R}$ is the $\R$-linear extension of a linear form $m\colon N\to \Z$.

Any factor of automorphy $a(-,-)$ defines a group action $\lambda . (x, b) = (x+\lambda, b+ a(\lambda,x))$ of $\Lambda$ on the trivial line bundle $N_\R\times \R$ on $N_\R$. The tropical line bundle on $X$ corresponding to $a(-,-)$ is the quotient $(N_\R\times \R)/\Lambda$.

\subsection{The Appell--Humbert Theorem}
\label{subsec:Appell-Humbert}

It is easy to check that for every morphism $l\in \Hom(\Lambda, \R)$ and every symmetric bilinear form $E$ on $N_\R$ with $E(\Lambda\times N )\subseteq \Z$, the family of integral affine functions on $N_\R$ defined by 
\begin{equation*}
a_{E,l} (\lambda,x)= l(\lambda) - E(\lambda, x) - \frac 12 E(\lambda,\lambda)
\end{equation*}
is a tropical factor of automorphy. We denote the associated tropical line bundle on $X$ by $\mL(E,l)$. The following proposition shows that the first Chern class recovers $E$ from $\mL(E,l)$.

\begin{prop}
\label{prop:chern class recovers symmetric form}
Let $E$ be a symmetric bilinear form on $N_\R$ with $E(\Lambda\times N)\subseteq \Z$, and let $l\in \Hom(\Lambda,\R)$. Then $c_1(\mL(E,l))=E$, where we identify $H^{1,1}(X)$ with the group of bilinear forms on $N_\R$ with integer values on $\Lambda\times N$ according to Convention \ref{conv:convention on H11 and bilinear forms}.
\end{prop}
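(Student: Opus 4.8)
The plan is to compute the first Chern class $c_1(\mL(E,l)) = H^1(d)([\mL(E,l)])$ directly from the factor of automorphy $a_{E,l}$, and to match the result against the explicit identification of $H^{1,1}(X)$ with bilinear forms given in Convention \ref{conv:convention on H11 and bilinear forms}. Since $c_1$ is induced by the quotient map $d\colon \Aff_X \to \Omega^1_X$, and line bundles on $X$ are identified with classes in $H^1(\Lambda,\Gamma(N_\R,\Aff_{N_\R}))$ via \S\ref{subsec:factors of automorphy}, I expect $c_1$ to be computable at the level of group cohomology: applying $d$ to the factor of automorphy should produce a cocycle representing the class in $H^{1,1}(X)$. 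First I would observe that the linear term $l(\lambda)$ and the constant term $-\tfrac12 E(\lambda,\lambda)$ are both \emph{constant} in the variable $x$, hence lie in the kernel of $d$ (which kills $\R_X$), so only the bilinear term $-E(\lambda,x)$ survives after passing to $\Omega^1_X = M_X$.

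The key computation is therefore to identify the image of the cocycle $(\lambda,x)\mapsto -E(\lambda,x)$ under the isomorphism $H^1(\Lambda,\Gamma(N_\R,\Aff_{N_\R})) \to H^1(X,\Omega^1_X) \cong H^{1,1}(X)$. Applying $d$ sends the integral affine function $x\mapsto -E(\lambda,x)$ to its linear part, namely the element $-E(\lambda,-) \in M = \Hom(N,\Z)$ (this lands in $M$ precisely because $E(\Lambda\times N)\subseteq\Z$). This yields a group $1$-cocycle $\Lambda \to M$, $\lambda \mapsto -E(\lambda,-)$, which under $H^1(\Lambda, M_X)\cong \Lambda^*\otimes_\Z M$ corresponds to the element of $\Hom(\Lambda,M)\cong\Hom(\Lambda\otimes_\Z N,\Z)$ sending $\lambda\otimes n \mapsto -E(\lambda,n)$. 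Here I would use the identifications \eqref{equ:cohomology fo real torus} from \S\ref{sec:homology of Abelian varieties}, noting that $H^1(\Lambda, M)$ with the trivial $\Lambda$-action is just $\Hom(\Lambda, M)$.

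The remaining step is a sign/convention bookkeeping: I must verify that under Convention \ref{conv:convention on H11 and bilinear forms} the bilinear form recovered is exactly $E$ and not $-E$. This is where the main obstacle lies, since the identification of $H^{1,1}(X)$ with bilinear forms, together with the connecting isomorphism between group cohomology and sheaf cohomology, each carry a sign convention, and the factor of automorphy was written with a $-E(\lambda,x)$ term precisely to absorb one such sign. I would trace the two isomorphisms $H^1(X,\Aff_X)\cong H^1(\Lambda,\Gamma(N_\R,\Aff_{N_\R}))$ and $H^{1,1}(X)\cong \Lambda^*\otimes_\Z M$ carefully to confirm they are compatible in a way that cancels the minus sign, yielding $c_1(\mL(E,l))=E$. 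Once the sign is pinned down, the independence from $l$ is immediate from the first observation, completing the proof.
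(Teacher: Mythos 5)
Your overall strategy is sound and fairly close in spirit to the paper's: both arguments reduce at once to the observation that only the term $-E(\lambda,x)$ of the factor of automorphy survives the differential $d\colon\Aff_X\to\Omega^1_X$ (the terms $l(\lambda)$ and $-\tfrac12E(\lambda,\lambda)$ are constant in $x$), so that $c_1(\mL(E,l))$ is represented by the $l$-independent cocycle $\lambda\mapsto -E(\lambda,-)\in M$. Your route through group cohomology, $H^1(\Lambda,\Gamma(N_\R,\Aff_{N_\R}))\to H^1(\Lambda,M)\cong\Hom(\Lambda,M)$, is legitimate: the comparison isomorphisms from \cite[Appendix to \S 2]{Mumford08} are natural in the sheaf, and $\Lambda$ acts trivially on $M=\Gamma(N_\R,\Omega^1_{N_\R})$, so $H^1(\Lambda,M)=\Hom(\Lambda,M)$.

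The gap is in the step you label ``sign/convention bookkeeping'' and propose to ``confirm''. That step is not bookkeeping; it is the entire content of the proposition, and it is where the paper spends essentially all of its effort. What must be computed is the image of the class $\lambda\mapsto-E(\lambda,-)$ under the composite $H^1(\Lambda,M)\cong H^1(X,\Omega^1_X)\cong H^{1,1}(X)\cong\Lambda^*\otimes_\Z M$, where the last two identifications come from the \emph{singular} description of tropical cohomology and Convention \ref{conv:convention on H11 and bilinear forms} --- not from group cohomology --- so an explicit comparison between the two descriptions is unavoidable. The paper carries this out by representing the class as a \v{C}ech cocycle $x\mapsto -E(s_\beta(x)-s_\alpha(x))$ for a trivializing cover and chasing it through the \v{C}ech--singular double complex for the constant sheaf $N^*$; the zig-zag terminates in an explicit singular $1$-cochain whose value on a loop lifting to a path from $0$ to $\lambda$ is $-E(\lambda)$, and only at that point can one read off, via the stated conventions (which absorb the sign; cf.\ Remark \ref{rem:min convention}), that the class is $E$ rather than $-E$. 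Your proposal asserts that the minus sign will cancel but supplies no mechanism for the cancellation; without executing the comparison (in your setup, the comparison between group cohomology of $\Lambda$ and singular cohomology of the $K(\Lambda,1)$-space $X$, made explicit enough to track the sign against Convention \ref{conv:convention on H11 and bilinear forms}), the argument cannot distinguish $E$ from $-E$. Everything else in the proposal is fine.
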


\begin{proof}
Let $\mathfrak U=\{U_\alpha\}_\alpha$ be an open cover of $X$ such that each preimage $\pi^{-1}U_\alpha$ is a union of disjoint open subsets of $N_\R$ that map homeomorphically onto $U_\alpha$. For each $\alpha$, choose a continuous section $s_\alpha\colon U_\alpha\to \pi^{-1}U_\alpha$ of $\pi$. Furthermore, we choose a (necessarily non-continuous) section $s\colon X\to N_\R$ of $\pi$. By construction, the line bundle $\mL(E,l)$ is represented by the \v{C}ech cocycle
\begin{equation*}
(U_{\alpha,\beta}\ni x\mapsto a_{E,l}(s_\beta(x)-s_\alpha(x),s_\alpha(x))) \in \check C^1(\mathfrak U, \Aff_X) \ .
\end{equation*}
Note that $s_\beta-s_\alpha$ has values in $\Lambda$ and is therefore constant on the connected components of  $U_{\alpha,\beta}= U_\alpha\cap U_\beta$ by continuity. In particular, the functions $x\mapsto a_{E,l}(s_\beta(x)-s_\alpha(x),s_\alpha(x))$ are indeed integral affine. By definition, the first Chern class of $\mL(E,l)$ is represented by the \v{C}ech cocycle obtained by differentiating the transition functions for all $\alpha$ and $\beta$. Using the definition of $a_{E,l}$, it follows that $c_1(\mL(E,l))$ is represented by the cocycle
\begin{equation}
\label{equ:representative of Chern class}
(U_{\alpha,\beta}\ni x\mapsto -E(s_\beta(x)-s_\alpha(x))) \in \check C^1(\mathfrak U, \Omega^1_X) \ ,
\end{equation}
where we consider $E$ as a function $\Lambda\to N^*$. To compute what this corresponds to under the identification of $H^1(X,\Omega^1_X)$ with $H^1(X; N^*)\cong \Lambda^*\otimes N^*$, we consider the double complex
\begin{equation*}
(\check C^i(\mathfrak U,\mathcal C^j(X;N^*)), d_{ij}, \partial_{ij}) \ ,
\end{equation*}
where $\mathcal C^i(X;N^*)$ denotes the sheafification of the presheaf
\[
U\mapsto C^i(U;N^*) 
\]
and we set  $C^{-1} (U; N^*)= N^*$ and $\check C^{-1}(\mathfrak U, \mathcal F)= \Gamma(X,\mathcal F)$ for any sheaf $\mathcal F$.
We follow the cocycle of formula \ref{equ:representative of Chern class} through the double complex in the zig-zag from the $(1,-1)$ entry to the $(-1,1)$ entry indicated by the solid arrows in the following diagram: 
\begin{center}
\begin{tikzpicture}[auto]
\matrix[matrix of math nodes, row sep= 5ex, column sep= 3.5em, text height=1.5ex, text depth= .25ex]{
 & [-2.5em]
|(row-1)| 0 &[-1.em]
|(row0)| 0 &[-1.5em]
|(row1)| 0 & [-2.5em]
\\[-2.5ex]
|(col-1)| 0 &
|(-1-1)| N^* &  
|(-10)| C^0(X;N^*) &
|(-11)|	C^1(X;N^*) &
|(colph-1)| \cdots \\
|(col0)| 0 &
|(0-1)| \check C^0(\mathfrak U,(N^*)_X) &
|(00)| \check C^0(\mathfrak U,\mathcal C^0(X;N^*)) &
|(01)| \check C^0(\mathfrak U,\mathcal C^1(X;N^*)) &
|(colph0)| \cdots\\
|(col1)| 0 &
|(1-1)| \check C^1(\mathfrak U,(N^*)_X) &
|(10)| \check C^1(\mathfrak U,\mathcal C^0(X;N^*)) &
|(11)| \check C^1(\mathfrak U,\mathcal C^1(X;N^*)) &
|(colph1)| \cdots \\ [-1.5ex]
 &
|(rowph-1)| \vdots &
|(rowph0)|  \vdots &
|(rowph1)| \vdots  &
\\
};
\begin{scope}[->]
\draw (1-1)--(10);
\draw (00) --(10);
\draw (00) -- (01);
\draw (-11)--(01);
\end{scope}
\begin{scope}[dashed,->,font=\footnotesize]
\draw (col-1)--(-1-1);
\draw (col0) --(0-1);
\draw (col1)--(1-1);

\draw (-1-1)--(-10);
\draw (-10)--(-11);
\draw (0-1)--(00);
\draw (10) -- (11);

\draw (-11)--(colph-1);
\draw (01)--(colph0);
\draw (11)--(colph1);
\draw (row-1)--(-1-1);
\draw (row0)--(-10);
\draw (row1)--(-11);

\draw (-1-1)--(0-1);
\draw(0-1)--(1-1);
\draw (-10)--(00);
\draw (01)--(11);

\draw (1-1)--(rowph-1);
\draw (10)--(rowph0);
\draw (11)--(rowph1);
\end{scope}
\end{tikzpicture}
\end{center}
\vspace{-2ex}
First we apply the differential coming from singular cohomology and obtain
\begin{equation*}
((U_{\alpha,\beta}\xleftarrow{x} \{0\})\mapsto -E(s_\beta(x)-s_\alpha(x)) ) \in  \check C^1(\mathfrak U, \mathcal C^0(X;N^*)) \ .
\end{equation*}
Clearly, this is the image under the differential coming from \v{C}ech cohomology of the cochain
\begin{equation*}
((U_\alpha\xleftarrow{x}\{0\})\mapsto -E(s_\alpha(x)-s(x)) ) \in \check C^0(\mathfrak U, \mathcal C(X; N^*)) \ .
\end{equation*}
Applying the differential of singular cohomology again we obtain 
\begin{equation*}
((U_\alpha\xleftarrow{\sigma}[0,1]) \mapsto -E(s_\alpha(\sigma(1))-s(\sigma(1))-s_\alpha(\sigma(0))+s(\sigma(0))) ) \in \check C^0(\mathfrak U, \mathcal C^1(X; N^*)) \ .
\end{equation*}
This can be lifted to a singular $1$-cochain. Namely, for an arbitrary $1$-simplex $\sigma\colon [0,1]\to X$ we choose a lift $\sigma'\colon  [0,1] \to N_\R$ and then assign to $\sigma$ the value 
\[
-E(\sigma'(1)-s(\sigma(1))-\sigma'(0)+s(\sigma(0))) \ .
\]
This is clearly independent of the choice of $\sigma'$. In particular, if the image of $\sigma$ is contained in $U_\alpha$, we may choose $\sigma'=s_\alpha\circ \sigma$ and obtain the same cocycle on $U_\alpha$ as before. It is also clear that any loop in $X$ which is the image of a path in $N_\R$ from  $0$ to $\lambda\in\Lambda$ is mapped to $-E(\lambda)$ by this $1$-cochain. Therefore, we have $c_1(\mL(E,l))=E$ when identifying $H^{1,1}(X)$ with $\Lambda^*\otimes_\Z N^*$ according to Convention \ref{conv:convention on H11 and bilinear forms}.
\end{proof}

\begin{thm}[Tropical Appell--Humbert Theorem]
\label{thm:Appell-Humbert}
Let $\mL$ be a tropical line bundle on the real torus $X=N_\R/\Lambda$. Then there exists $l\in \Hom(\Lambda,\R)$ and a symmetric form $E$ on $N_\R$ with $E(\Lambda\times N)\subseteq \Z$ such that $\mL\cong \mL(E,l)$. Moreover, if we are given another choice of  $l'\in \Hom(\Lambda,\R)$ and symmetric form $E'$ on $N_\R$ with $E'(\Lambda\times N)\subseteq \Z$, then $\mL\cong \mL(E',l')$ if and only if $E=E'$ and the linear form $(l-l')_\R\colon N_\R\to \R$ has integer values on $N$.
\end{thm}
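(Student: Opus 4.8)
The plan is to reduce both halves of the statement to the description of line bundles by tropical factors of automorphy recalled in \S\ref{subsec:factors of automorphy}, and to match an arbitrary factor of automorphy against the explicit family $a_{E,l}$. For existence I would start with an arbitrary tropical line bundle, represented by a factor of automorphy $a\colon \Lambda\times N_\R\to\R$ satisfying \eqref{equ:group action condition}. Since each function $a(\lambda,-)$ is integral affine on the connected space $N_\R$, it is globally of the form $a(\lambda,x)=\langle b(\lambda),x\rangle + c(\lambda)$ with $b(\lambda)\in M=\Hom(N,\Z)$ and $c(\lambda)\in\R$. Substituting this into the cocycle condition and comparing the part linear in $x$ with the $x$-independent part, I expect to read off that $b\colon\Lambda\to M$ is a homomorphism and that $c(\lambda+\mu)=\langle b(\lambda),\mu\rangle+c(\lambda)+c(\mu)$.

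The key structural step is then to exploit $\lambda+\mu=\mu+\lambda$: applying this to the relation for $c$ forces $\langle b(\lambda),\mu\rangle=\langle b(\mu),\lambda\rangle$ for all $\lambda,\mu\in\Lambda$. I would use this to define a bilinear form by $E(y,x)=-\langle b_\R(y),x\rangle$, where $b_\R\colon N_\R\to M_\R$ is the $\R$-linear extension of $b$ (which exists because $\Lambda$ has full rank). The form $E$ visibly satisfies $E(\Lambda\times N)\subseteq\Z$, and the symmetry relation above shows $E$ is symmetric on $\Lambda\times\Lambda$, hence on all of $N_\R$ by bilinearity. Setting $l(\lambda):=c(\lambda)-\tfrac12\langle b(\lambda),\lambda\rangle$, a short computation using the relation for $c$ together with the symmetry of $b$ should show $l\in\Hom(\Lambda,\R)$, and one then checks directly that $a=a_{E,l}$, whence $\mL\cong\mL(E,l)$.

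For the uniqueness statement I would first invoke Proposition \ref{prop:chern class recovers symmetric form}: since $c_1(\mL(E,l))=E$, any isomorphism $\mL(E,l)\cong\mL(E',l')$ forces $E=E'$. Once $E=E'$, the difference of the two factors of automorphy is $a_{E,l}-a_{E,l'}\colon(\lambda,x)\mapsto(l-l')(\lambda)$, which depends only on $\lambda$ and is a homomorphism $\Lambda\to\R$. By the description of coboundaries in \S\ref{subsec:factors of automorphy}, this is trivial in $H^1(\Lambda,\Gamma(N_\R,\Aff_{N_\R}))$ if and only if it equals $m_\R(\lambda)$ for some integral linear form $m\colon N\to\Z$; since $\Lambda$ has full rank, this happens precisely when the $\R$-linear extension $(l-l')_\R$ takes integer values on $N$, giving the claimed criterion.

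The main obstacle I anticipate is the symmetry of $E$ and the passage from $\Lambda$ to all of $N_\R$: one must verify that the relation $\langle b(\lambda),\mu\rangle=\langle b(\mu),\lambda\rangle$, which a priori holds only on the lattice $\Lambda$, genuinely produces a \emph{symmetric} $\R$-bilinear form on $N_\R$, and that the coboundary bookkeeping in the uniqueness part correctly translates ``differs by $m_\R(\lambda)$'' into ``$(l-l')_\R(N)\subseteq\Z$.'' Both points rely crucially on $\Lambda$ being a full-rank lattice, so that $\R$-bilinear (resp.\ $\R$-linear) maps are determined by their values on $\Lambda$.
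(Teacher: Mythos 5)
Your proposal is correct and follows essentially the same route as the paper: decompose the factor of automorphy into its linear part (yielding the bilinear form $E$, with symmetry extracted from commutativity of $\Lambda$ via the cocycle condition) and its constant part (yielding $l$), then settle uniqueness by combining Proposition~\ref{prop:chern class recovers symmetric form} with the description of coboundaries in \S\ref{subsec:factors of automorphy}. Your explicit formula $l(\lambda)=c(\lambda)-\tfrac12\langle b(\lambda),\lambda\rangle$ is just an unwound version of the paper's definition of $l$ as the constant family $a-a_{E,0}$, and both verifications check out.
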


\begin{proof}
We have already seen in \S\ref{subsec:factors of automorphy} that there exists a tropical factor of automorphy $a\colon \Lambda\times N_\R\to \R$ such that $\mL$ is the line bundle associated to $a(-,-)$. For every $\lambda\in \Lambda$, the function $a(\lambda,-)$ is integral affine, hence its differential $E(\lambda)\coloneqq -da(\lambda,-)$ defines an element in $\Hom(N,\Z)$. Differentiating \eqref{equ:group action condition}, we see that the map $\lambda \mapsto E(\lambda)$ is linear. In other words, $E$ defines a bilinear map on $\Lambda\times N\to \Z$. Therefore, for a suitable function $b\colon \Lambda\to \R$, we have $a(\lambda,x)= -E(\lambda,x) +b(\lambda)$ for all $\lambda\in \Lambda$ and $x\in N_\R$. Plugging this into \eqref{equ:group action condition}, we see that $E(\lambda,\mu)=E(\mu,\lambda)$ for all $\lambda,\mu\in \Lambda$, that is that $E$ is, in fact, symmetric. The tropical factor of automorphy $a-a_{E,0}$ is then a family of constant functions, that is we have
\[
(a-a_{E,0})(\lambda, x)=l(\lambda)
\]
for some function $l\colon \Lambda\to \R$.
Applying \eqref{equ:group action condition} once more we see that $l$ is, in fact, linear. It follows that $a=(a-a_{E,0})+ a_{E,0}=a_{E,l}$. In particular, we have $\mL\cong \mL(E,l)$.

Now assume we are given a second choice of linear function $l'\in \Hom(\Lambda,\R)$ and symmetric form $E'$ on $N_\R$ with $E'(\Lambda\times N)\subseteq \Z$ such that $\mL(E',l')\cong \mL$. We have already seen in \S\ref{subsec:factors of automorphy} that this happens if and only if  $a_{E,l}-a_{E',l'}$ is of the form $a_{0,m_\R\vert_\Lambda}$ for some linear function $m\colon N\to \Z$. By Proposition \ref{prop:chern class recovers symmetric form}, we have
\[
E'=c_1(\mL(E',l'))=c_1(\mL(E,l))= E \ .
\]
Therefore, we have $a_{E,l}-a_{E',l'}= a_{0,l-l'}$ and it follows that $(l-l')_\R$ has integer values on $N$. 
\end{proof}

\begin{rem}
It follows directly from the tropical Appell-Humbert theorem that there is a bijection between the group of all tropical line bundles with trivial first Chern class and $\Lambda^*_\R/N^*$, which is called the dual real torus to $X$ for that reason.
\end{rem}

\subsection{Translations of line bundles}

\begin{prop}
\label{prop:translations of line bundles}
Let $X=N_\R/\Lambda$ be a real torus, let $l\in \Hom(\Lambda, \R)$, and let $E$ be a symmetric bilinear form on $N_\R$ with $E(\Lambda\times N)\subseteq \Z$. Furthermore, let $\pi\colon N_\R\to X$ be the projection, and let $y\in N_\R$. Then we have 
\[
t_{\pi(y)}^*\mL(E,l)\cong \mL(E,l-E(-,y)) \ .
\] 
In particular, if the bilinear form $E$ is nondegenerate and $\mL'$ is any line bundle on $N_\R/\Lambda$ with $c_1(\mL')=E$, then there exists $x\in X$ such that $\mL'\cong t^*_{x}\mL(E,l)$. If, moreover, $E$ restricts to a perfect pairing $\Lambda\times N\to \Z$, then $x$ is unique.
\end{prop}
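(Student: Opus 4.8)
The plan is to derive all three assertions from the explicit factor of automorphy $a_{E,l}$ together with the Appell--Humbert theorem (Theorem~\ref{thm:Appell-Humbert}) and the Chern class computation (Proposition~\ref{prop:chern class recovers symmetric form}). I would begin with the translation formula by a direct computation. The translation $t_{\pi(y)}$ lifts along $\pi$ to the genuine translation $\tilde t_y\colon N_\R\to N_\R$, $x\mapsto x+y$, which is $\Lambda$-equivariant; pulling back the $\Lambda$-equivariant trivial bundle defining $\mL(E,l)$ along $\tilde t_y$ therefore replaces the factor of automorphy $a_{E,l}(\lambda,x)$ by $a_{E,l}(\lambda,x+y)$. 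Expanding
\[
a_{E,l}(\lambda,x+y)=l(\lambda)-E(\lambda,x)-E(\lambda,y)-\tfrac12 E(\lambda,\lambda)=a_{E,\,l-E(-,y)}(\lambda,x),
\]
where I use that $E$ is symmetric so that $\lambda\mapsto E(\lambda,y)$ is the restriction to $\Lambda$ of the linear form $E(-,y)$, yields $t_{\pi(y)}^*\mL(E,l)\cong\mL(E,l-E(-,y))$ at once.

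For the surjectivity statement I would start from an arbitrary $\mL'$ with $c_1(\mL')=E$. The Appell--Humbert theorem writes $\mL'\cong\mL(E',l'')$, and Proposition~\ref{prop:chern class recovers symmetric form} forces $E'=c_1(\mL')=E$, so $\mL'\cong\mL(E,l'')$ for some $l''\in\Hom(\Lambda,\R)$. By the translation formula it then suffices to find $y\in N_\R$ with $\mL(E,l-E(-,y))\cong\mL(E,l'')$, and by the uniqueness clause of Appell--Humbert this reduces to making the linear form $(l''-l+E(-,y))_\R$ vanish on $N$ modulo $N^*=\Hom(N,\Z)$. Here nondegeneracy of $E$ enters: it makes $N_\R\to\Hom(N,\R)$, $y\mapsto E(-,y)|_N$, an isomorphism, so I can choose $y$ with $E(-,y)|_N=(l-l'')_\R|_N$ exactly, whereupon $(l''-l+E(-,y))_\R$ vanishes on $N$. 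Setting $x=\pi(y)$ gives $\mL'\cong t_x^*\mL(E,l)$.

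Finally, for uniqueness I would show that the assignment $x\mapsto t_x^*\mL(E,l)$ is injective. If $\pi(y)$ and $\pi(y')$ give isomorphic pullbacks, then the translation formula and Appell--Humbert give $E(m,y'-y)\in\Z$ for all $m\in N$; writing $z=y'-y$ and using symmetry, this says $E(z,-)|_N\in N^*$. When $E$ restricts to a perfect pairing $\Lambda\times N\to\Z$, the map $\Lambda\to N^*$, $\lambda\mapsto E(\lambda,-)|_N$, is an isomorphism, so there is $\lambda_0\in\Lambda$ with $E(\lambda_0-z,-)|_N=0$; nondegeneracy of $E$ (which the perfect pairing guarantees after tensoring with $\R$) then forces $z=\lambda_0\in\Lambda$, whence $\pi(y)=\pi(y')$.

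The main obstacle I expect is getting the bookkeeping of the first step exactly right --- in particular pinning down that pullback by a translation replaces $x$ by $x+y$ (rather than $x-y$) in the factor of automorphy, and verifying that the symmetry of $E$ is precisely what lets one absorb the extra term $E(\lambda,y)$ into a change of the linear part $l$. Once the translation formula is correctly established, the remaining two parts are essentially linear algebra over $\R$, with nondegeneracy of $E$ supplying existence and the perfect-pairing hypothesis upgrading this to uniqueness.
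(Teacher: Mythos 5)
Your proposal is correct and follows essentially the same route as the paper: lift the translation to the $\Lambda$-equivariant map $\widetilde t_y$ on $N_\R$, compute that the pulled-back factor of automorphy is $a_{E,l}(\lambda,x+y)=a_{E,\,l-E(-,y)}(\lambda,x)$ using the symmetry of $E$, and then deduce the existence and uniqueness statements from the Appell--Humbert theorem together with the (non)degeneracy hypotheses on $E$. The only cosmetic difference is that you phrase the solvability step modulo $N^*$ before solving exactly, whereas the paper directly picks $\widetilde x$ with $l-l'=E(-,\widetilde x)$; the substance is identical.
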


\begin{proof}
We recall from above that $\mL(E,l)$ can be defined as the quotient of the trivial bundle $N_\R\times \R$ by the $\Lambda$-action given by $\lambda.(x, b) = (x+\lambda, b+ a_{(E,l)}(\lambda,x))$.
Since the morphism $\widetilde t_y \colon N_\R\to N_\R \;\; x\mapsto x+y$ that induces $t_{\pi(y)}$ on the quotient $N_\R/\Lambda$ is $\Lambda$-equivariant, the pull-back $t_{\pi(y)}^*\mL(E,l)$ can be represented as the quotient of 
\[
{\widetilde {t}_y}^* (N_\R\times \R) \cong N_\R\times \R
\]
by the pulled back $\Lambda$-action. The action of $\lambda\in\Lambda$ on $(x,b)$ under the pulled back action is obtained by first applying $\widetilde t_y$ to the first coordinate, yielding $(x+y,b)$, then applying the $\Lambda$-action defined by $a_{(E,l)}$, yielding $(x+y+\lambda, b+a_{(E,l)}(\lambda,x+y))$, and finally applying $\widetilde t_y^{-1}$ to the first coordinate, yielding $(x+\lambda, b+a_{(E,l)}(\lambda,x+y))$. So in total, the pulled back action is given by
\begin{multline*}
\lambda.(x,b)=(x+\lambda,b+a_{(E,l)}(\lambda,x+y)) =\left(x+\lambda, b+ l(\lambda) - E(\lambda, x+y) - \frac 12 E(\lambda,\lambda)\right) \\= \left(x+\lambda, b+ l(\lambda)- E(\lambda,y) - E(\lambda, x)- \frac 12  E(\lambda,\lambda)\right)
= (x+\lambda, b+ a_{(E,l-E(-,y))}) \ .
\end{multline*}
which is precisely the action on the trivial bundle defined by the factor of automorphy $a_{(E,l-E(-,y))}$. This shows that $t^*_{\pi(y)}\mL(E,l) = \mL(E, l-E(-,y))$.

Now assume that $E$ is nondegenerate and that $\mL'$ is any line bundle on $X$ with $c_1(\mL')=E$. By Theorem \ref{thm:Appell-Humbert} and Proposition \ref{prop:chern class recovers symmetric form}, there exists a linear form $l'\colon \Lambda \to \R$ such that $\mL'\cong \mL(E,l')$. Since $E$ is nondegenerate and $\Lambda_\R\cong N_\R$, there exists $\widetilde x\in N_\R$ such that $l-l'= E(-,\widetilde x)$. By what we have shown above, we have 
\[
\mL'\cong \mL(E, l- E(-,\widetilde x))\cong t^*_{\pi(\widetilde x)} \mL(E,l)= t^*_{x} \mL(E,l) \ ,
\]
where $x=\pi(\widetilde x)$. If $x'\in X$ is another point such that $t^*_{x'}\mL(E,l)\cong \mL'$, and $\widetilde x'\in N_\R$ is chosen such that $\pi(\widetilde x')=x'$, then we have
\[
\mL(E, l-E(-,\widetilde x))\cong \mL(E, l- E(-,\widetilde x'))
\]
by what we have shown above. This happens if and only if $E(-,\widetilde x-\widetilde x')$ has integer values on $N$ by Theorem \ref{thm:Appell-Humbert}. If $E$ restricts to a perfect pairing on $\Lambda\times N$, this happens if and only if $\widetilde x-\widetilde x'\in \Lambda$, that is if and only if $x=x'$.
\end{proof}

\begin{rem}
If we call two line bundles on a real torus \emph{tropically equivalent} if they have the same first Chern class, then Proposition \ref{prop:translations of line bundles} shows that two tropical line bundles which are translates of each other are tropically equivalent, with the converse being true if their first Chern class is nondegenerate. This is completely analogous to the situation on complex tori, where two line bundles are \emph{analytically equivalent} if they have the same first Chern class \cite[Proposition 2.5.3]{bila}. If two line bundles on a complex torus are translates of each other, then they are analytically equivalent, with the converse being true if their first Chern class is nondegenerate \cite[Corollary 2.5.4]{bila}.
\end{rem}

\subsection{Rational sections of line bundles.}\label{subsec:rational sections of bundles}
Let $E\colon \Lambda\times N\to \Z$ be bilinear such that $E_\R$ is a symmetric bilinear form on $N_\R$, and let  $l\colon \Lambda \to \R$ be linear. As mentioned above, the tropical line bundle $\mL(E,l)$ on $X$ is a quotient of the trivial bundle $N_\R\times \R$ by the $\Lambda$-action defined by $E$ and $l$. In particular, the global rational sections of $\mL(E,l)$ are precisely those global rational sections of $N_\R\times \R$ that are invariant under the $\Lambda$-action. More precisely, the global rational sections of $\mL(E,l)$ are in bijection with the piecewise linear function $\phi\in \Gamma(N_\R, \mM_{N_\R})$ such that 
\begin{equation}
\label{equ: periodicity of rational functions}
\phi(x+\lambda)= \phi(x)+l(\lambda)-E(\lambda,x)-\frac 12 E(\lambda,\lambda) \ .
\end{equation}
The divisor associated to the section of $\mL(E,l)$ corresponding to $\phi$ is precisely the quotient of $\divv(\phi)$ by the $\Lambda$-action. In particular, this divisor is effective if and only if $\divv(\phi)$ is effective, that is if $\phi$ is concave. Together, concavity and (\ref{equ: periodicity of rational functions}) put strong constraints on $\phi$, or rather its Legendre transform.  In fact, it has been shown in \cite[Theorem 5.4]{MZjacobians} that  if $E$ is a perfect pairing and $E_\R$ is positive definite, these constraints completely determine $\phi$ up to an additive constant. More precisely, $\phi$ is given by
\[
\phi(x)=\min\left\{E(\lambda,x)+\frac 12 E(\lambda,\lambda)- l(\lambda) ~\middle\vert ~\lambda \in \Lambda \right\} + \mathrm{const}
\]
in this case (note that this only differs from the formula in  \cite{MZjacobians} because we are using the ``$\min$''-convention, see Remark \ref{rem:min convention}). By the tropical Appell-Humbert theorem it follows that for every line bundle $\mL$ on $X$ with $c_1(\mL)=E$ there exists a unique effective divisor $D\in \CDiv(X)$ with $\mL(D)=\mL$.

\begin{prop}
\label{prop:effective divisors with positive non-degenerate Chern class}
Let $X=N_\R/\Lambda$ be the real torus associated to a pair of lattices $N$ and $\Lambda\subset N_\R$, and let $D,D'\in \CDiv(X)$  be two effective divisors such that $\cyc[D]=\cyc[D']$ is Poincar\'e dual to $E\in H^{1,1}(X)$ for some perfect pairing $E\colon \Lambda\times N\to \Z$ such that $E_\R$ is a positive definite symmetric bilinear form on $N_\R$, where we identify $H^{1,1}(X)$ with $\Hom(\Lambda,N^*)$ according to Convention \ref{conv:convention on H11 and bilinear forms}. Then there exits a unique $x\in X$ such that $t_x^*D=D'$. 
\end{prop}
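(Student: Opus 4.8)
The plan is to reduce the statement to two facts already in hand: the uniqueness of the effective divisor representing a line bundle of Chern class $E$ (established at the end of \S\ref{subsec:rational sections of bundles}), and the classification of translates in Proposition \ref{prop:translations of line bundles}. The only real work is to convert the hypothesis, which is phrased in terms of cycle classes, into a statement about first Chern classes.

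First I would carry out exactly that conversion. Applying the third identity of Theorem \ref{thm:cycle class map commutes with operations} with $A=[X]$ (note $\cyc[X]$ makes sense, since a real torus is a tropical manifold and hence has a fundamental cycle) gives $\cyc[D]=c_1(\mL(D))\frown\cyc[X]$, which says precisely that $\cyc[D]$ is Poincar\'e dual to $c_1(\mL(D))$, and similarly for $D'$. Since Poincar\'e duality is an isomorphism and both $\cyc[D]$ and $\cyc[D']$ are assumed Poincar\'e dual to $E$, I conclude $c_1(\mL(D))=c_1(\mL(D'))=E$.

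Next I would produce the point $x$. By Theorem \ref{thm:Appell-Humbert} together with Proposition \ref{prop:chern class recovers symmetric form} I may write $\mL(D)\cong\mL(E,l)$ for some $l\in\Hom(\Lambda,\R)$. As $E$ is nondegenerate and $c_1(\mL(D'))=E$, Proposition \ref{prop:translations of line bundles} furnishes a point $x\in X$ with $\mL(D')\cong t_x^*\mL(D)$, and this $x$ is \emph{unique} with this property because $E$ restricts to a perfect pairing $\Lambda\times N\to\Z$. Since $t_x$ is an isomorphism, $t_x^*D$ is again an effective Cartier divisor with $\mL(t_x^*D)\cong t_x^*\mL(D)\cong\mL(D')$, and its first Chern class is again $E$ (for instance from the identity $t_x^*\mL(E,l)\cong\mL(E,l-E(-,y))$ in Proposition \ref{prop:translations of line bundles}). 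Thus $t_x^*D$ and $D'$ are two effective divisors whose associated line bundle has first Chern class $E$, with $E$ a perfect pairing and $E_\R$ positive definite; by the uniqueness of the effective divisor representing such a line bundle (the last sentence of \S\ref{subsec:rational sections of bundles}, which relies on \cite[Theorem 5.4]{MZjacobians}) I get $t_x^*D=D'$. Uniqueness of $x$ is then immediate: if also $t_{x'}^*D=D'$, then $t_{x'}^*\mL(D)\cong\mL(D')\cong t_x^*\mL(D)$, and the uniqueness clause of Proposition \ref{prop:translations of line bundles} forces $x=x'$.

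I expect the main (and essentially only) subtle point to be the first step, matching the hypothesis ``$\cyc[D]$ is Poincar\'e dual to $E$'' with ``$c_1(\mL(D))=E$''; one must make sure the notion of Poincar\'e dual in the hypothesis agrees with the one appearing in Theorem \ref{thm:cycle class map commutes with operations} and that $E$ is read consistently as an element of $H^{1,1}(X)$ via Convention \ref{conv:convention on H11 and bilinear forms}. Everything afterward is a direct assembly of Proposition \ref{prop:translations of line bundles} with the uniqueness-of-effective-divisor statement, with effectivity of $t_x^*D$ following from $t_x$ being an isomorphism.
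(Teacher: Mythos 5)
Your proposal is correct and follows essentially the same route as the paper: convert the Poincar\'e-duality hypothesis into $c_1(\mL(D))=c_1(\mL(D'))=E$ via Theorem \ref{thm:cycle class map commutes with operations}, invoke Proposition \ref{prop:translations of line bundles} for the unique $x$ with $t_x^*\mL(D)\cong\mL(D')$, and conclude $t_x^*D=D'$ from the uniqueness (up to additive constant) of the concave section of a line bundle whose Chern class is a positive definite perfect pairing. The only cosmetic difference is that you route the last step through the summary sentence at the end of \S\ref{subsec:rational sections of bundles} while the paper phrases it directly in terms of the two concave sections differing by a constant.
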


\begin{proof}
We have 
\[
\cyc[D]=\cyc(D\cdot [X])=c_1(\mL(D))\frown \cyc[X] , 
\]
so $\cyc[D]$ is Poincar\'e dual to $c_1(\mL(D))$, and similarly $\cyc[D']$ is Poincar\'e dual to $c_1(\mL(D'))$. By assumption, it follow that
\[
c_1(\mL(D))=c_1(\mL(D'))=E \ .
\]
By Proposition \ref{prop:translations of line bundles}, there exists a unique point $x\in X$ such that 
$t_x^*(\mL(D))\cong \mL(t_x^*(D))$ is isomorphic to $\mL(D')$. It follows that the two divisors $t_x^*(D)$ and $D'$ correspond to two concave rational sections of $\mL(D')$. But, since $c_1(\mL(D'))=E$, these two rational sections differ by a constant. Therefore, $D'=t_x^*(D)$.
\end{proof}

\section{Tautological cycles on tropical Jacobians}
\label{sec:geometric cycles}
Classically, the ring of  tautological classes on the Jacobian of an algebraic curve is the smallest subring of its Chow group that contains the image of the curve under the Abel-Jacobi map and is invariant under intersection products, Pontryagin products, translations, and the involution map. We will now introduce the most important tropical tautological cycles on a tropical Jacobian.

Throughout this section, $\Gamma$ will denote a compact connected smooth tropical curve of genus $g$. We will also fix a base point $q\in \Gamma$ with respect to which we define the Abel--Jacobi map.

\subsection{Effective loci and semibreak divisors}
Using the group structure on the Jacobian, the Abel--Jacobi map induces morphisms $\AJ_q^d\colon \Gamma^d\to \Jac(\Gamma)$ for all nonnegative integers $d$. 
\begin{defn}
For every integer $0\leq d \leq g$ we define
\[
\widetilde W_d\coloneqq \AJ_q^d(\Gamma^d) \ .
\]
\end{defn}

Because $\AJ_q^d$ is a proper morphism of \boundaryless{} rational polyhedral spaces, we know that $\tW_d$ is an at most $d$-dimensional \boundaryless{} rational polyhedral subspace of $\Jac(\Gamma)$. By definition, $(\AJ^d_q)_*[\Gamma^d]$ is a tropical $d$-cycle on $\tW_d$. Note that this does not mean that $\tW_d$ has dimension $d$ or that it is pure-dimensional as $(\AJ^d_q)_*[\Gamma^d]$ could be $0$. All we can say a priori is that the support of $(\AJ^d_q)_*[\Gamma^d]$ is precisely the subset of points of $\tW_d$ where the local dimension of $\tW_d$ is equal to $d$.

To show that $\tW_d$ in fact is purely $d$-dimensional we will use the the identification of $\Jac(\Gamma)$ with the $\Pic^0(\Gamma)$ given by the tropical Abel--Jacobi theorem \cite{MZjacobians}. Here, $\Pic(\Gamma)$ denotes the quotient of $\CDiv(\Gamma)$ by the subgroup consisting of all principal divisors, and $\Pic^d(\Gamma)$ denotes the subgroup of $\Pic(\Gamma)$ consisting of the all classes of divisors of degree $d$. The statement of the tropical Abel--Jacobi theorem is that the Abel--Jacobi map $\AJ_q$ induces a bijections $\Pic^d(\Gamma)\to \Jac(\Gamma)$ for $d=0$, and hence for any $d$. If $W_d$ denotes the preimage of $\tW_d$ in $\Pic^d(\Gamma)$ under the bijection $\Pic^d(\Gamma)\to \Jac(\Gamma)$, then $W_d$ is precisely the set of the classes of effective divisors of degree $d$. In particular $W_d$ is independent of the base point $q$. Together with L.\ T\'othm\'er\'esz, we have proved the following theorem.

\begin{thm}[{\cite[Theorem 8.3]{semibreak}}]
\label{thm:pure dimensionality}
The subset $W_d$ of $\Pic^d(\Gamma)$ is purely $d$-dimensional.
\end{thm}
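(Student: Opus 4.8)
The plan is to reduce the statement to a local lower bound on dimension and then to parametrize $W_d$ explicitly by combinatorially distinguished effective divisors. Since $\tW_d=\Phi_q^d(\Gamma^d)$ is already known to be a polyhedral set of dimension at most $d$, and $W_d\cong\tW_d$, it suffices to prove that every class $[D]\in W_d$ has local dimension exactly $d$. I would stress at the outset that this cannot be obtained formally from the fact that $\Gamma^d$ is pure of dimension $d$ and that $\Phi_q^d$ is proper: proper images of pure-dimensional polyhedral sets are in general \emph{not} pure-dimensional, so the combinatorial geometry of $\Gamma$ has to enter in an essential way.

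The tool I would use is the break-divisor theory of Mikhalkin and Zharkov together with a degree-$d$ refinement. Recall that the degree-$g$ break divisors form a pure $g$-dimensional polyhedral subset of $\Gamma^g$ which maps homeomorphically onto $\Jac(\Gamma)=W_g$, assembled from parallelepiped cells indexed by spanning trees of a model of $\Gamma$. For $d<g$ I would introduce the \emph{semibreak divisors} of degree $d$: effective divisors $D$ of degree $d$ satisfying a deficiency condition that relaxes the break-divisor inequality $\deg(D|_{\Gamma'})\ge g(\Gamma')$ (over all connected closed subgraphs $\Gamma'$, with $g(\Gamma')$ the first Betti number) by $g-d$; equivalently, those divisors obtained from a degree-$g$ break divisor by deleting $g-d$ chips situated on the cycles. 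These carry a natural polyhedral structure whose maximal cells are $d$-dimensional parallelepipeds, obtained by sliding $d$ independent chips along edges.

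With this setup the argument splits into two claims. The first is \emph{existence}: every effective divisor class of degree $d$ has a semibreak representative, so that the semibreak locus surjects onto $W_d$ via $\Phi_q$. The second is \emph{no dimensional collapse}: the resulting map from the (pure $d$-dimensional) semibreak locus onto $W_d$ is generically injective and drops dimension on no cell. Granting both, every point of $W_d$ lies in the image of a $d$-dimensional cell, which yields local dimension $\ge d$ at every point; combined with the a priori bound $\le d$ recorded for $\tW_d$, this gives pure dimension $d$.

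The main obstacle is the second claim. A semibreak cell maps to a lower-dimensional image exactly when sliding its chips fails to move the divisor class independently, a phenomenon controlled by the rank of the divisor and, concretely, by whether a sliding direction is absorbed into a linear equivalence. I would control this by induction on the first Betti number, realized by deleting an edge lying on a cycle: this decreases $g$ by one and relates the degree-$d$ effective locus of $\Gamma$ to those of the smaller graph, the base case being $g=0$, where necessarily $d=0$ and $W_0=\{0\}$ is trivially pure of dimension $0$. Propagating effectivity and the reduced-divisor normal form through this induction is the technical heart, and it is precisely here that the break-divisor tiling is used, to ensure that distinct semibreak cells meet only along their boundaries and therefore cannot conspire to lower the dimension of the image.
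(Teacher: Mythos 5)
This statement is not proved in the paper at all: it is imported verbatim from \cite[Theorem 8.3]{semibreak}, so the only in-paper material to compare against is the definition of (semi)break divisors in \S\ref{sec:geometric cycles} and the differential computation in the proof of Proposition \ref{prop:i! times W_i}. Your overall strategy --- parametrize $W_d$ by semibreak divisors and then check that the Abel--Jacobi map neither misses any class nor collapses the $d$-dimensional semibreak cells --- is indeed the strategy of that reference. But as written your argument has a genuine gap at its first step. The claim that every effective class of degree $d\leq g$ contains a semibreak representative is precisely the main theorem of \cite{semibreak}, established there by a delicate chip-moving argument; it does not follow formally from the degree-$g$ break divisor theory by ``deleting $g-d$ chips,'' because if one chooses an auxiliary effective $E$ of degree $g-d$ and takes the break representative $B$ of the class of $D+E$, there is no reason for $B$ to dominate an effective representative of the class of $D$. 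Labelling this ``the first claim: existence'' leaves essentially all of the hard content of the theorem unproved. (Relatedly, your ``deficiency condition'' description of semibreak divisors is a characterization that itself requires proof, not an interchangeable definition.)

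Your second step is also not in the right shape. For purity one does not need generic injectivity or any control of how distinct cells overlap; one only needs that $\AJ_q^d$ is an immersion on each maximal semibreak cell, and this has a direct proof which this paper records inside the proof of Proposition \ref{prop:i! times W_i}: such a cell is supported on $d$ distinct open edges $e_1,\dots,e_d$ with $\Gamma\setminus\bigcup_k e_k$ connected, so for each $k$ there is a cycle through $e_k$ avoiding the other $e_l$, whence the flow functionals $(d\AJ_q)(\eta_k)\in\Omega_\Z(\Gamma)^*$ are linearly independent. Every class in $W_d$ then has a representative in the closure of such a cell and so lies in the closure of a $d$-dimensional subset of $W_d$, giving local dimension $\geq d$ everywhere; combined with the a priori bound $\leq d$ this is the whole argument. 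Your proposed induction on the first Betti number by deleting an edge is both unnecessary and underspecified: deleting an edge changes the Jacobian, the linear equivalence relation, and the effective loci in ways your sketch does not control, and the base case $g=0$ only covers $d=0$, so the inductive step would have to treat all $d\leq g$ at once.
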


It follows immediately that  $\widetilde W_d$ is purely $d$-dimensional as well, and hence that the tropical cycle $(\AJ^d_q)_*[\Gamma^d]$ has support $\widetilde W_d$. We will now show that $\tW_d$ has a fundamental cycle $[\tW_d]$ which we will relate to $(\AJ^d_q)_*[\Gamma^d]$. To do this, we will need the notion of a break and semibreak divisors.  A \emph{break divisor} on $\Gamma$ is an effective divisor $B$ such that there exist $g$ open edge segments $e_1,\ldots,e_g\subseteq \Gamma$ and points $q_i\in \overline e_i$ such that $\Gamma\setminus \bigcup_i e_i$ is contractible and $B=\sum_i (q_i)$. A \emph{semibreak divisor} is an effective divisor that is dominated by a break divisor, that is an effective divisor $D$ such that there exists an effective divisor $E$ for which $D+E$ is a break divisor (cf. \cite{semibreak}).

\begin{prop}
\label{prop:i! times W_i}
Let $0\leq d\leq g$. Then $\tW_d$ has a fundamental cycle $[\widetilde W_d]$, and the equality
\[
(\AJ^d_q)_*[\Gamma^d]= d! [\widetilde W_d] 
\]
hold in $Z_*(\Jac(\Gamma))$. 
\end{prop}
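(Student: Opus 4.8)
The plan is to compute the pushforward $(\AJ^d_q)_*[\Gamma^d]$ (where $[\Gamma^d]=[\Gamma]^{\times d}$ is the fundamental cycle of the $d$-fold power) via the multiplicity formula for proper pushforwards recalled in \S\ref{subsec:tropical cycles}, and to deduce both assertions at once. Since $\tW_d$ is purely $d$-dimensional by Theorem \ref{thm:pure dimensionality} and, as already observed, $(\AJ^d_q)_*[\Gamma^d]$ is a tropical $d$-cycle with support exactly $\tW_d$, it is enough to show that its weight equals $d!$ on the regular part of every facet of $\tW_d$. Indeed, granting this, $(\AJ^d_q)_*[\Gamma^d]$ equals $d!$ times the extension by zero of the constant function $1$ on $\tW_d^\reg$; as the left-hand side is balanced and the balancing condition is homogeneous, the weight-$1$ function is itself balanced and thus defines the fundamental cycle $[\tW_d]$. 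The equality $(\AJ^d_q)_*[\Gamma^d]=d!\,[\tW_d]$ then follows immediately.

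To compute the generic weight I would fix a generic point $y$ in a facet of $\tW_d$ and view it as a class in $W_d\subseteq \Pic^d(\Gamma)$. The fiber $(\AJ^d_q)^{-1}\{y\}$ consists of the ordered $d$-tuples $(p_1,\dots,p_d)$ for which $\sum_i(p_i)$ is an effective divisor in the class $y$. The essential input is the theory of semibreak divisors of \cite{semibreak}, from which the pure-dimensionality statement is itself derived: for generic $y$ the class has a \emph{unique} effective representative $\sum_{i=1}^d(q_i)$, and this divisor is reduced, with the $q_i$ distinct points lying in the interiors of $d$ distinct edges $e_1,\dots,e_d$ that are contained in the complement of some spanning tree of $\Gamma$. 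Since the $q_i$ are distinct, the fiber is exactly the set of $d!$ orderings of $(q_1,\dots,q_d)$.

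It remains to verify that each of these $d!$ points contributes multiplicity $|\coker d_x \AJ^d_q|=1$. At such a point $x=(p_1,\dots,p_d)$ the integral tangent space of $\Gamma^d$ is $\bigoplus_i \Z\eta_i$, with $\eta_i$ the primitive tangent vector along $e_i$, and by the computation of $d\AJ_q$ in \S\ref{sec:tropical Jacobians} the differential sends $\eta_i$ to the flow-evaluation functional $\delta_i\in\Omega_\Z(\Gamma)^*$ on the edge $e_i$. Hence the multiplicity is the index of $\Z\delta_1+\cdots+\Z\delta_d$ in the saturated lattice $(\R\delta_1+\cdots+\R\delta_d)\cap\Omega_\Z(\Gamma)^*$. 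Now, because $e_1,\dots,e_d$ lie in the complement of a spanning tree, the corresponding fundamental cycles $C_1,\dots,C_d$ extend to a $\Z$-basis of $H_1(\Gamma;\Z)\cong\Omega_\Z(\Gamma)$, and under this identification $\delta_i$ is dual to $C_i$, since each fundamental cycle traverses its own cotree edge once and meets no other. Thus $\delta_1,\dots,\delta_d$ form part of a $\Z$-basis of $\Omega_\Z(\Gamma)^*$, so they generate a saturated sublattice and the index is $1$.

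The main obstacle is precisely this input from semibreak divisor theory: that a generic point of each facet of $\tW_d$ has a \emph{unique}, reduced effective representative supported on $d$ edges lying in the complement of a spanning tree. Once this genericity statement is in hand, the remaining ingredients — the pushforward multiplicity formula, the cokernel computation, and the homogeneity of the balancing condition — are routine. In writing this up I would extract the precise form of the needed statement from \cite{semibreak}.
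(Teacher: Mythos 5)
Your proposal is correct and follows essentially the same route as the paper: reduce to computing the generic multiplicity, identify the fiber over a generic point with the $d!$ orderings of the unique effective representative (the input from \cite{semibreak}), and verify that each of these points contributes with multiplicity one because the flow functionals $\delta_1,\dots,\delta_d$ attached to $d$ cotree edges span a saturated sublattice of $\Omega_\Z(\Gamma)^*$. The only cosmetic differences are that the paper derives the ``good position'' of the supporting edges (distinctness, connectedness of the complement) from the non-injectivity of $d\AJ_q^d$ on the bad components of $(\Gamma^d)^\reg$ rather than packaging it into the genericity statement, and that it verifies saturation by realizing arbitrary integer flows on the cotree edges instead of exhibiting the dual basis to the fundamental circuits.
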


\begin{proof}
It suffices to show that $(\AJ_q^d)_*[\Gamma^d]$ has weight $d!$ on all components of $\tW_d^\reg$. Indeed, if that is the case then $\frac 1{d!}(\AJ_q)_*[\Gamma^d]$ is a tropical cycle with support $\tW_d$ and weight $1$ on all components of $\tW_d^\reg$. But this implies that $\tW_d$ has a fundamental cycle and that $(\AJ^d_q)_*[\Gamma^d]=d![\tW_d]$. 

By the definition of the push-forward, we now have to show that for any $x\in \tW_d$ such that $(\AJ_q^d)^{-1}\{x\}$ is finite and contained in $(\Gamma^d)^\reg$, the value of $(\AJ^d_q)_*[\Gamma^d]$ at $x$  is $d!$. 
Let $\sigma$ be a component of $(\Gamma^d)^\reg$. Then there exist open edges $e_1,\ldots,e_d$ of $\Gamma$ such that $\sigma=e_1\times\ldots\times e_d$. We choose an orientation on each of these $d$ edges. This determines a unique primitive tangent vector $\eta_k$ on each each edge $e_k$.
These $d$ tangent vectors form a basis of the integral tangent space of the product $e_1\times \ldots\times e_d$.  As already noted in \S\ref{sec:tropical Jacobians}, the image of $\eta_k$ in the tangent space $\Omega_\Z(\Gamma)^*$ of $\Jac(\Gamma)$ is given by 
\[
(d\AJ_q)(\eta_k)\colon\Omega_\Z(\Gamma)\to \Z,\;\;\omega\mapsto \langle \omega|_{e_k}, \eta_k\rangle \ .
\]
If we identify $\Omega_\Z(\Gamma)$ with integral flows on $\Gamma$, as explained in Remark \ref{rem:flows}, then $(d\AJ_q)(\eta_k)$ is the map assigning to an integral flow $\omega$ on $\Gamma$ its flow on $e_k$ in the direction specified by the chosen orientation.
Because $\AJ_q^d$ is defined as the $d$-fold sum of $\AJ_q$, we have $(d\AJ_q^d)(\eta_k)=(d\AJ_q)(\eta_k)$.
In particular, if $e_k=e_l$ for $k\neq l$, then $(d\AJ_q^d)(\eta_k)= (d\AJ_q^d) (\eta_l)$ which means that $\AJ_q^d$ is not injective on $\sigma$ and $x\notin \AJ_q^d(\sigma)$. We may thus assume that all $e_k$ are distinct. If $\Gamma\setminus \bigcup e_k$ is disconnected, then there exists an $1\leq l \leq d$ such that $\Gamma\setminus \cup_{k=1}^l e_k$ has precisely two components $C_1$ and $C_2$. For $1\leq k\leq l$ let $\alpha_k$ be equal $1$ if $e_k$ is oriented such that it leads from $C_1$ to $C_2$, and let $\alpha_k$ be equal to $-1$ if it is oriented the other way. Since the total flow from $C_1$ to $C_2$ in any integral flow on $\Gamma$ is $0$, we have
\[
\sum_{k=1}^l \alpha_k  (d\AJ_q^d)(\eta_k) = 0 \ ,
\]
which means that $d\AJ_q^d$ is not injective on the tangent spaces of $\sigma$. Therefore, $\AJ_q^d$ is not injective on $\sigma$ and again $x\notin \AJ_q^d(\sigma)$. If $\Gamma\setminus \bigcup e_k$ is connected, then for each $1\leq k\leq d$ there is a simple closed loop in $\Gamma$ that passes through $e_k$ but not through $e_l$ for $l\neq k$. It follows that for every assignment of values $f\colon\{1,\ldots, d\}\to \Z$ there is an integral flow $\omega\in \Omega_\Z(\Gamma)$ whose flow on $e_k$ is $f(k)$. This implies that the vectors $(d\AJ_q^d)(\eta_1),\ldots, (d\AJ_q^d)(\eta_i)$ span a saturated rank-$i$ sublattice of $\Omega_\Z(\Gamma)^*$. Therefore, every point of $(\AJ_q^d)^{-1}\{x\}\cap \sigma$ contributes to the weight of $(\AJ^d_q)_*[\Gamma^d]$ with multiplicity one, and by \cite[Lemma 8.1]{semibreak} there is at most one of these points. In fact, if  $(\AJ_q^d)^{-1}\{x\}\cap \sigma$ is nonempty, then \cite[Lemma 8.1]{semibreak} tells us that all other components $\sigma'$ of $(\Gamma^d)^\reg$ with $(\AJ_q^d)^{-1}\{x\}\cap \sigma'\neq\emptyset$ are obtained from $\sigma$ via a permutation of coordinates. As there are exactly $d!$ of these permutations, the weight at $x$ is $d!$, finishing the proof.
\end{proof}

As an immediate consequence of Proposition \ref{prop:i! times W_i} we obtain the following corollary.

\begin{cor}
\label{cor:Wi as Pontryagin product}
The equality of tropical cycles $\star_{k=1}^d [\tW_1]=d! [\tW_d]$ holds in $Z_*(\Jac(\Gamma))$.
\end{cor}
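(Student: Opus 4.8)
The strategy is to reduce the statement to Proposition~\ref{prop:i! times W_i} by identifying the iterated Pontryagin product $\star_{k=1}^d[\tW_1]$ with the push-forward $(\AJ_q^d)_*[\Gamma^d]$. The starting observation is the case $d=1$ of Proposition~\ref{prop:i! times W_i}, which gives $[\tW_1]=(\AJ_q)_*[\Gamma]$. I would then unwind the definitions of the two objects I wish to compare and show they agree on the nose as tropical cycles in $Z_*(\Jac(\Gamma))$; the numerical factor $d!$ then comes for free from Proposition~\ref{prop:i! times W_i}.

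First I would recall that $\AJ_q^d$ is, by construction, the $d$-fold sum of $\AJ_q$, i.e.\ it factors as
\[
\AJ_q^d = \mu_d\circ (\AJ_q)^{\times d}\colon \Gamma^d\to \Jac(\Gamma)^d\to \Jac(\Gamma),
\]
where $(\AJ_q)^{\times d}=\AJ_q\times\cdots\times\AJ_q$ and $\mu_d\colon \Jac(\Gamma)^d\to \Jac(\Gamma)$ denotes the $d$-fold group law. Next I would use two standard functoriality properties of tropical cycles (as in \cite{AllerRau,FRIntersection}): that the fundamental cycle of a product is the cross product of the fundamental cycles, so $[\Gamma^d]=[\Gamma]^{\times d}$, and that push-forward is compatible with cross products, i.e.\ $(f\times g)_*(A\times B)=(f_*A)\times(g_*B)$. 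Combining these with functoriality of push-forward yields
\[
(\AJ_q^d)_*[\Gamma^d]=(\mu_d)_*\big((\AJ_q)^{\times d}\big)_*[\Gamma]^{\times d}=(\mu_d)_*\big((\AJ_q)_*[\Gamma]\big)^{\times d}=(\mu_d)_*[\tW_1]^{\times d}.
\]

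Finally I would observe that, by associativity of the group law and functoriality of push-forward, the right-hand side $(\mu_d)_*[\tW_1]^{\times d}$ is exactly the iterated Pontryagin product $\star_{k=1}^d[\tW_1]$, since $\alpha\star\beta=\mu_*(\alpha\times\beta)$ by definition and $\mu_d=\mu\circ(\mu\times\id)\circ\cdots$. Putting everything together gives $\star_{k=1}^d[\tW_1]=(\AJ_q^d)_*[\Gamma^d]=d!\,[\tW_d]$ by Proposition~\ref{prop:i! times W_i}, as claimed.

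The proof is essentially a bookkeeping argument, so there is no deep obstacle; the only points that require a moment's care are verifying that the iterated Pontryagin product genuinely matches the total group-law push-forward $(\mu_d)_*$ (this is where associativity of $\mu$ and functoriality of $(-)_*$ enter) and that the fundamental-cycle and cross-product identities ($[\Gamma^d]=[\Gamma]^{\times d}$ and compatibility of $(-)_*$ with $\times$) hold at the level of cycles rather than just cycle classes. Once these are in place, the factor $d!$ is supplied directly by Proposition~\ref{prop:i! times W_i}.
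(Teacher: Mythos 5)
Your proposal is correct and follows essentially the same route as the paper, whose proof is a one-line appeal to Proposition~\ref{prop:i! times W_i} together with the fact that $\AJ_q^d$ is the $d$-fold sum of $\AJ_q$; you have simply spelled out the bookkeeping (the factorization $\AJ_q^d=\mu_d\circ(\AJ_q)^{\times d}$, compatibility of push-forward with cross products, and $[\Gamma^d]=[\Gamma]^{\times d}$) that the paper leaves implicit.
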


\begin{proof}
This follows directly from the formulas for $[\tW_d]$ and $[\tW_1]$ given in Proposition \ref{prop:i! times W_i}, and the fact that $\AJ_q^d$ is the $d$-fold sum of $\AJ_q$.
\end{proof}

We have a morphism $H_1(\Gamma;\Z)\to H_1(\Jac(\Gamma);\Z)$ induced by the (continuous) Abel--Jacobi map. As noticed in \S\ref{sec:homology of Abelian varieties}, there is a natural identification 
\[
H_1(\Jac(\Gamma);\Z)\cong H_1(\Gamma;\Z)
\]
 coming from the fact that $\Jac(\Gamma)=\Omega_\R(\Gamma)^*/H_1(\Gamma;\Z)$ is defined by taking a quotient of a real vector space by $H_1(\Gamma;\Z)$. 

\begin{lemma}
\label{lem:pushforward is identity}
The morphism
\[
(\AJ_q)_*\colon H_1(\Gamma;\Z)\to H_1(\Jac(\Gamma);\Z)\cong H_1(\Gamma;\Z)
\]
is the identity.
\end{lemma}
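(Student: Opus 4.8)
The plan is to compute $(\AJ_q)_*$ directly on a loop representing a homology class, using the explicit description of the identification $H_1(\Jac(\Gamma);\Z)\cong H_1(\Gamma;\Z)$ recalled in \S\ref{sec:homology of Abelian varieties}. Since $(\AJ_q)_*$ is a group homomorphism and $H_1(\Gamma;\Z)$ is generated by classes of loops, it suffices to verify the assertion on an arbitrary loop $\gamma\colon[0,1]\to\Gamma$ with $\gamma(0)=\gamma(1)$.

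First I would recall the two identifications in play. On the one hand, $\Lambda=H_1(\Gamma;\Z)$ sits inside $N_\R=\Omega_\R(\Gamma)^*$ via the integration pairing \eqref{equ:integration pairing}, so the class $[\gamma]$ corresponds to the functional $\omega\mapsto\int_\gamma\omega$. On the other hand, by the description in \S\ref{sec:homology of Abelian varieties}, the element of $\Lambda$ corresponding to $(\AJ_q)_*[\gamma]\in H_1(\Jac(\Gamma);\Z)$ is $\widetilde\phi(1)-\widetilde\phi(0)$ for any lift $\widetilde\phi\colon[0,1]\to\Omega_\R(\Gamma)^*$ of the loop $\AJ_q\circ\gamma$ to the universal cover.

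The key step is to produce such a lift explicitly. Fixing a path $\sigma$ in $\Gamma$ from the base point $q$ to $\gamma(0)$, I would define
\[
\widetilde\phi(t)\colon\Omega_\R(\Gamma)\to\R,\qquad \omega\mapsto \int_\sigma\omega+\int_{\gamma|_{[0,t]}}\omega \ .
\]
This $\widetilde\phi$ is continuous, since integration over a continuously varying path depends continuously on the endpoint, and because the concatenation of $\sigma$ with $\gamma|_{[0,t]}$ is a path from $q$ to $\gamma(t)$, its image in $\Jac(\Gamma)=\Omega_\R(\Gamma)^*/H_1(\Gamma;\Z)$ is exactly $\AJ_q(\gamma(t))$ by the very definition of the Abel--Jacobi map. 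Hence $\widetilde\phi$ is a genuine lift of $\AJ_q\circ\gamma$. The contributions of $\sigma$ at $t=0$ and $t=1$ cancel, leaving
\[
\widetilde\phi(1)-\widetilde\phi(0)=\Bigl(\omega\mapsto\int_\gamma\omega\Bigr) \ ,
\]
which is precisely the image of $[\gamma]$ under the embedding $\Lambda\hookrightarrow\Omega_\R(\Gamma)^*$. Thus $(\AJ_q)_*[\gamma]$ corresponds to $[\gamma]\in H_1(\Gamma;\Z)=\Lambda$, and $(\AJ_q)_*$ is the identity.

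I expect the only delicate point to be verifying that $\widetilde\phi$ really is a lift, that is, that it is continuous and satisfies $\pi\circ\widetilde\phi=\AJ_q\circ\gamma$ for the covering map $\pi$; this amounts to carefully unwinding the definition of $\AJ_q$ together with the presentation $\Jac(\Gamma)=\Omega_\R(\Gamma)^*/H_1(\Gamma;\Z)$. Everything else, including the reduction to loops and the cancellation of the $\sigma$-contributions, is formal.
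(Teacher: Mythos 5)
Your proof is correct and follows essentially the same route as the paper: both produce an explicit lift of $\AJ_q\circ\gamma$ to $\Omega_\R(\Gamma)^*$ by integrating over initial segments of the loop and identify $(\AJ_q)_*[\gamma]$ with the endpoint difference of that lift. The only cosmetic difference is that the paper first uses the Hurewicz theorem to represent the class by a loop based at $q$, whereas you handle an arbitrary loop by prepending a path $\sigma$ from $q$ to $\gamma(0)$ whose contributions cancel; these are interchangeable.
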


\begin{proof}
Let $\alpha$ be a cycle on $\Gamma$ representing a class in $H_1(\Gamma;\Z)$. We need to show that $(\AJ_q)_*[\alpha]=[\alpha]$. By the Hurewicz theorem, we may assume that it is represented by a loop $\gamma\colon [0,1]\to \Gamma$ starting and ending at the base point $q$. By the definition of the Abel--Jacobi map, the path
\begin{equation*}
\widetilde\gamma \colon [0,1]\to \Omega_\R(\Gamma)^*,\;\; t\mapsto \left( \omega \mapsto \int_{ \gamma|_{[0,t]}} \omega \right)
\end{equation*}
lifts the composite $\AJ_q\circ \gamma$. Therefore, $(\AJ_q)_*\gamma\in H_1(\Jac(\Gamma);\Z)$ is identified with the element $\widetilde\gamma(1)-\widetilde\gamma(0)=\widetilde\gamma(1)\in H_1(\Gamma;\Z)$. But this is equal to the image of $\gamma$ under the embedding $H_1(\Gamma;\Z)\hookrightarrow \Omega_\R(\Gamma)^*$. 
\end{proof}

\subsection{The tropical Riemann theta divisor} \label{subsec:Theta divisor}

Recall from \S\ref{sec:tropical Jacobians} that the tropical Jacobian $\Jac(\Gamma)=\Omega_\R(\Gamma)^*/ H_1(\Gamma;\Z)$ of a smooth tropical curve $\Gamma$ comes equipped with a positive definite symmetric form $Q$ on its universal cover $\Omega_\R(\Gamma)^*$ which restricts to a perfect pairing $\Omega_\Z(\Gamma)^*\times H_1(\Gamma;\Z)\to \Z$. By Proposition \ref{prop:chern class recovers symmetric form}, the first Chern class of the line bundle $\mL(Q,0)$ is given by $Q$. As explained in \S\ref{subsec:rational sections of bundles}, this implies that $\mL(Q,0)$ has, up to an additive constant, a unique concave rational section, the \emph{Riemann theta function}, which defines a unique effective divisor $\Theta\in\CDiv(\Jac(\Gamma))$ with $\mL(\Theta)= \mL(Q,0)$. For further details about the Riemann theta function see \cite{MZjacobians}, and see \cite{FRSS} for the connection to the non-archimedean Riemann theta function.

\begin{defn}
The unique effective divisor $\Theta\in \CDiv(\Jac(\Gamma))$ with $\mL(\Theta)=\mL(Q,0)$ is called the \emph{tropical Riemann theta divisor} on $\Jac(\Gamma)$.
\end{defn}

Note that by construction, we have $c_1(\mL(\Theta))=Q$.

\begin{example}
Figure \ref{fig:Theta and W1} shows the $\Theta$-divisor for the curve $\Gamma$ from Example \ref{example:Theta and W1}. It is the image in $\Jac(\Gamma)$ of the boundaries of the Voronoi cells of the lattice points $H_1(\Gamma;\Z)$ in $\Omega_\R(\Gamma)^*$ with respect to the metric defined by $Q$.
\end{example}

\section{The tropical Poincar\'e formula}
\label{sec:the formula}

We are finally in a position to prove the Poincar\'e formula. Our strategy is to give explicit formulas for both sides of the equation. More precisely, we will introduce coordinates on the tropical homology groups of the tropical Jacobian, and will compare the coefficients of both sides of the equation in these coordinates. Throughout this section, $\Gamma$ will denote a compact and connected smooth tropical curve of genus $g$, and  $e_1,\ldots, e_g$ will denote distinct open edges of $\Gamma$ such that $\Gamma\setminus (\bigcup_k e_k)$ is contractible. Furthermore, we will assume that we have chosen an orientation on each of the edges $e_1,\ldots, e_g$.

\subsection{Bases for the tropical (co)homology of $\Jac(\Gamma)$}
\label{subsec:generators for homology}
Recall from \S\ref{sec:homology of Abelian varieties} that there is an isomorphism of rings
\[
H_{*,*}(\Jac(\Gamma))\cong \bigwedge H_1(\Gamma;\Z) \otimes \bigwedge \Omega_\Z(\Gamma)^* \ ,
\]
where the ring structure on the left side is given by the Pontryagin product. 
Using this isomorphism, a choice of bases for $H_1(\Gamma;\Z)$ and $\Omega_\Z(\Gamma)^*$ will induce a basis for $H_{*,*}(\Jac(\Gamma))$. We will use our choice of open edges $e_1,\ldots, e_g$ to define bases for these lattices. Let $1\leq k\leq g$. The orientation on $e_k$ defines a start and an end point for $e_k$. Since $T$ is contractible and therefore a tree, there is a path in $T$ from the end to the start point of $e_k$, and this path is unique up to homotopy. Together with any path in $\overline e_k$ from its start to its end point, this defines a \emph{fundamental circuit} $c_k\in H_1(\Gamma;\Z)$ that traverses $e_k$ but is disjoint from $e_l$ for $l\neq k$. It is well known, and straightforward to check, that the the fundamental circuits $c_1,\ldots, c_g$ form a basis of $H_1(\Gamma;\Z)$. 

To obtain a basis for $\Omega_\Z(\Gamma)^*$, let $\eta_k$ denote the primitive tangent vector on $e_k$ in the direction specified by the orientation, and let $\vf{k}=(d\AJ_q)(\eta_k)$. As we observed in \S\ref{sec:tropical Jacobians}, $\vf{k}$ can be described as the morphism $\Omega_\Z(\Gamma)\to \Z$ assigning to an integral flow on $\Gamma$ its flow through $e_k$ in the direction specified by the orientation. By definition of the bilinear from $Q$ on $\Omega_\R(\Gamma)$, we have $Q(c_k,\vf l)=1$ if $k=l$ and $Q(c_k,\vf l)=0$ if $k\neq l$, that is $\vf 1,\ldots,\vf g$ is dual to the basis $c_1,\ldots,c_g$ with respect to $Q$. We noticed in \S\ref{sec:tropical Jacobians} that $\Omega_\Z(\Gamma)^*$ is precisely the set of vectors in $\Omega_\R(\Gamma)^*$ that have integral pairing with respect to $Q$ with all elements of $H_1(\Gamma;\Z)$. It follows directly that $\vf 1,\ldots,\vf g$ is a basis for $\Omega^*_\Z(\Gamma)$.

Similarly, by the isomorphism
\[
H^{*,*}(\Jac(\Gamma))\cong \bigwedge H_1(\Gamma;\Z)^* \otimes \bigwedge \Omega_\Z(\Gamma)
\]
of rings discussed in \S\ref{sec:homology of Abelian varieties}, bases for $H_1(\Gamma;\Z)^*$ and $\Omega_\Z(\Gamma)$  induce a basis for $H^{*,*}(\Jac(\Gamma))$. The bases we will use for these lattices are the dual bases $(c_k^*)_k$ and $(\vf k^*)_k$ to the bases $(c_k)_k$ and $(\vf k)_k$.

Note that both $H_{*,*}(\Jac(\Gamma))$ and $H^{*,*}(\Jac(\Gamma))$ are tensor products of skew commutative graded rings. We will use the following notation for elements of special form in groups of this type.
\begin{notation}
\label{notation:products in tensor of skew algebras}
Let $R_1$ and $R_2$ be two skew-commutative graded rings, let $J$ be a finite set, and let $a\colon J\to R_1$ and $b\colon J\to R_2$  be maps such that for every $j\in J$ the elements $a(j)$ and $b(j)$ are homogeneous of the same degree. Then for any injective map $\sigma\colon\{1,\ldots, k\} \to J$, the element
\[
\prod_{l=1}^k a(\sigma(l)) \otimes \prod_{l=1}^k b(\sigma(l)) 
\]
of $R_1\otimes_\Z R_2$ only depends on the image $I\coloneqq\sigma(\{1,\ldots, k\})$. We denote it by
\[
\prod_{i\in I} a(i) \otimes \prod_{i\in I} b(i) \ .
\]
\end{notation}

\subsection{Cycle classes of tautological cycles}
\label{subsec:cycle classes of tautological cycles}

\begin{prop}
\label{prop:formula for homology class of W1}
We have 
\[
\cyc[\tW_1] =\sum_{k=1}^g c_k\otimes \vf k \ .
\]
\end{prop}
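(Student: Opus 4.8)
The plan is to compute $\cyc[\tW_1]$ by pushing forward the fundamental cycle of $\Gamma$ along the Abel--Jacobi map and then tracking the result through the explicit chain-level description of the cycle class map. First I would invoke Proposition \ref{prop:i! times W_i} in the case $d=1$, which gives $[\tW_1]=(\AJ_q)_*[\Gamma]$. Since $\AJ_q$ is proper, the compatibility of the cycle class map with push-forwards (Theorem \ref{thm:cycle class map commutes with operations}) yields $\cyc[\tW_1]=(\AJ_q)_*\cyc[\Gamma]$. Using the explicit formula for the cycle class map of a $1$-cycle recalled in \S\ref{sec:tropical cycles}, together with the fact that all weights of $[\Gamma]$ are $1$, I can represent $\cyc[\Gamma]$ by the $(1,1)$-chain $\sum_e \gamma_{\overline e}\otimes\eta_{\overline e}$, where $e$ runs over the open edges of $\Gamma$, $\eta_e$ is the primitive tangent vector on $e$, and $\gamma_{\overline e}$ parametrizes $\overline e$ in the direction of $\eta_e$. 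Its push-forward is then $\sum_e(\AJ_q\circ\gamma_{\overline e})\otimes\vf e$, where $\vf e\coloneqq(d\AJ_q)(\eta_e)\in\Omega_\Z(\Gamma)^*$.

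Next I would use that on the torus $\Jac(\Gamma)$ the sheaf $\Omega^1$ is the constant sheaf associated to $\Omega_\Z(\Gamma)$ (\S\ref{sec:homology of Abelian varieties}), so that the coefficients $\vf e$ are globally constant and may be expanded in the basis $\vf 1,\dots,\vf g$. Interpreting $\vf e$ as the functional sending an integral flow $\omega$ to its flow through $e$, and using that $\vf 1,\dots,\vf g$ is $Q$-dual to the basis of fundamental circuits $c_1,\dots,c_g$, I would show $\vf e=\sum_j [c_j:e]\,\vf j$, where $[c_j:e]$ is the signed multiplicity with which $c_j$ traverses $e$, equivalently the flow of $c_j$ through $e$. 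Substituting and regrouping gives
\[
\cyc[\tW_1]=\sum_{j=1}^g\Big(\sum_e [c_j:e]\,(\AJ_q\circ\gamma_{\overline e})\Big)\otimes\vf j \ .
\]
For each $j$ the inner sum $\sum_e [c_j:e]\,\gamma_{\overline e}$ is a $1$-cycle on $\Gamma$ representing the fundamental circuit $c_j$ (it is closed because inflow equals outflow at every vertex of $c_j$), so after pushing forward its class in $H_1(\Jac(\Gamma);\Z)\cong H_1(\Gamma;\Z)$ equals $(\AJ_q)_*[c_j]=c_j$ by Lemma \ref{lem:pushforward is identity}. Since $\vf j$ is a constant coefficient, the $j$-th summand represents $c_j\otimes\vf j$, and the asserted formula follows.

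I expect the main obstacle to be the bookkeeping in the middle step: justifying that the push-forward of the tropical $(1,1)$-cycle may be treated as a singular $1$-chain with constant coefficients in $\Omega_\Z(\Gamma)^*$, verifying the coefficient expansion $\vf e=\sum_j[c_j:e]\vf j$ (which is exactly where the $Q$-duality of $\{c_k\}$ and $\{\vf l\}$ and the flow interpretation of both pairings enter), and checking that the regrouped inner $1$-cycle is genuinely closed and represents $c_j$ rather than merely something homologous to it. One must also keep the orientation and sign conventions consistent with the ``$\min$''-convention fixed in Remark \ref{rem:min convention}, so that the resulting coefficients are $+1$; with the opposite convention the formula would acquire an overall sign.
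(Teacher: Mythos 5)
Your proposal is correct and follows essentially the same route as the paper: represent $\cyc[\tW_1]$ by the pushed-forward $(1,1)$-chain $\sum_e (\AJ_q)_*(\overline e)\otimes \vf e$, expand each $\vf e$ in the basis $\vf 1,\dots,\vf g$ using the $Q$-duality with the fundamental circuits (your signed multiplicity $[c_j:e]$ is exactly $Q(c_j,\vf e)$), regroup so the inner sum becomes $(\AJ_q)_*c_j$, and conclude with Lemma \ref{lem:pushforward is identity}. The extra care you take with orientations and signs is sound and matches the paper's implicit handling of them.
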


\begin{proof}
Choose an orientation for every edge $e$ of $\Gamma$ that coincides with the orientation we have already chosen if $e=e_k$ for some $k$. Let $\eta_e$ the primitive tangent vector of $e$ in the direction specified by the orientation, and let $\vf e=(d\AJ_q)(\eta_e)$. By construction, we have $\vf{e_k}=\vf k$ for all $1\leq k\leq g$. It follows immediately from the definition of the tropical cycle class map and Theorem \ref{thm:cycle class map commutes with operations} that $\cyc[\tW_1]$ is represented by the $(1,1)$-cycle
\begin{equation*}
\sum_{e\in E(\Gamma)} (\AJ_q)_*(\overline e)\otimes \vf e \in C_{1,1}(X) \ ,
\end{equation*} 
where we view the oriented closed edge $\overline e$ as a singular $1$-simplex by choosing a parametrization compatible with the given orientation. Using that the $c_k$ and the $\vf k$ form dual bases with respect to the bilinear form $Q$, we see that the above equals
\begin{equation*}
\sum _{e\in E(\Gamma)}(\AJ_q)_*(\overline e)\otimes \left ( \sum_{i=1}^g Q(c_i, \vf e)\cdot \vf i\right) =
\sum_{i=1}^g \left ( \sum_{e\in E(\Gamma)} Q( c_i,\vf e)\cdot (\AJ_q)_*(\overline e) \right ) \otimes \vf i \ .
\end{equation*}
Since $Q(c_i,\vf e)$ is $1$ whenever $e$ is on the loop $c_i$, and $0$ otherwise, we have 
\begin{equation*}
\sum_{e\in E(\Gamma)} Q(c_i,\vf e) (\AJ_q)_*(\overline e)=(\AJ_q)_*c_i \ ,
\end{equation*}
which is equal to $c_i$ by Lemma \ref{lem:pushforward is identity}. This finishes the proof.
\end{proof}

\begin{rem}
It follows immediately from Proposition \ref{prop:formula for homology class of W1} that the expression
\[
\sum_k c_k\otimes \vf k \in H_1(\Gamma;\Z)\otimes \Omega_\Z(\Gamma)^*
\]
is independent of the choice of spanning tree used to define the elements $c_k$ and $\vf k$. On a closer look, it turns out that this independence is more of a feature of linear algebra than a feature of spanning trees. To see this, we observe that the natural isomorphism $H_1(\Gamma;\Z)\cong \Omega_\Z(\Gamma)$ identifies the basis $(\vf k)_k$ with the dual basis of $(c_k)_k$. Therefore, $\sum_k c_k\otimes \vf k$ is identified with the identity endomorphism on $H_1(\Gamma;\Z)$ under the composite
\[
H_1(\Gamma;\Z)\otimes \Omega_\Z(\Gamma)^*\cong H_1(\Gamma;\Z)\otimes H_1(\Gamma;\Z)^*\cong \End(H_1(\Gamma;\Z)) \ ,
\]
which is an invariant of $H_1(\Gamma;\Z)$ rather than of $\Gamma$.
\end{rem}

\begin{lemma}
\label{lem:formula for homology class of Wi}
We have
\[
\cyc[\tW_d]= 
\sum_{\substack{I\subseteq \{1,\ldots, g\} \\|I|=d}} \bigwedge_{k\in I} c_k \otimes \bigwedge_{k\in I} \vf{k} \ .
\]
\end{lemma}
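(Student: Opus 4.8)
The plan is to leverage the fact that $[\tW_d]$ is, up to the factor $d!$, a $d$-fold Pontryagin power of $[\tW_1]$, together with the compatibility of the cycle class map with Pontryagin products. First I would start from Corollary \ref{cor:Wi as Pontryagin product}, which gives $\star_{k=1}^d [\tW_1] = d!\,[\tW_d]$ in $Z_*(\Jac(\Gamma))$. Applying $\cyc$ to both sides and invoking Proposition \ref{prop:Pontryagin product commutes with cycle map}, which says that $\cyc$ transforms Pontryagin products of cycles into Pontryagin products of homology classes, I obtain
\[
d!\,\cyc[\tW_d] = \cyc\!\left(\star_{k=1}^d [\tW_1]\right) = \big(\cyc[\tW_1]\big)^{\star d} \ .
\]
This reduces the problem to a purely algebraic computation of the $d$-th Pontryagin power of the class $\cyc[\tW_1]$, whose explicit form is given by Proposition \ref{prop:formula for homology class of W1}.

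Next I would carry out this computation. Substituting $\cyc[\tW_1] = \sum_{k=1}^g c_k \otimes \vf k$ and expanding the power using the explicit description of the Pontryagin product on $H_{*,*}(\Jac(\Gamma))$ from \S\ref{sec:homology of Abelian varieties}, namely $(\alpha \otimes \omega) \star (\beta \otimes \xi) = (\alpha\wedge\beta)\otimes(\omega\wedge\xi)$, yields
\[
\big(\cyc[\tW_1]\big)^{\star d} = \sum_{k_1,\ldots,k_d} \big(c_{k_1}\wedge\cdots\wedge c_{k_d}\big)\otimes\big(\vf{k_1}\wedge\cdots\wedge\vf{k_d}\big) \ ,
\]
where the sum runs over all $d$-tuples in $\{1,\ldots,g\}$. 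Any term in which two indices coincide vanishes, since then both wedge factors contain a repeated generator. For a tuple of distinct indices, permuting the entries by $\sigma$ multiplies each of the two wedge products by $\sgn(\sigma)$, so the tensor is multiplied by $\sgn(\sigma)^2 = 1$ and is therefore invariant; this is exactly the situation codified in Notation \ref{notation:products in tensor of skew algebras}. Hence the surviving terms group according to their underlying $d$-element subset $I\subseteq\{1,\ldots,g\}$, with each of the $d!$ orderings of $I$ contributing the identical summand $\bigwedge_{k\in I} c_k \otimes \bigwedge_{k\in I}\vf k$. This gives
\[
\big(\cyc[\tW_1]\big)^{\star d} = d! \sum_{\substack{I\subseteq\{1,\ldots,g\}\\ |I|=d}} \bigwedge_{k\in I} c_k \otimes \bigwedge_{k\in I}\vf k \ .
\]

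Combining the two displays gives $d!\,\cyc[\tW_d] = d!\sum_{|I|=d}\bigwedge_{k\in I}c_k\otimes\bigwedge_{k\in I}\vf k$, and it remains to cancel the common factor $d!$. The one point requiring care is that this cancellation is legitimate only because the target group is torsion-free: by the isomorphism \eqref{equ:homology of real torus}, $H_{*,*}(\Jac(\Gamma))\cong \bigwedge^*\! H_1(\Gamma;\Z)\otimes_\Z \bigwedge^*\!\Omega_\Z(\Gamma)^*$ is a tensor product of exterior powers of free $\Z$-modules, hence free, so multiplication by $d!$ is injective and the desired equality follows. I expect this divisibility step to be the only genuine subtlety; the rest is a sign-bookkeeping exercise that is streamlined precisely by Notation \ref{notation:products in tensor of skew algebras}.
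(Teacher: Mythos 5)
Your proposal is correct and follows essentially the same route as the paper: reduce to the $d$-fold Pontryagin power of $\cyc[\tW_1]$ via Proposition \ref{prop:i! times W_i} (equivalently Corollary \ref{cor:Wi as Pontryagin product}) and Proposition \ref{prop:Pontryagin product commutes with cycle map}, expand using the explicit Pontryagin product, discard repeated-index terms by skew-commutativity, group the remaining $d!$ orderings of each subset $I$, and cancel $d!$ using torsion-freeness of $H_{*,*}(\Jac(\Gamma))$. The paper's proof is the same argument step for step, including the final appeal to torsion-freeness that you correctly flag as the one point needing justification.
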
 

\begin{proof}
Using Proposition \ref{prop:i! times W_i} and Proposition \ref{prop:Pontryagin product commutes with cycle map} we obtain
\[
d!\cyc[\tW_d] = d!\cyc \left(\bigstar_{k=1}^d [\tW_1]\right) =d!\bigstar_{k=1}^d \cyc[\tW_1] \ .
\]
By Proposition \ref{prop:formula for homology class of W1}, this equals
\[
\bigstar_{k=1}^d\left( \sum_{l=1}^g c_l\otimes \vf{l} \right) \ . 	
\]
Using the description of the Pontryagin product from \S\ref{sec:homology of Abelian varieties}, we can rewrite this as
\[
\sum_\sigma \bigwedge_{k=1}^d c_{\sigma(k)} \otimes \bigwedge_{k=1}^d \vf{\sigma(k)} \ ,
\]
where the sum is over all maps $\sigma\colon \{1,\ldots,d\}\to \{1,\ldots,g\}$. Since $\bigwedge \Omega_\Z(\Gamma)$  is skew-commutative, only an injective $\sigma$ would contribute to the sum. If $I$ is the image of an injective $\sigma$ then, using our Notation \ref{notation:products in tensor of skew algebras}, we have 
\[
\bigwedge_{k=1}^d c_{\sigma(k)} \otimes \bigwedge_{k=1}^d \vf{\sigma(k)} = \bigwedge_{k\in I} c_k \otimes \bigwedge_{k\in I} \vf{k} \ .
\]
Since the map $\sigma\mapsto \sigma(I)$, for injective $\sigma\colon \{1,\ldots,d\}\to \{1,\ldots,g\}$, is $d!$-to-$1$, we obtain
\[
d!\cyc[\tW_d]=\sum_\sigma \bigwedge_{k=1}^d c_{\sigma(k)} \otimes \bigwedge_{k=1}^d \vf{\sigma(k)} = d!\sum_{\substack{I\subseteq \{1,\ldots, g\} \\|I|=d}} \bigwedge_{k\in I} c_k \otimes \bigwedge_{k\in I} \vf{k} \ .
\]
The result follows after dividing both sides by $d!$. This division is allowed because the tropical homology groups of $\Jac(\Gamma)$ are torsion-free.
\end{proof}

\subsection{Tropical cycle classes of powers of the theta divisor}
\label{subsec:powers of theta}

\begin{lemma}
\label{lem:formula for Chern class of theta divisor}
We have
\[
c_1(\mL(\Theta))=\sum_{i=1}^g c_i^*\otimes \vf i^*.
\]
\end{lemma}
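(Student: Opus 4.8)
The plan is to reduce the statement to the computation of the first Chern class of $\mL(\Theta)$ that was already carried out, and then to read off its coordinates directly from the duality between the two chosen bases. By construction we have $\mL(\Theta)=\mL(Q,0)$, and Proposition \ref{prop:chern class recovers symmetric form} gives $c_1(\mL(\Theta))=c_1(\mL(Q,0))=Q$, where $Q$ is regarded as an element of $H^{1,1}(\Jac(\Gamma))$ via Convention \ref{conv:convention on H11 and bilinear forms}. Thus all that remains is to expand the bilinear form $Q$ in the basis of $H^{1,1}(\Jac(\Gamma))$ induced by the edges $e_1,\dots,e_g$, i.e. to show that its coordinates are exactly $\delta_{ij}$.

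Recall from \S\ref{subsec:generators for homology} that $\Jac(\Gamma)=N_\R/\Lambda$ with $N=\Omega_\Z(\Gamma)^*$ and $\Lambda=H_1(\Gamma;\Z)$, that $(c_k)_k$ is a basis of $\Lambda$ and $(\vf k)_k$ a basis of $N$, and, crucially, that these two bases are dual with respect to $Q$, i.e. $Q(c_k,\vf l)=\delta_{kl}$. Under Convention \ref{conv:convention on H11 and bilinear forms}, the chain of identifications $H^{1,1}(\Jac(\Gamma))\cong \Lambda^*\otimes_\Z M\cong \Hom(\Lambda\otimes_\Z N,\Z)$ (with $M=\Omega_\Z(\Gamma)=\Hom(N,\Z)$) sends a decomposable tensor $c_i^*\otimes \vf i^*$, and more generally $c_i^*\otimes \vf j^*$, to the pairing $(\lambda,n)\mapsto c_i^*(\lambda)\cdot \vf j^*(n)$ on $\Lambda\times N$. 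Evaluating this on a basis pair $(c_k,\vf l)$ gives $\delta_{ik}\delta_{jl}$, so the forms $c_i^*\otimes \vf j^*$ constitute the dual basis of $\Hom(\Lambda\otimes_\Z N,\Z)$. Consequently every $B\in H^{1,1}(\Jac(\Gamma))$ satisfies $B=\sum_{i,j}B(c_i,\vf j)\,c_i^*\otimes \vf j^*$; specializing to $B=Q$ and invoking $Q(c_i,\vf j)=\delta_{ij}$ collapses the double sum to $\sum_{i=1}^g c_i^*\otimes \vf i^*$, which is the asserted identity.

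The argument is essentially bookkeeping, so the only point that genuinely requires care is the chain of identifications in Convention \ref{conv:convention on H11 and bilinear forms}: one must verify that $(\vf k^*)_k$ is the basis of $M=\Hom(N,\Z)$ dual to $(\vf k)_k\subseteq N$, and that $c_i^*\otimes \vf j^*$ corresponds to the \emph{product} pairing rather than to a transposed or sign-twisted variant, the overall sign having already been fixed by the ``$\min$''-convention through Proposition \ref{prop:chern class recovers symmetric form}. Once these identifications are pinned down, the expansion of $Q$ against the dual basis is immediate and no further computation is needed.
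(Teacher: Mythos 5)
Your proof is correct and follows the same route as the paper: both reduce to the fact that $c_1(\mL(\Theta))=Q$ (via $\mL(\Theta)=\mL(Q,0)$ and Proposition \ref{prop:chern class recovers symmetric form}) and then expand $Q$ in the basis $c_i^*\otimes\vf j^*$ using that $(c_k)_k$ and $(\vf k)_k$ are $Q$-dual bases. The paper's version is just a terser statement of exactly this bookkeeping.
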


\begin{proof}
As already observed in \S\ref{subsec:Theta divisor}, we have $c_1(\mL(\Theta))=Q$, where we identify
\[
H^{1,1}(\Jac(\Gamma))\cong H^1(\Gamma;\Z) \otimes \Omega_\Z(\Gamma)
\]
with $\Hom(H_1(\Gamma;\Z)\otimes \Omega_\Z(\Gamma),\Z)$.
Because  $(c_k)_k$ and $(\vf k)_k$ are dual bases with respect to $Q$, the assertion follows.
\end{proof}

\begin{lemma}
\label{lem:Poincare duality for some monomial classes}
Let $I\subseteq \{1,\ldots, g\}$ with $|I|=d$. Then 
\[
\bigwedge_{k\in I} c_k^* \otimes \bigwedge_{k\in I} \vf{k}^*\in  \bigwedge\nolimits^d H_1(\Gamma,\Z)^* \otimes \bigwedge\nolimits^d \Omega_\Z(\Gamma) \cong H^{d,d}(\Jac(\Gamma))
\] 
is Poincar\'e dual to 
\[
\bigwedge_{k\in \{1,\ldots,g\} \setminus I} c_k \otimes \bigwedge_{k\in \{1,\ldots,g\}\setminus I} \vf{k}\in  H_{g-d}(\Jac(\Gamma),\Z)\otimes \bigwedge\nolimits^{g-d}\Omega_\Z(\Gamma)^* \cong H_{g-d,g-d}(\Jac(\Gamma))\ .
\]
\end{lemma}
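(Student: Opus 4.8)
The plan is to compute the Poincar\'e dual directly as a cap product with the fundamental class, using the explicit descriptions of $H_{*,*}$, $H^{*,*}$, and the cap product on a real torus recorded in \S\ref{sec:homology of Abelian varieties}. Recall from \S\ref{subsec:tropical homology} that $c\in H^{*,*}(\Jac(\Gamma))$ is Poincar\'e dual to $c\frown \cyc[\Jac(\Gamma)]$. Thus the whole statement reduces to (i) identifying the fundamental class $\cyc[\Jac(\Gamma)]\in H_{g,g}(\Jac(\Gamma))$ in our chosen coordinates, and (ii) carrying out a single interior-product computation.

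For (i), the first step is to observe that $\tW_g=\Jac(\Gamma)$. Indeed, $W_g$ is the set of classes of effective divisors of degree $g$, and by tropical Riemann--Roch one has $\rank(D)\geq \deg(D)-g=0$ for every $D$ of degree $g$; hence every degree-$g$ class is effective and $W_g=\Pic^g(\Gamma)$. Since $\tW_g$ and $\Jac(\Gamma)$ then have the same support and both carry fundamental cycles of weight $1$, their fundamental cycles agree, so $\cyc[\Jac(\Gamma)]=\cyc[\tW_g]$. Applying Lemma \ref{lem:formula for homology class of Wi} with $d=g$, where the indexing sum collapses to the single term $I=\{1,\ldots,g\}$, yields
\[
\cyc[\Jac(\Gamma)]=c_1\wedge\cdots\wedge c_g\otimes \vf 1\wedge\cdots\wedge\vf g \ .
\]

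For (ii), applying the cap product formula \eqref{equ:identity for cap product on real tori} factorwise gives
\[
\left(\bigwedge_{k\in I} c_k^*\otimes \bigwedge_{k\in I}\vf k^*\right)\frown \cyc[\Jac(\Gamma)]
=\left(\bigwedge_{k\in I} c_k^*\,\lrcorner\, \bigwedge_{k=1}^g c_k\right)\otimes\left(\bigwedge_{k\in I}\vf k^*\,\lrcorner\, \bigwedge_{k=1}^g \vf k\right) \ .
\]
Each interior product contracts away exactly the factors indexed by $I$, producing $\varepsilon(I)\bigwedge_{k\notin I} c_k$ and $\varepsilon(I)\bigwedge_{k\notin I}\vf k$ respectively, where $\varepsilon(I)=\pm 1$ is the sign of the shuffle sending $(i_1<\cdots<i_d,\, \text{rest})$ to $(1,\ldots,g)$. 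The key point is that the two factors have identical index combinatorics, so the \emph{same} sign $\varepsilon(I)$ occurs in each; their product is $\varepsilon(I)^2=1$, and the claimed class $\bigwedge_{k\notin I} c_k\otimes \bigwedge_{k\notin I}\vf k$ results with no residual sign.

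The main obstacle is part (i): pinning down the fundamental class with the correct sign. The cycle class map carries the orientation ambiguity of Remark \ref{rem:min convention}, so one must ensure the chosen (``$\min$''-convention) generator of $H_{g,g}$ is exactly $c_1\wedge\cdots\wedge c_g\otimes \vf 1\wedge\cdots\wedge\vf g$ rather than its negative. Routing this through Lemma \ref{lem:formula for homology class of Wi} sidesteps the difficulty, since that lemma already fixed all signs consistently. Once the $\varepsilon(I)^2$ cancellation is noted, the interior-product bookkeeping in (ii) is entirely routine.
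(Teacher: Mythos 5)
Your proposal is correct and follows essentially the same route as the paper: both identify $\cyc[\Jac(\Gamma)]$ as $\bigwedge_k c_k\otimes\bigwedge_k\vf{k}$ via $\Jac(\Gamma)=\tW_g$ and Lemma~\ref{lem:formula for homology class of Wi}, then apply the factorwise cap/interior-product formula \eqref{equ:identity for cap product on real tori} and observe that the two identical signs $\varepsilon(I)$ cancel. Your added justification of $W_g=\Pic^g(\Gamma)$ via Riemann--Roch and your remark on the orientation convention are harmless elaborations of steps the paper takes for granted.
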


\begin{proof}
Since $\Jac(\Gamma)=\tW_g$, we have
\begin{equation}
\label{equ:formula for fudamental calss}
\cyc[\Jac(\Gamma)]=\left(\bigwedge_{1\leq k \leq g} c_k\right)\otimes \left(\bigwedge_{1\leq k\leq g}\vf{k}\right)
\end{equation}
by Lemma \ref{lem:formula for homology class of Wi}.

Note that for every $\alpha\in H_1(\Jac(\Gamma);\Z)^*$, $x\in \bigwedge ^iH_1(\Jac(\Gamma);\Z)$, and $y\in \bigwedge ^j H_1(\Jac(\Gamma);\Z)$ we have
\[
\alpha\,\lrcorner\,(x\wedge y)= (\alpha \,\lrcorner\, x)\wedge x +(-1)^i  x\wedge (\alpha\,\lrcorner\, y)
\]
by the properties of the interior product (see  \cite[Proposition A 2.8]{Eisenbud}). 
Similarly for every $a\in \Omega_\Z(\Gamma)$, $b\in \bigwedge^i \Omega_\Z(\Gamma)^*$, and $c\in \bigwedge^j\Omega_\Z(\Gamma)^*$ we have 
\[
a\,\lrcorner\, (b\wedge c)= (a \,\lrcorner\, b)\wedge c +(-1)^i  b\wedge (a\,\lrcorner\, c) \ .
\]
Using induction, we conclude that
\[
\bigwedge_{k\in I}c_k^* \frown \bigwedge_{k\in \{1,\ldots,g\}} c_k = \pm \bigwedge_{k\in \{1,\ldots,g\}\setminus I}c_k
\]
and similarly
\[
\bigwedge_{k\in I}\vf{k}^*\frown \bigwedge_{k\in \{1,\ldots,g\}} \vf{k} =\pm \bigwedge_{k\in\{1,\ldots,g\}\setminus I} \vf{k} \ , 
\]
with the sign being the same on the right-hand sides of the two equations as long as we order the sets $I$ and $\{1,\ldots, g\}$ consistently in both equations.
Combining these identities  with the expression (\ref{equ:formula for fudamental calss}) for the fundamental class of $\Jac(\Gamma)$ and the identity (\ref{equ:identity for cap product on real tori}), it follows that 
\[
\left( \bigwedge _{k\in I }c_k^* \otimes \bigwedge_{k\in I} \vf{k}^*\right) \frown [\Jac(\Gamma)]  = \bigwedge_{k\in \{1,\ldots,g\}\setminus I} c_k \otimes \bigwedge_{k\in \{1,\ldots,g\}\setminus I} \vf{k} \ , 
\]
which is precisely what we needed to show.
\end{proof}

The following result is the tropical analogue of \cite[Theorem 4.10.4]{bila}.
\begin{lemma}
\label{lem:homology class of power of Theta}
We have 
\[
\cyc([\Theta]^{g-d}) =(g-d)!\sum_{\substack {I\subseteq \{1,\ldots, g\}\\ |I|=d}} \bigwedge_{k\in I} c_k \otimes \bigwedge_{k\in I} \vf{k} \ . 
\]
\end{lemma}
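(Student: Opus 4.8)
The plan is to reduce the statement to a purely cohomological computation and then expand, using the multiplicative structure of $H^{*,*}(\Jac(\Gamma))$ together with Poincar\'e duality. First I would rewrite the self-intersection as an iterated intersection with the Cartier divisor $\Theta$: since $[\Theta]=\Theta\cdot[\Jac(\Gamma)]$ and the intersection product on $Z_*(\Jac(\Gamma))$ satisfies $\Theta\cdot A=[\Theta]\cdot A$, we have $[\Theta]^{g-d}=\Theta\cdot\bigl(\Theta\cdots(\Theta\cdot[\Jac(\Gamma)])\bigr)$ with $g-d$ factors. Applying the identity $\cyc(D\cdot A)=c_1(\mL(D))\frown\cyc(A)$ of Theorem \ref{thm:cycle class map commutes with operations} repeatedly, and using that the tropical cap product makes $H_{*,*}(\Jac(\Gamma))$ a module over the cohomology ring (so that $c\frown(c'\frown\alpha)=(c\smile c')\frown\alpha$), this yields
\[
\cyc\bigl([\Theta]^{g-d}\bigr)=c_1(\mL(\Theta))^{\smile(g-d)}\frown\cyc[\Jac(\Gamma)] \ .
\]

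Next I would compute the cup power. By Lemma \ref{lem:formula for Chern class of theta divisor} we have $c_1(\mL(\Theta))=\sum_{i=1}^g c_i^*\otimes\vf i^*$, and by the cup product formula from \S\ref{sec:homology of Abelian varieties}, namely $(\alpha\otimes\omega)\smile(\beta\otimes\xi)=(\alpha\wedge\beta)\otimes(\omega\wedge\xi)$, the power expands as a sum over all maps $\sigma\colon\{1,\dots,g-d\}\to\{1,\dots,g\}$ of $\bigwedge_{l}c_{\sigma(l)}^*\otimes\bigwedge_{l}\vf{\sigma(l)}^*$. By skew-commutativity only injective $\sigma$ contribute, each injective $\sigma$ with a given image $J$ (necessarily $|J|=g-d$) gives the same term $\bigwedge_{k\in J}c_k^*\otimes\bigwedge_{k\in J}\vf k^*$ in the sense of Notation \ref{notation:products in tensor of skew algebras}, and there are exactly $(g-d)!$ such $\sigma$. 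Hence
\[
c_1(\mL(\Theta))^{\smile(g-d)}=(g-d)!\sum_{\substack{J\subseteq\{1,\dots,g\}\\|J|=g-d}}\bigwedge_{k\in J}c_k^*\otimes\bigwedge_{k\in J}\vf k^* \ .
\]

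Finally I would cap with the fundamental class. By Lemma \ref{lem:Poincare duality for some monomial classes}, applied to each subset $J$ of size $g-d$, the monomial $\bigwedge_{k\in J}c_k^*\otimes\bigwedge_{k\in J}\vf k^*$ is Poincar\'e dual to $\bigwedge_{k\in\{1,\dots,g\}\setminus J}c_k\otimes\bigwedge_{k\in\{1,\dots,g\}\setminus J}\vf k$, i.e.\ capping with $\cyc[\Jac(\Gamma)]$ replaces $J$ by its complement. As $J$ ranges over the subsets of size $g-d$, the complement $I=\{1,\dots,g\}\setminus J$ ranges over the subsets of size $d$, and substituting into the previous display gives exactly the claimed formula.

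I expect the main obstacle to be the sign and factorial bookkeeping rather than any conceptual difficulty: one must check that the $(g-d)!$ identical contributions in the cup-power expansion really are identical (including sign) as elements described via Notation \ref{notation:products in tensor of skew algebras}, and that the signs produced by the interior-product computations underlying Lemma \ref{lem:Poincare duality for some monomial classes} are compatible. The key point is to order each set $J$ and its complement consistently, so that the reordering signs introduced by skew-commutativity in the cup product cancel against those appearing in the cap product; this consistency is precisely what Notation \ref{notation:products in tensor of skew algebras} and the sign statement in Lemma \ref{lem:Poincare duality for some monomial classes} are designed to guarantee, so no sign ambiguity survives and the coefficient is the clean $(g-d)!$.
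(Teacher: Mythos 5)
Your proposal is correct and follows essentially the same route as the paper's proof: reduce to $\cyc([\Theta]^{g-d})=c_1(\mL(\Theta))^{g-d}\frown\cyc[\Jac(\Gamma)]$ via Theorem \ref{thm:cycle class map commutes with operations}, expand the cup power of $c_1(\mL(\Theta))=\sum_i c_i^*\otimes\vf i^*$ to get the factor $(g-d)!$ exactly as in the proof of Lemma \ref{lem:formula for homology class of Wi}, and then apply Lemma \ref{lem:Poincare duality for some monomial classes} to pass to complementary index sets. Your extra attention to the sign bookkeeping (that reordering signs appear simultaneously in both tensor factors and hence cancel, which is what Notation \ref{notation:products in tensor of skew algebras} encodes) is exactly the point the paper relies on implicitly.
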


\begin{proof}
Since intersections with divisors is compatible with the tropical cycle class map by Theorem \ref{thm:cycle class map commutes with operations}, we have 
\[
\cyc([\Theta]^{g-d})= c_1(\mL(\Theta))^{g-d}\frown [\Jac(\Gamma)] \ ,
\]
that is $\cyc([\Theta]^{g-d})$ is Poincar\'e dual to $c_1(\mL(\Theta))^{g-d}$.
By Lemma \ref{lem:formula for Chern class of theta divisor} we know that 
\[
c_1(\mL(\Theta))= \sum_{i=1}^g c_i^*\otimes \vf {e_i}^* \ .
\]
With the description of the cap-product on $H^{*,*}(X)$ given in \S\ref{sec:homology of Abelian varieties}, we obtain
\begin{equation*}
c_1(\mL(\Theta))^{g-d} =  
(g-d)!\sum_{\substack{I\subseteq \{1,\ldots, g\} \\ |I|= g-d}}  \bigwedge_{k\in I} c_k^* \otimes  \bigwedge_{k\in I} \vf{ e_k}^* 
\end{equation*}
similar as in the proof of Lemma \ref{lem:formula for homology class of Wi}.
Applying Lemma \ref{lem:Poincare duality for some monomial classes} finishes the proof.
\end{proof}

\subsection{The proof of the tropical Poincar\'e formula}
\begin{thm}
\label{thm:Poincare formula}
The Poincar\'e formula holds tropically, that is we have
\[
(g-d)![\tW_d] \sim_{\mathrm{hom}} [\Theta]^{g-d} \ .
\]
\end{thm}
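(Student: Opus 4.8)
The plan is simply to assemble the explicit formulas already computed for the cycle classes of both sides; essentially no new ideas are needed, since the entire combinatorial and homological content has been front-loaded into the preceding lemmas. By the very definition of homological equivalence, it suffices to verify that the two tropical cycles $(g-d)![\tW_d]$ and $[\Theta]^{g-d}$ have the same image under the cycle class map $\cyc$.

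First I would use the linearity of the cycle class map together with Lemma \ref{lem:formula for homology class of Wi} to write
\[
\cyc\big((g-d)![\tW_d]\big) = (g-d)!\,\cyc[\tW_d] = (g-d)!\sum_{\substack{I\subseteq \{1,\ldots, g\} \\ |I|=d}} \bigwedge_{k\in I} c_k \otimes \bigwedge_{k\in I} \vf{k} \ .
\]
Next I would invoke Lemma \ref{lem:homology class of power of Theta}, which yields precisely
\[
\cyc\big([\Theta]^{g-d}\big) = (g-d)!\sum_{\substack{I\subseteq \{1,\ldots, g\} \\ |I|=d}} \bigwedge_{k\in I} c_k \otimes \bigwedge_{k\in I} \vf{k} \ .
\]

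Comparing these two right-hand sides, they agree term by term, so $\cyc\big((g-d)![\tW_d]\big) = \cyc\big([\Theta]^{g-d}\big)$, which is exactly the asserted homological equivalence. The only point worth flagging is that both computations must be carried out with respect to the \emph{same} bases $(c_k)_k$ and $(\vf k)_k$ of $H_1(\Gamma;\Z)$ and $\Omega_\Z(\Gamma)^*$ determined by the fixed edges $e_1,\ldots,e_g$ and their chosen orientations; since both lemmas are stated within this common setup, no reconciliation of choices is needed, and in particular the possible signs appearing in the Poincaré-duality computation of Lemma \ref{lem:Poincare duality for some monomial classes} are handled consistently there. There is no genuine obstacle at this final stage: the substantive work lies entirely in establishing the two explicit formulas, most notably the Poincaré-duality identity of Lemma \ref{lem:Poincare duality for some monomial classes} feeding into Lemma \ref{lem:homology class of power of Theta}, after which the theorem reduces to a direct comparison of coefficients.
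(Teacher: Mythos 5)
Your proposal is correct and follows essentially the same route as the paper's own proof: both simply combine Lemma \ref{lem:formula for homology class of Wi} and Lemma \ref{lem:homology class of power of Theta} and compare the resulting expressions term by term. Your remark about using consistent bases and orientations is a sensible sanity check but adds nothing beyond what the paper's argument already implicitly assumes.
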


\begin{rem}
The Poincar\'e formula is more commonly expressed as
\[
[\tW_d] \sim_{\mathrm{hom}} \frac{1}{(g-d)!} [\Theta]^{g-d} \ ,
\]
where we the right side is defined after an extension of scalars to $\Q$. Because the tropical homology groups of Jacobians are torsion-free, this is indeed an equivalent expression of the formula.
\end{rem}

\begin{proof}
By Lemma \ref{lem:formula for homology class of Wi} we have
\[
\cyc[\tW_d] = \sum_{\substack{I\subseteq \{1,\ldots, g\} \\|I|=d}} \bigwedge_{k\in I} c_k \otimes \bigwedge_{k\in I} \vf{k} \ . 	
\]
On the other hand, by Lemma \ref{lem:homology class of power of Theta} we have
\[
\cyc([\Theta]^{g-d}) =(g-d)!\sum_{\substack {I\subseteq \{1,\ldots, g\}\\ |I|=d}} \bigwedge_{k\in I} c_k \otimes \bigwedge_{k\in I} \vf{k} \ .
\]
It follows immediately that
\[
\cyc((g-d)![\tW_d])=\cyc([\Theta]^{g-d}) \ ,
\]
which is equivalent to saying that $[\tW_d]$ and $[\Theta]^{g-d}$ are homologically equivalent.
\end{proof}

\begin{cor}
\label{cor:poincare up to numerical equivalence}
We have 
\[
(g-d)![\tW_d] \sim_{\mathrm{num}} [\Theta]^{g-d} \ .
\]
\end{cor}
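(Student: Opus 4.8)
The plan is to deduce this corollary directly from the homological form of the Poincar\'e formula, using the general implication established earlier that homological equivalence refines to numerical equivalence on real tori with a spanning curve. Since Theorem \ref{thm:Poincare formula} already gives
\[
(g-d)![\tW_d] \sim_{\mathrm{hom}} [\Theta]^{g-d} \ ,
\]
essentially no new computation is required; the work is entirely in assembling the correct hypotheses.

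First I would recall that the tropical Jacobian $\Jac(\Gamma)$ of a compact connected smooth tropical curve admits a spanning curve: this was proven immediately after the definition of spanning curves, where the image $\AJ_q(\Gamma)$ of the Abel--Jacobi map was shown to span $T_0\Jac(\Gamma)$ via the explicit description of the tangent directions $\vf{k}$. Thus $\Jac(\Gamma)$ falls within the scope of Proposition \ref{prop:homological equivalence implies numerical equivalence}.

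Next I would simply invoke Proposition \ref{prop:homological equivalence implies numerical equivalence} with $X=\Jac(\Gamma)$, taking $A=(g-d)![\tW_d]$ and $B=[\Theta]^{g-d}$. Since $A\sim_{\mathrm{hom}} B$ by Theorem \ref{thm:Poincare formula} and $X$ admits a spanning curve, the proposition yields $A\sim_{\mathrm{num}} B$, which is exactly the desired statement
\[
(g-d)![\tW_d] \sim_{\mathrm{num}} [\Theta]^{g-d} \ .
\]

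There is no genuine obstacle at this stage: all the difficulty was front-loaded into Theorem \ref{thm:cycle map commutes with intersection product} (compatibility of the cycle class map with the intersection product on real tori with a spanning curve), which is what powers the homological-to-numerical implication, and into the combinatorial computation of the two sides of the Poincar\'e formula in \S\ref{subsec:cycle classes of tautological cycles} and \S\ref{subsec:powers of theta}. The only point worth stating carefully is the verification that $\Jac(\Gamma)$ indeed admits a spanning curve, so that the hypothesis of Proposition \ref{prop:homological equivalence implies numerical equivalence} is met; everything else is a formal concatenation of the two cited results.
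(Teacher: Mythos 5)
Your proposal is correct and matches the paper's proof, which likewise deduces the corollary directly from Theorem \ref{thm:Poincare formula} together with Proposition \ref{prop:homological equivalence implies numerical equivalence}. Your extra care in checking that $\Jac(\Gamma)$ admits a spanning curve is exactly the hypothesis the paper supplies via its proposition on the Abel--Jacobi image, so nothing is missing.
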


\begin{proof}
This follows directly from Theorem~\ref{thm:Poincare formula} and Proposition \ref{prop:homological equivalence implies numerical equivalence}.
\end{proof}

\begin{rem}
\label{rem:reduction to boundaryless case}
We have proved Theorem \ref{thm:Poincare formula} under the assumption that the smooth tropical curve is \boundaryless{}.  If $\Gamma$ is a compact and connected smooth tropical curve with boundary as described in Remark \ref{rem:curves with boundary}, then the Poincar\'e formula holds as well, and the proof in this seemingly more general case can easily be reduced to the \boundaryless{} case. Namely, if $\Gamma'$ denotes the \boundaryless{} smooth tropical curve obtained from $\Gamma$ by removing the leaves from $\Gamma$, then $\Gamma$ and $\Gamma'$ have identical Jacobians, and their theta divisors coincide by definition. Furthermore, the Abel-Jacobi map associated to $\Gamma'$ contracts all the leaves of $\Gamma'$, so that the loci $\tW_d$ associated to $\Gamma$ and $\Gamma'$ coincide as well.
\end{rem}

\subsection{Consequences of the Poincar\'e formula}
The tropical Poincar\'e formula has some interesting immediate consequences. One of them is a tropical version of Riemann's theorem. The statement has appeared before \cite{MZjacobians}, with a different (combinatorial) proof. To state the theorem, recall from \S\ref{sec:geometric cycles} that the Abel-Jacobi map induces a bijection $\Pic^0(\Gamma)\to \Jac(\Gamma)$. Because all contributions from the chosen base point $q$ cancel in degree $0$, this bijection is independent of all choices. In particular, we can view $\Theta$ as a divisor on $\Pic^0(\Gamma)$ in a natural way. Also recall from \S\ref{sec:geometric cycles} that while $\tW_d\subseteq \Jac(\Gamma)$ depends on $q$, the image $W_d$ of $\Gamma^d$ in $\Pic^d(\Gamma)$ does not.

\begin{cor}[Tropical Riemann's Theorem](cf.\ \cite[Corollary 8.6]{MZjacobians})
\label{cor:Riemman's Theorem}
There exists a unique  $\mu\in \Pic^{g-1}(\Gamma)$ such that
\[
[W_{g-1}]=\mu+[\Theta] \ ,
\]
where we consider $[\Theta]$ as a tropical cycle in $\Pic^0(\Gamma)$.
\end{cor}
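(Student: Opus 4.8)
The plan is to combine the Poincaré formula in the codimension-one case $d=g-1$ with Proposition~\ref{prop:effective divisors with positive non-degenerate Chern class}, which is tailor-made for comparing two effective divisors whose cycle classes agree and are Poincaré dual to a positive-definite perfect pairing. First I would fix a base point $q$ and use the Abel--Jacobi identification $\Pic^{g-1}(\Gamma)\cong\Jac(\Gamma)$ to transport $W_{g-1}$ to $\tW_{g-1}$; by Proposition~\ref{prop:i! times W_i} the latter has a fundamental cycle $[\tW_{g-1}]$, and since $\Jac(\Gamma)$ is a tropical manifold the isomorphism $\CDiv(\Jac(\Gamma))\cong Z_{g-1}(\Jac(\Gamma))$ lets me regard both $\tW_{g-1}$ and the theta divisor $\Theta$ as \emph{effective} Cartier divisors on the single torus $\Jac(\Gamma)$.

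Next I would invoke Theorem~\ref{thm:Poincare formula} at $d=g-1$, where $g-d=1$, to obtain $[\tW_{g-1}]\sim_{\mathrm{hom}}[\Theta]$, that is $\cyc[\tW_{g-1}]=\cyc[\Theta]$. As recorded in \S\ref{subsec:Theta divisor}, this common class is Poincaré dual to $c_1(\mL(\Theta))=Q$, and $Q$ restricts to a perfect pairing $H_1(\Gamma;\Z)\times\Omega_\Z(\Gamma)^*\to\Z$ with $Q_\R$ positive definite and symmetric. Thus all hypotheses of Proposition~\ref{prop:effective divisors with positive non-degenerate Chern class} are met for the two effective divisors $\Theta$ and $\tW_{g-1}$, and I obtain a \emph{unique} $x\in\Jac(\Gamma)$ with $t_x^*\Theta=\tW_{g-1}$.

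Finally, I would unwind the identifications. The equality $t_x^*\Theta=\tW_{g-1}$, i.e.\ $(t_{-x})_*\Theta=\tW_{g-1}$, transported back through the Abel--Jacobi isomorphisms, exhibits $W_{g-1}$ as the translate of $\Theta$ by a point $\mu\in\Pic^{g-1}(\Gamma)$ under the torsor action of $\Pic^0(\Gamma)\cong\Jac(\Gamma)$ on $\Pic^{g-1}(\Gamma)$; this is exactly the assertion $[W_{g-1}]=\mu+[\Theta]$. Uniqueness of $\mu$ follows from uniqueness of $x$, and since both $W_{g-1}$ and $\Theta$ (viewed in $\Pic^0(\Gamma)$) are independent of the choice of $q$, so is $\mu$.

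The substantive input --- the Poincaré formula together with the rigidity statement of Proposition~\ref{prop:effective divisors with positive non-degenerate Chern class} --- is already in hand, so I expect the main obstacle to be purely organizational: keeping the torsor structures straight so that the translation by $x\in\Jac(\Gamma)$ matches the torsor action of $\Pic^0(\Gamma)$ on $\Pic^{g-1}(\Gamma)$, and checking that the base point $q$ used to set up the comparison drops out of the final $\mu$.
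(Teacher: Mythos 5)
Your proposal is correct and follows essentially the same route as the paper's proof: apply Theorem~\ref{thm:Poincare formula} with $d=g-1$ to get $\cyc[\tW_{g-1}]=\cyc[\Theta]$, note that this class is Poincar\'e dual to the perfect, positive definite pairing $Q$, and then invoke Proposition~\ref{prop:effective divisors with positive non-degenerate Chern class} to produce the unique translation. The only (immaterial) difference is the direction of the translation ($t_x^*\Theta=\tW_{g-1}$ versus $t_\mu^*\tW_{g-1}=\Theta$) and your extra remark on base-point independence, which the paper handles in the discussion preceding the corollary.
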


\begin{proof}
It suffices to show that there exists a unique $\mu\in \Jac(\Gamma)$ such that $[\tW_{g-1}]=(t_{\mu})_*[\Theta]$ when considering $\Theta$ as a divisor on $\Jac(\Gamma)$. Since $[\tW_{g-1}]$ is a codimension-$1$ tropical cycles on the tropical manifold $\Jac(\Gamma)$, we can view $\tW_{g-1}$ as a tropical Cartier divisor as well (see \S\ref{subsec:tropical cycles})).  Applying the Poincar\'e formula (Theorem \ref{thm:Poincare formula}) with $d=g-1$ yields
\[
\cyc[\tW_{g-1}]=\cyc[\Theta] \ .
\]
By definition of $\Theta$, the cycle class $\cyc[\Theta]$ is Poincar\'e dual to the element in $H^{1,1}(\Jac(\Gamma))$ corresponding to the linear form $Q$. 
As $Q$ restricts to a perfect pairing $H_1(\Gamma;\Z)\times \Omega_\Z(\Gamma)^*\to \Z$, Proposition \ref{prop:effective divisors with positive non-degenerate Chern class} applies and there a unique $\mu\in \Jac(\Gamma)$ such that $t_{\mu}^*\tW_{g-1}=\Theta$. This is, of course, equivalent to the equality $(t_{\mu})_*[\tW_{g-1}]=[\Theta]$.
\end{proof}

\begin{cor}
\label{cor:degree of tWd times tWg-d}
For every $0\leq d\leq g$, we have
\[
\int_{\Jac(\Gamma)}[\tW_d]\cdot [\tW_{g-d}] = {g\choose d} \ .
\]
\end{cor}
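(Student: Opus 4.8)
The plan is to reduce the statement to a linear-algebra computation in the explicit coordinates on $H_{*,*}(\Jac(\Gamma))$ fixed in \S\ref{subsec:generators for homology}. First I would pass from the intersection $0$-cycle to its degree via its cycle class. Since the degree of a tropical $0$-cycle agrees with the degree of its cycle class, we have
\[
\int_{\Jac(\Gamma)}[\tW_d]\cdot[\tW_{g-d}] = \int_{\Jac(\Gamma)}\cyc\big([\tW_d]\cdot[\tW_{g-d}]\big).
\]
Because $\Jac(\Gamma)$ admits a spanning curve, Theorem \ref{thm:cycle map commutes with intersection product} applies and rewrites the right-hand side as $\int_{\Jac(\Gamma)}\cyc[\tW_d]\cdot\cyc[\tW_{g-d}]$, the degree of the intersection product of the two homology classes.

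Next I would substitute the explicit expressions from Lemma \ref{lem:formula for homology class of Wi},
\[
\cyc[\tW_d]=\sum_{|I|=d}\bigwedge_{k\in I}c_k\otimes\bigwedge_{k\in I}\vf{k},\qquad \cyc[\tW_{g-d}]=\sum_{|J|=g-d}\bigwedge_{k\in J}c_k\otimes\bigwedge_{k\in J}\vf{k},
\]
and evaluate the intersection product through Poincar\'e duality. Applying Lemma \ref{lem:Poincare duality for some monomial classes} to the complementary set $\{1,\dots,g\}\setminus I$ shows that the cohomology class $\sum_{|I|=d}\bigwedge_{k\notin I}c_k^*\otimes\bigwedge_{k\notin I}\vf{k}^*$ is Poincar\'e dual to $\cyc[\tW_d]$. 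By the definition of the intersection product of homology classes, $\cyc[\tW_d]\cdot\cyc[\tW_{g-d}]$ is then the cap product of this cohomology class with $\cyc[\tW_{g-d}]$, which I would compute term by term using the cap-product formula \eqref{equ:identity for cap product on real tori}.

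Finally, by \eqref{equ:identity for cap product on real tori} each term
\[
\Big(\bigwedge_{k\notin I}c_k^*\otimes\bigwedge_{k\notin I}\vf{k}^*\Big)\frown\Big(\bigwedge_{k\in J}c_k\otimes\bigwedge_{k\in J}\vf{k}\Big)
\]
factors as the product of a full contraction in $\bigwedge H_1(\Gamma;\Z)$ and one in $\bigwedge\Omega_\Z(\Gamma)^*$. Since $(c_k^*)_k$ and $(\vf{k}^*)_k$ are dual to $(c_k)_k$ and $(\vf{k})_k$, and the two contracted index sets $\{1,\dots,g\}\setminus I$ and $J$ both have size $g-d$, each such contraction vanishes unless $\{1,\dots,g\}\setminus I=J$, in which case it equals $\pm1$. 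The surviving diagonal terms are indexed by the sets $J$ of size $g-d$, of which there are $\binom{g}{g-d}=\binom{g}{d}$, and each contributes the generator of $H_{0,0}(\Jac(\Gamma))$, whose degree is $1$. Summing yields the claimed value $\binom{g}{d}$.

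The only point requiring genuine care is the sign bookkeeping in the interior products: one must check that the contraction of the $c$-part and of the $\vf{}$-part are governed by the same permutation, so that the two signs coincide and their product is unambiguously $+1$. Once this is verified the count is immediate, and I expect this sign cancellation to be the sole subtlety of the argument; everything else is a direct substitution into the formulas established in \S\ref{subsec:cycle classes of tautological cycles} and \S\ref{sec:homology of Abelian varieties}.
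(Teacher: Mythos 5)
Your proposal is correct, but it takes a genuinely different route from the paper. The paper's proof is a three-line application of the Poincar\'e formula (Theorem \ref{thm:Poincare formula}) itself: modulo homological equivalence one has
\[
[\tW_d]\cdot [\tW_{g-d}] = \frac{[\Theta]^{g-d}}{(g-d)!}\cdot\frac{[\Theta]^{d}}{d!} = \frac{[\Theta]^{g}}{d!\,(g-d)!} = \binom{g}{d}[\tW_0] \ ,
\]
and taking degrees (using that homological equivalence is compatible with intersection products and degrees, via Theorem \ref{thm:cycle map commutes with intersection product} and Proposition \ref{prop:homological equivalence implies numerical equivalence}) gives the result. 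You instead bypass $\Theta$ entirely and compute $\cyc[\tW_d]\cdot\cyc[\tW_{g-d}]$ directly in the monomial basis, using Lemma \ref{lem:formula for homology class of Wi} for both factors, Lemma \ref{lem:Poincare duality for some monomial classes} to identify the Poincar\'e dual of $\cyc[\tW_d]$, and the cap-product formula \eqref{equ:identity for cap product on real tori} to reduce to a count of complementary index sets. This is essentially the same contraction computation the paper carries out once, in the proof of Lemma \ref{lem:Poincare duality for some monomial classes}, and your identification of the sign bookkeeping as the only delicate point is exactly right: the two interior products (in $\bigwedge H_1(\Gamma;\Z)$ and in $\bigwedge\Omega_\Z(\Gamma)^*$) are governed by the same permutation when the index sets are ordered consistently, so the two signs coincide and each diagonal term contributes $+1$ --- this is the same observation the paper makes in that lemma's proof. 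What your route buys is independence from the theta divisor and from Theorem \ref{thm:Poincare formula}; what it costs is redoing by hand the duality computation the paper has already packaged into Lemma \ref{lem:Poincare duality for some monomial classes}. Both arguments are valid and rest on the same foundational inputs (the spanning-curve hypothesis for Theorem \ref{thm:cycle map commutes with intersection product}, and the explicit coordinates on $H_{*,*}(\Jac(\Gamma))$).
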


\begin{rem}
In the special case $d=1$ we recover the formula
\[
\int_{\Jac(\Gamma)}[\tW_1]\cdot [\Theta]= g 
\]
stated in \cite[Theorem 6.5]{MZjacobians}. Also note that the intersection product $[\tW_d]\cdot [\tW_{g-d}]$ is effective since one can locally apply the fan displacement rule.
\end{rem}

\begin{proof}
We apply Poincar\'e formula (Theorem \ref{thm:Poincare formula}) three times, and obtain a chain of equalities
\[
[\tW_d]\cdot [\tW_{g-d}] = \frac{[\Theta]^g}{d!(g-d)!}= {g\choose d} [\tW_0]
\]
that hold modulo homological equivalence. Taking the degree yields the result.
\end{proof}

\begin{cor}
\label{cor:Theta to the g}
We have
\[
\int_{\Jac(\Gamma)} [\Theta]^g = g! \ .
\]
\end{cor}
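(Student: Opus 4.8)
The plan is to obtain this as the extreme case $d=0$ of the tropical Poincaré formula. Specializing Theorem \ref{thm:Poincare formula} to $d = 0$ yields the homological equivalence
\[
g!\,[\tW_0] \sim_{\mathrm{hom}} [\Theta]^g \ ,
\]
so the statement reduces to showing that the zero-cycle $[\tW_0]$ has degree one.

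To compute this degree I would unwind the definition $\tW_0 = \AJ_q^0(\Gamma^0)$. Since $\Gamma^0$ is a single point, its image under the Abel--Jacobi map is the single point $0 \in \Jac(\Gamma)$, and the fundamental cycle $[\tW_0]$ is the weight-one zero-cycle supported there. Hence $\int_{\Jac(\Gamma)} [\tW_0] = 1$.

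Finally I would pass to degrees in the displayed homological equivalence. Using that the degree of a tropical cycle coincides with the degree of its cycle class (recorded just after Theorem \ref{thm:cycle class map commutes with operations}), homologically equivalent cycles have equal degree; combining this with the previous step gives $\int_{\Jac(\Gamma)} [\Theta]^g = g!$. I expect no real obstacle in this argument: the only point meriting a moment of care is the identification of $\tW_0$ with the origin and the verification that its fundamental cycle has degree one, both of which are immediate from the definitions. (Alternatively, one could read the degree off directly from Lemma \ref{lem:homology class of power of Theta} with $d=0$, whose right-hand side is $g!$ times the class of a point, but the route through Theorem \ref{thm:Poincare formula} is more in keeping with the preceding corollaries.)
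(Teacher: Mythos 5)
Your proposal is correct and matches the paper's own proof, which likewise specializes the Poincar\'e formula to $d=0$ and uses $\int_{\Jac(\Gamma)}[\tW_0]=1$. The extra care you take in justifying that $[\tW_0]$ is the weight-one cycle at the origin and that homologically equivalent $0$-cycles have equal degree is implicit in the paper's one-line computation.
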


\begin{proof}
By the tropical Poincar\'e formula (Theorem \ref{thm:Poincare formula}), we have
\[
\int_{\Jac(\Gamma)}[\Theta]^g= g! \int_{\Jac(\Gamma)} [\tW_0]= g! \ .
\]
\end{proof}

\begin{rem}
Classically, the statement of Corollary \ref{cor:Theta to the g} also follows from the geometric Riemann-Roch theorem for Abelian varieties \cite[Theorem 3.6.3]{bila}. Tropically, it is also possible to prove the statement using the duality of Voronoi and Delaunay decompositions.
\end{rem}

\begin{bibdiv}
\begin{biblist}

\bib{ACGH}{book}{
      author={Arbarello, E.},
      author={Cornalba, M.},
      author={Griffiths, P.~A.},
      author={Harris, J.},
       title={Geometry of algebraic curves. {V}ol. {I}},
      series={Grundlehren der Mathematischen Wissenschaften},
   publisher={Springer-Verlag, New York},
        date={1985},
      volume={267},
        ISBN={0-387-90997-4},
  url={https://doi-org.proxy.library.cornell.edu/10.1007/978-1-4757-5323-3},
      review={\MR{770932}},
}

\bib{AllerRau}{article}{
      author={Allermann, Lars},
      author={Rau, Johannes},
       title={First steps in tropical intersection theory},
        date={2010},
        ISSN={0025-5874},
     journal={Math. Z.},
      volume={264},
      number={3},
       pages={633\ndash 670},
  url={https://doi-org.proxy.library.cornell.edu/10.1007/s00209-009-0483-1},
      review={\MR{2591823}},
}

\bib{Beauville}{article}{
      author={Beauville, Arnaud},
       title={Algebraic cycles on {J}acobian varieties},
        date={2004},
        ISSN={0010-437X},
     journal={Compos. Math.},
      volume={140},
      number={3},
       pages={683\ndash 688},
  url={https://doi-org.proxy.library.cornell.edu/10.1112/S0010437X03000733},
      review={\MR{2041776}},
}

\bib{bila}{book}{
      author={Birkenhake, Christina},
      author={Lange, Herbert},
       title={Complex abelian varieties},
     edition={Second},
      series={Grundlehren der Mathematischen Wissenschaften},
   publisher={Springer-Verlag, Berlin},
        date={2004},
      volume={302},
        ISBN={3-540-20488-1},
  url={https://doi-org.proxy.library.cornell.edu/10.1007/978-3-662-06307-1},
      review={\MR{2062673}},
}

\bib{BakerNorine}{article}{
      author={Baker, Matthew},
      author={Norine, Serguei},
       title={Riemann-{R}och and {A}bel-{J}acobi theory on a finite graph},
        date={2007},
        ISSN={0001-8708},
     journal={Adv. Math.},
      volume={215},
      number={2},
       pages={766\ndash 788},
         url={https://doi.org/10.1016/j.aim.2007.04.012},
      review={\MR{2355607}},
}

\bib{CDPR}{article}{
      author={Cools, Filip},
      author={Draisma, Jan},
      author={Payne, Sam},
      author={Robeva, Elina},
       title={A tropical proof of the {B}rill-{N}oether theorem},
        date={2012},
        ISSN={0001-8708},
     journal={Adv. Math.},
      volume={230},
      number={2},
       pages={759\ndash 776},
         url={http://dx.doi.org/10.1016/j.aim.2012.02.019},
      review={\MR{2914965}},
}

\bib{Ceresa}{article}{
      author={Ceresa, G.},
       title={{$C$} is not algebraically equivalent to {$C^{-}$} in its
  {J}acobian},
        date={1983},
        ISSN={0003-486X},
     journal={Ann. of Math. (2)},
      volume={117},
      number={2},
       pages={285\ndash 291},
         url={https://doi-org.proxy.library.cornell.edu/10.2307/2007078},
      review={\MR{690847}},
}

\bib{Eisenbud}{book}{
      author={Eisenbud, David},
       title={Commutative algebra},
      series={Graduate Texts in Mathematics},
   publisher={Springer-Verlag, New York},
        date={1995},
      volume={150},
        ISBN={0-387-94268-8; 0-387-94269-6},
  url={https://doi-org.proxy.library.cornell.edu/10.1007/978-1-4612-5350-1},
        note={With a view toward algebraic geometry},
      review={\MR{1322960}},
}

\bib{FRIntersection}{article}{
      author={Fran\c{c}ois, Georges},
      author={Rau, Johannes},
       title={The diagonal of tropical matroid varieties and cycle
  intersections},
        date={2013},
        ISSN={0010-0757},
     journal={Collect. Math.},
      volume={64},
      number={2},
       pages={185\ndash 210},
  url={https://doi-org.proxy.library.cornell.edu/10.1007/s13348-012-0072-1},
      review={\MR{3041763}},
}

\bib{TropicalCocycles}{article}{
      author={Fran\c{c}ois, Georges},
       title={Cocycles on tropical varieties via piecewise polynomials},
        date={2013},
        ISSN={0002-9939},
     journal={Proc. Amer. Math. Soc.},
      volume={141},
      number={2},
       pages={481\ndash 497},
         url={https://doi.org/10.1090/S0002-9939-2012-11359-0},
      review={\MR{2996952}},
}

\bib{FRSS}{article}{
      author={Foster, Tyler},
      author={Rabinoff, Joseph},
      author={Shokrieh, Farbod},
      author={Soto, Alejandro},
       title={Non-{A}rchimedean and tropical theta functions},
        date={2018},
        ISSN={0025-5831},
     journal={Math. Ann.},
      volume={372},
      number={3-4},
       pages={891\ndash 914},
         url={https://doi.org/10.1007/s00208-018-1646-3},
      review={\MR{3880286}},
}

\bib{GH78}{book}{
      author={Griffiths, Phillip},
      author={Harris, Joseph},
       title={Principles of algebraic geometry},
      series={Wiley Classics Library},
   publisher={John Wiley \& Sons, Inc., New York},
        date={1994},
        ISBN={0-471-05059-8},
         url={https://doi-org.proxy.library.cornell.edu/10.1002/9781118032527},
        note={Reprint of the 1978 original},
      review={\MR{1288523}},
}

\bib{GathmannKerber}{article}{
      author={Gathmann, Andreas},
      author={Kerber, Michael},
       title={A {R}iemann-{R}och theorem in tropical geometry},
        date={2008},
        ISSN={0025-5874},
     journal={Math. Z.},
      volume={259},
      number={1},
       pages={217\ndash 230},
         url={https://doi.org/10.1007/s00209-007-0222-4},
      review={\MR{2377750}},
}

\bib{AF1}{unpublished}{
      author={Gross, Andreas},
      author={Shokrieh, Farbod},
       title={A sheaf-theoretic approach to tropical homology},
        date={2019},
         url={https://arxiv.org/abs/1906.09245},
        note={Preprint available at
  \href{https://arxiv.org/abs/1906.09245}{{\tt ar{X}iv:1906.09245}}},
}

\bib{semibreak}{unpublished}{
      author={Gross, Andreas},
      author={Shokrieh, Farbod},
      author={T{\'o}thm{\'e}r{\'e}sz, Lilla},
       title={Effective divisor classes on metric graphs},
        date={2018},
         url={https://arxiv.org/abs/1807.00843},
        note={Preprint available at
  \href{https://arxiv.org/abs/1807.00843}{{\tt ar{X}iv:1807.00843}}},
}

\bib{TropHomology}{article}{
      author={Itenberg, Ilia},
      author={Katzarkov, Ludmil},
      author={Mikhalkin, Grigory},
      author={Zharkov, Ilia},
       title={Tropical {H}omology},
        date={2019},
        ISSN={0025-5831},
     journal={Math. Ann.},
      volume={374},
      number={1-2},
       pages={963\ndash 1006},
  url={https://doi-org.proxy.library.cornell.edu/10.1007/s00208-018-1685-9},
      review={\MR{3961331}},
}

\bib{Lefschetz}{article}{
      author={Jell, Philipp},
      author={Rau, Johannes},
      author={Shaw, Kristin},
       title={Lefschetz {$(1,1)$}-theorem in tropical geometry},
        date={2018},
        ISSN={2491-6765},
     journal={\'{E}pijournal Geom. Alg\'{e}brique},
      volume={2},
       pages={Art. 11, 27},
      review={\MR{3894860}},
}

\bib{Kleiman}{incollection}{
      author={Kleiman, S.~L.},
       title={Algebraic cycles and the {W}eil conjectures},
        date={1968},
   booktitle={Dix expos\'{e}s sur la cohomologie des sch\'{e}mas},
      series={Adv. Stud. Pure Math.},
      volume={3},
   publisher={North-Holland, Amsterdam},
       pages={359\ndash 386},
      review={\MR{292838}},
}

\bib{jinvariant}{article}{
      author={Katz, Eric},
      author={Markwig, Hannah},
      author={Markwig, Thomas},
       title={The {$j$}-invariant of a plane tropical cubic},
        date={2008},
        ISSN={0021-8693},
     journal={J. Algebra},
      volume={320},
      number={10},
       pages={3832\ndash 3848},
         url={https://doi.org/10.1016/j.jalgebra.2008.08.018},
      review={\MR{2457725}},
}

\bib{Lieberman}{article}{
      author={Lieberman, David~I.},
       title={Numerical and homological equivalence of algebraic cycles on
  {H}odge manifolds},
        date={1968},
        ISSN={0002-9327},
     journal={Amer. J. Math.},
      volume={90},
       pages={366\ndash 374},
         url={https://doi.org/10.2307/2373533},
      review={\MR{230336}},
}

\bib{LPN}{article}{
      author={Lim, Chang~Mou},
      author={Payne, Sam},
      author={Potashnik, Natasha},
       title={A note on {B}rill-{N}oether theory and rank-determining sets for
  metric graphs},
        date={2012},
        ISSN={1073-7928},
     journal={Int. Math. Res. Not. IMRN},
      number={23},
       pages={5484\ndash 5504},
  url={https://doi-org.offcampus.lib.washington.edu/10.1093/imrn/rnr233},
      review={\MR{2999150}},
}

\bib{Marini}{article}{
      author={Marini, Giambattista},
       title={Tautological cycles on {J}acobian varieties},
        date={2008},
        ISSN={0010-0757},
     journal={Collect. Math.},
      volume={59},
      number={2},
       pages={167\ndash 190},
         url={https://doi-org.proxy.library.cornell.edu/10.1007/BF03191366},
      review={\MR{2414143}},
}

\bib{Mattuck}{article}{
      author={Mattuck, Arthur},
       title={On symmetric products of curves},
        date={1962},
        ISSN={0002-9939},
     journal={Proc. Amer. Math. Soc.},
      volume={13},
       pages={82\ndash 87},
         url={https://doi-org.proxy.library.cornell.edu/10.2307/2033777},
      review={\MR{0136608}},
}

\bib{Moonen}{article}{
      author={Moonen, Ben},
       title={Relations between tautological cycles on {J}acobians},
        date={2009},
        ISSN={0010-2571},
     journal={Comment. Math. Helv.},
      volume={84},
      number={3},
       pages={471\ndash 502},
         url={https://doi-org.proxy.library.cornell.edu/10.4171/CMH/170},
      review={\MR{2507251}},
}

\bib{Mumford08}{book}{
      author={Mumford, David},
       title={Abelian varieties},
      series={Tata Institute of Fundamental Research Studies in Mathematics},
   publisher={Hindustan Book Agency, New Delhi},
        date={2008},
      volume={5},
        ISBN={978-81-85931-86-9; 81-85931-86-0},
        note={With appendices by C. P. Ramanujam and Yuri Manin, Corrected
  reprint of the second (1974) edition},
      review={\MR{2514037}},
}

\bib{MZjacobians}{incollection}{
      author={Mikhalkin, Grigory},
      author={Zharkov, Ilia},
       title={Tropical curves, their {J}acobians and theta functions},
        date={2008},
   booktitle={Curves and abelian varieties},
      series={Contemp. Math.},
      volume={465},
   publisher={Amer. Math. Soc., Providence, RI},
       pages={203\ndash 230},
  url={https://doi-org.proxy.library.cornell.edu/10.1090/conm/465/09104},
      review={\MR{2457739}},
}

\bib{MZeigenwave}{incollection}{
      author={Mikhalkin, Grigory},
      author={Zharkov, Ilia},
       title={Tropical eigenwave and intermediate {J}acobians},
        date={2014},
   booktitle={Homological mirror symmetry and tropical geometry},
      series={Lect. Notes Unione Mat. Ital.},
      volume={15},
   publisher={Springer, Cham},
       pages={309\ndash 349},
  url={https://doi-org.proxy.library.cornell.edu/10.1007/978-3-319-06514-4_7},
      review={\MR{3330789}},
}

\bib{Pflueger}{article}{
      author={Pflueger, Nathan},
       title={Special divisors on marked chains of cycles},
        date={2017},
        ISSN={0097-3165},
     journal={J. Combin. Theory Ser. A},
      volume={150},
       pages={182\ndash 207},
  url={https://doi-org.proxy.library.cornell.edu/10.1016/j.jcta.2017.03.001},
      review={\MR{3645573}},
}

\bib{Polishchuk}{article}{
      author={Polishchuk, A.},
       title={Universal algebraic equivalences between tautological cycles on
  {J}acobians of curves},
        date={2005},
        ISSN={0025-5874},
     journal={Math. Z.},
      volume={251},
      number={4},
       pages={875\ndash 897},
  url={https://doi-org.proxy.library.cornell.edu/10.1007/s00209-005-0838-1},
      review={\MR{2190148}},
}

\bib{ShawIntersection}{article}{
      author={Shaw, Kristin~M.},
       title={A tropical intersection product in matroidal fans},
        date={2013},
        ISSN={0895-4801},
     journal={SIAM J. Discrete Math.},
      volume={27},
      number={1},
       pages={459\ndash 491},
         url={https://doi-org.proxy.library.cornell.edu/10.1137/110850141},
      review={\MR{3032930}},
}

\bib{ZharkovMinusC}{article}{
      author={Zharkov, Ilia},
       title={{$C$} is not equivalent to {$C^-$} in its {J}acobian: a tropical
  point of view},
        date={2015},
        ISSN={1073-7928},
     journal={Int. Math. Res. Not. IMRN},
      number={3},
       pages={817\ndash 829},
         url={https://doi-org.proxy.library.cornell.edu/10.1093/imrn/rnt222},
      review={\MR{3340338}},
}

\end{biblist}
\end{bibdiv}


\end{document}